\theoremstyle{plain}
\newtheorem{theorem}{Theorem}[section]
\newtheorem{lemma}[theorem]{Lemma}
\newtheorem{prop}[theorem]{Proposition}
\newtheorem{coro}[theorem]{Corollary}
\theoremstyle{remark}
\newtheorem{example}{Example}
\newtheorem{pseudocode}{Pseudocode}
\newtheorem{remark}[theorem]{Remark}
\newtheorem{obs}[theorem]{Observation}
\newtheorem*{assumption}{Assumption}
\newtheorem*{condition}{Condition}
\begin{document}
\begin{frontmatter}
\title{k-Sample inference \\ via Multimarginal Optimal Transport}
\runtitle{k-Sample inference via Multimarginal Optimal Transport}

\begin{aug}
\author[A]{\fnms{Natalia}~\snm{Kravtsova}\ead[label=e1]{kravtsova.2@osu.edu}}
\address[A]{Department of Mathematics,
The Ohio State University\printead[presep={,\ }]{e1}}

\end{aug}

\begin{abstract}
This paper proposes a Multimarginal Optimal Transport ($MOT$) approach for simultaneously comparing $k\geq 2$ measures supported on  finite subsets of $\mathbb{R}^d$, $d \geq 1$. We derive asymptotic distributions of the optimal value of the empirical $MOT$ program under the null hypothesis that all $k$ measures are same, and the alternative hypothesis that at least two measures are different. We use these results to construct the test of the null hypothesis and provide consistency and power guarantees of this $k$-sample test. We consistently estimate asymptotic distributions using bootstrap, and propose a low complexity linear program to approximate the test cut-off. We demonstrate the advantages of our approach on synthetic and real datasets, including the real data on cancers in the United States in 2004 - 2020.
\end{abstract}

\begin{keyword}[class=MSC]
\kwd[Primary ]{62G10}
\kwd{90C31}
\kwd[; secondary ]{60F05}
\end{keyword}

\begin{keyword}
\kwd{Multimarginal Optimal Transport}
\kwd{k-Sample test}
\end{keyword}

\end{frontmatter}
\tableofcontents

\section{Introduction}
The $k$-sample inference concerns with simultaneously comparing several probability measures. The classical question of this inference is to determine whether $k\geq 2$ groups of observed data points have the same underlying probability distribution, i.e.  to test 

\begin{equation}\label{eq:hypothesis}
	\begin{aligned}
		& H_0: \mu_1 = \cdots = \mu_k \\
		& H_a: \mu_i \neq \mu_j \text{ for some } 1 \leq i < j \leq k
	\end{aligned}
\end{equation}
This testing problem has a long history in statistics, with classical rank-based tests for univariate data \cite{Conover,Kiefer,Scholz,Zhang} to recent extension \cite{Deb} using multivariate ranks \cite{Chernozhukov,Hallin_ranks,HallinHlubinka}, to graph \cite{Mukhopadhyay}, distance \cite{Rizzo} and kernel \cite{Sejdinovic,Kim} based methods. Direct applications of testing hypotheses in \eqref{eq:hypothesis} include simultaneously comparing gene expression profiles to assess presence of disease \cite{Zhan}, assessing differences in chronic disease levels based on quality of life  \cite{Chen_chronic}, analyzing associations between exercise and morphology of an animal \cite{Zhang_mito}, and comparing distributions of agents' outcomes in reinforcement learning \cite{Park}. Moreover, the test of \eqref{eq:hypothesis} is frequently viewed as a non-parametric version of ANOVA \cite{ChenPokojovy,Rizzo} with myriad of scientific applications, typically comparing treatment outcomes between multiple groups. e.g. in clinical trials \cite{Cleophas} and cancer studies \cite{Heitjan, ZhangANOVA}. Table \ref{table:finite_sup_ex} outlines additional instances of scientific applications for $k$-sample inference when measures of interest have finite support, which is the case considered in this paper. 

This paper proposes an Optimal Transport approach to $k$-sample inference for $k\geq 2$ probability measures with finite supports in $\mathbb{R}^d$, $d \geq 1$. The method provides a powerful $k$-sample test of \eqref{eq:hypothesis}, but also allows comparison between different collections of $k$ measures in terms of their within-collection variability. Optimal Transport based approach has been shown successful in  one-sample (goodness-of-fit) and two-sample problems on finite \cite{Bigot,Klatt,Sommerfeld}, countable \cite{Tameling}, semidiscrete \cite{Hallin}, and some of the continuous spaces \cite{MunkCzado, Hundrieser}. The test statistics employ $p$-Wasserstein distances $W_p$ (or their regularized variants) to quantify differences between measures of interest while respecting metric structure of their supports \cite{Panaretos}. 

Our test statistic employs a different functional - the  Multimarginal Optimal Transport program $MOT$ \cite{Pass} - which can be represented as a variance functional on the space of measures \cite{CarlierMerigot} and thus serves as a natural candidate for testing variability in a collection of $k$ measures. We demonstrate that despite well-documented differences in solution structures of $MOT$ and $W_p$ problems (described, for example, in Sec. 1.7.4 of \cite{santambrogio}, or in \cite{Gerolin2019}), $MOT$ shares the same benefits as $W_p$ when it comes to the limiting behavior of its optimal value. 

Using $MOT$ for $k$-sample inference brings several important advantages. The main advantage is that the asymptotic distributions of $MOT$ can be derived under both $H_0$ and $H_a$. To the best of our knowledge, the only multivariate $k$-sample test statistic with known $H_a$ distribution is the one of \cite{Huskova}, where the limit laws are known only for a specific subset of alternatives. Our laws cover all alternatives in \eqref{eq:hypothesis}, which allows to explicitly derive a power function of the test and establish novel consistency results. The consistency analysis techniques developed in this paper can be further applied to one- and two-sample tests based on asymptotic results in \cite{Hundrieser, Sommerfeld,Tameling}.

Another benefit of $MOT$ limit under $H_a$ is the ability to estimate functionals of the $H_a$ distribution, e.g. Confidence Regions for the $MOT$ value. This allows for a novel application of comparing several collections of $k$ measures using overlap between their Confidence Regions (see Figure \ref{fig:year} for a concrete example). The procedure can be viewed as a distributional analogue of multiple comparisons (see \cite{Hsu} for a review), which are performed in a space of measures rather than Euclidean space. To the best of our knowledge, this type of analysis is not available with other $k$-sample statistics considered in the literature.

Conceptually, our approach to $k$-sample inference is equivalent to viewing $k$ measures as a collection of $k$ points and assessing variability within such collection. This approach lies in a general framework of Optimal Transport based distribution comparison: distributions are viewed as points in a Wasserstein space with Wasserstein distance indicating their closeness \cite{Panaretos}, and comparison is performed based on this distance (or its variants). This comparison framework has led to recent development of distributional analogues of traditional data analysis methods such as regression and time series \cite{ZhangTimeSeries, ZhuDist, Ghodrati}, synthetic controls \cite{GunsiliusDSC}, clustering \cite{TrillosJacobs}, and functional ANOVA \cite{Zemel_Gaus_proc}\footnote{Furthermore, 
	matching abilities of the solutions to resulting Optimal Transport problems - the multimarginal problems in particular -  have been of interest to economic theory (including recent results in \cite{Arieli2023} and \cite{Echenique2024}) and causal inference \cite{Gunsilius}.\\
}. Here, we view $k$-sample inference as a part of this framework.  

\par{\textbf{Organization of the paper:}} The rest of the Introduction formulates our approach to $k$-sample inference using $MOT$ (Sec. \ref{sec:formulation}), briefly outlines existing results on relevant statistical properties of Optimal Transport (Sec.  \ref{sec:weak_limits}) and summarizes our contributions (Sec. \ref{sec:contributions}). 

Section \ref{sec:main_results} provides our main results defining $k$-sample statistical inference procedures based on $MOT$ (Sec. \ref{sec:prelim} and \ref{sec:procedures}), reporting asymptotic distributions of the relevant test statistics (Sec. \ref{sec:asym}), and discussing consistency and power of the resulting procedures (Sec. \ref{subsec:power}). 

Section \ref{sec:sampling} shows how the procedures can be implemented. One can use certain forms of bootstrap to sample from the asymptotic distributions, which would result in solving large linear programs (Sec. \ref{sec:boot}), or one can approximate them with lower complexity linear programs (Sec. \ref{sec:null_approximation}). Alternatively, permutation approach can be implemented (Sec. \ref{sec:perm}). 

Section \ref{sec:applications} implements proposed procedures for testing $H_0$ and constructing Confidence Regions under $H_a$ (with $H_0$ and $H_a$ as defined by equation (\ref{eq:hypothesis}) above). Empirical power for testing $H_0$ and example Confidence Regions under $H_a$ are reported on the newly constructed as well as previously published synthetic data (Sec. \ref{sec:illustration_synth}). Applications to real data are reported next (Sec. \ref{sec:application_real}).  

Section \ref{sec:conclusion} summarizes the results of the paper (Sec. \ref{sec:summary}) and discusses limitations and future directions (Sec. \ref{sec:limitations}).

Appendix \ref{appA} contains proofs omitted in the main text. Appendix \ref{appB} contains additional technical results and detailed description of the data used in applications.

\subsection{Multimarginal Optimal Transport (MOT) for k-sample inference}\label{sec:formulation}
Let $\mu_1,\hdots,\mu_k$ be Borel probability measures supported on $\mathcal{X}\subseteq \mathbb{R}^d$, $d\geq 1$. The Multimarginal Optimal Transport ($MOT$) problem (equation (4.3) of \cite{Agueh}) is the optimization problem
\begin{equation}\label{eq:mot_general}
	\inf_{\pi \in \mathcal{C}(\mu_1,\hdots,\mu_k)} \int_{\mathcal{X} \times \cdots \times \mathcal{X}} c(x_1,\hdots,x_k) \, d\pi(x_1,\hdots,x_k) 
\end{equation}
where $\mathcal{C}(\mu_1,\hdots,\mu_k)$ is the set of Borel probability measures on the product space $\mathcal{X} \times \cdots \times \mathcal{X}$ with marginals $\mu_1,\hdots,\mu_k$. Different choices for the cost function $c(x_1,\hdots,x_k)$ are possible; throughout the paper, we fix the choice to be
\begin{equation}\label{eq:cost}
	c(x_1,\hdots,x_k):=\frac{1}{k} \sum_{i=1}^k \|x_i - \frac{1}{k}\sum_{j=1}^k x_j \|^2
\end{equation}
Under this choice, the $MOT$ problem is equivalent (Proposition 3.1.2 of \cite{Panaretos}) to the Wasserstein barycenter problem (equation 2.2 of \cite{Agueh})
\begin{equation}\label{eq:bary_general}
	\inf_{\nu \in \mathcal{P}^2(\mathbb{R}^d)} \frac{1}{k} \sum_{i=1}^k W_2^2(\mu_i,\nu)
\end{equation}
where $W_2(\cdot,\cdot)$ denotes 2-Wasserstein distance \footnote{Recent results on $1<p<\infty$-Wasserstein distance and Hellinger-Kantorovich distance cases include \cite{Brizzi2025} and \cite{Buze2025}, respectively.}.   
By equivalence here we mean that the optimal values of both programs are equal, and the optimal solutions $\pi^*$ of \eqref{eq:mot_general} and $\nu^*$ of \eqref{eq:bary_general} are related by $\nu^* = M_\# \pi^*$ where $M$ is the map that averages a given $k$-tuple of points from the supports of the $k$ measures ($M\# \pi^*$ stands for pushforward of a measure $\pi^*$ by the map $M$).  We remark here that when the measures are discrete, the barycenter problem (\ref{eq:bary_general}) generally has more than one optimal solution $\nu^*$. This presents challenges for statistical inference concerning barycenter \textit{solutions} \cite{LeGouic}, but does not impede the analysis of the optimal \textit{value} of barycenter or $MOT$ problems (recalling that all optimal solutions result in the same optimal value).

Let $MOT(\mu_1,\hdots,\mu_k)$ denote the optimal value of the $MOT$ program \eqref{eq:mot_general}. Observe that $MOT(\mu_1,\hdots,\mu_k) = 0$ if and only if the $k$ measures $\mu_1,\hdots,\mu_k$ are all the same. Indeed, if $\nu$ is (any) optimal solution to the barycenter problem (\ref{eq:bary_general}), then $MOT(\mu_1,\hdots,\mu_k)=0$ is equivalent to zero optimal value in the barycenter program \eqref{eq:bary_general}, i.e. $W_2^2(\mu_i,\nu) = 0$ for all $i=1,\hdots,k$. Due to metric properties of $2$-Wasserstein distance (Theorem 7.3 of \cite{Villani_topics}), this is equivalent to all the $k$ measures being the same (and equal to $\nu$). 

This observation suggests that testing $H_0$ in \eqref{eq:hypothesis} can be addressed via testing for $MOT(\mu_1$ $,\hdots,\mu_k)=0$. To this end, suppose that the data on $k$ samples $(X^1_i)_{i=1}^{n_1}, \cdots, (X^k_i)_{i=1}^{n_k}$  of sizes $n_1,\cdots,n_k$, respectively, is available to estimate the underlying measures $\mu_1, \cdots, \mu_k$ by the empirical measures $\hat{\mu}_1:=\frac{1}{n_i}\sum_{i=1}^{n_1} \delta_{x^1_i}, \cdots, \hat{\mu}_k:=\frac{1}{n_k}\sum_{i=1}^{n_k} \delta_{x^k_i}$. To test for $MOT(\mu_1,\hdots,\mu_k) = 0$ based on the data, we consider the asymptotic distribution $\mathcal{D}_0$ of the empirical estimator $MOT(\widehat{\mu}_1,\hdots,\widehat{\mu}_k)$ under $H_0$ and reject $H_0$ when the estimator value is large. 

More generally, once the asymptotic distribution $\mathcal{D}$ of empirical $MOT$ is known, one can estimate various functionals of $\mathcal{D}$, such as 
Confidence Regions (CR's) for a true $MOT$ value either $H_0$ or $H_a$ in \eqref{eq:hypothesis}. Inference of this type requires knowledge of the asymptotic distribution of empirical version of the $MOT$ value in \eqref{eq:mot_general}. Our derivation of these distributions leverages rich literature on asymptotic theory for the Wasserstein distance, whose main results we briefly review below.   

\begin{table}
	\caption{Examples of scientific data on finitely supported measures. Sample reference describes the set up, the data, and/or the comparison problem in each case.}
	\begin{tabular}{@{}lcl@{}}
		\hline
		Variable(s) of interest
		&  Support of measures & Ref. \\
		\hline
		Age of patients (in years)  & $\{0,1,2.\hdots,100 \} \subset \mathbb{R}$ & \cite{Desai}\\
		Tumor size (in mm) & $\{1,2, \hdots, 150 \} \subset \mathbb{R}$  & \cite{Sopik}\\
		Number of positive lymph nodes & $\{1,2, \hdots, 7 \} \subset \mathbb{R}$  & \cite{Fukui}\\
		Joint distributions of the above variables & finite subsets of $\mathbb{R}^2$ or $\mathbb{R}^3$ & \cite{Giordano}\\
		Cell counts & $\{0,1,\hdots,M\}^d \subset \mathbb{R}^d$ for $d$ sites & \cite{Corchado}\\
		Demand over $N$ locations & $N$ points (longitude, latitude) $ \in \mathbb{R}^2$ & \cite{Anderes}\\
		Disease rates over $N$ locations& $N$ points  on the map in $\mathbb{R}^2$& \cite{Khan}\\
		Pixel/voxel intensity in microscopy images  & the grid in $\mathbb{R}^2$ or $\mathbb{R}^3$ & \cite{Uchida}\\
		\hline
	\end{tabular}
	\label{table:finite_sup_ex}
\end{table}

\subsection{Existing results on weak limits for Optimal Transport}\label{sec:weak_limits}
The squared 2-Wasserstein distance $W_2^2(\mu,\nu)$ is the optimal value of the problem
\begin{equation}\label{eq:wass_dist}
	\inf_{\pi \in \mathcal{C}(\mu,\nu)} \int_{\mathcal{X} \times \mathcal{X}} c(x_1,x_2) \, d\pi(x_1,x_2) 
\end{equation}
with $c(x_1,x_2) = \|x_1 - x_2 \|^2$, which can be viewed as a particular case of the $MOT$ problem \eqref{eq:mot_general} with $k=2$ measures. Being a true metric on a space of probability measures on a given metric space (\cite{Villani_oldnew}), the 2-Wasserstein distance $W_2$ (and, more generally, the $p$-Wasserstein distance $W_p$) provides a natural way to compare probability measures while respecting the geometry of the supporting metric space. Under this framework, the true measures are estimated by their empirical counterparts, and statistical inference is conducted using limiting laws for the empirical Wasserstein distance \cite{Panaretos_Review, Ramdas}. 

The forms of the weak limits depend on two main factors: dimensionality of the support and the nature of the measures (where the cases $\mu=\nu$ and $\mu \neq  \nu$ may have different limits). 
Letting $OT_c$ denote the optimal value in \eqref{eq:wass_dist} (with possibly different costs $c$), the limiting laws have general form of 
\begin{equation}\label{eq:weak_lim}
	\rho_n\left(OT_c(\widehat{\mu}_{n_1}, \widehat{\nu}_{n_2}) - OT_c(\mu,\nu)   \right) \xrightarrow[]{\text{in law}} L
\end{equation}
with $\rho_n=\sqrt{\frac{n_1n_2}{n_1+n_2}}$ (when only $\mu$ is estimated from the data while $\nu$ is not, the ``one-sample" version with $\rho_n=\sqrt{n}$ is considered). When measures are supported on $\mathbb{R}$, the limits $L$ can be Gaussian, with variance that depending on the truth $\mu,\nu$ under the ``alternative" assumption $\mu\neq\nu$ \cite{MunkCzado, delBarrio_alt}, and are non-Gaussian under the ``null" assumption $\mu=\nu$ \cite{delBarrio_alt,delBarrio_L2}. When measures are supported on $\mathbb{R}^d$, $d>1$, and are absolutely continuous, the curse of dimensionality takes place: the empirical Wasserstein distance converges in expectation to the true one too slowly \cite{Dudley_glivenkocantelli, Fournier}. It is still possible, however, to obtain convergence statement similar to \eqref{eq:weak_lim} in any dimension $d\geq 1$ by replacing the centering true value $OT_c(\mu,\nu)$ with expectation of the empirical value $\mathbb{E}\left(OT_c(\widehat{\mu}_{n_1}, \widehat{\nu}_{n_2}\right)$ \cite{delBarrioLoubes}. The limit $L$ is Gaussian when $\mu \neq \nu$ and is degenerate (i.e. limiting random variable has zero variance) when $\mu=\nu$\footnote{For recent review of results on centering with true value versus centering with expectation and the effect of regularization on the associated limits, see \cite{WeedCLT}.}.

Favorable situation arises when measures are supported on a finite space $\mathcal{X}=\{x_1,\hdots,x_N \}$ $\subseteq \mathbb{R}^d$: \cite{Sommerfeld} show that the limit law of the form \eqref{eq:weak_lim} hold for the $W_p$ distance in any dimension $d \geq 1$ and use resulting laws to construct statistical inference under $H_0$ and $H_a$ (the case of countable support is treated in \cite{Tameling}). The laws under either $H_0$ or $H_a$ are non-degenerate and given by 
\begin{equation}\label{eq:munk_limit}
	\rho_n \left( W_p^p( \widehat{\mu}_{n_1}, \widehat{\nu}_{n_2}) - W_p^p(\mu,\nu)\right) \xrightarrow[]{\text{in law}} \max_{(u_1,u_2)\in\Phi^*}\sqrt{\lambda}\langle  u_1, G_1 \rangle + \sqrt{1-\lambda}\langle  u_2, G_2 \rangle
\end{equation}
where $G_1,G_2$ are the weak limits of multinomial processes $\sqrt{n_1}(\widehat{\mu}_{n_1} - \mu)$ and $\sqrt{n_2}(\widehat{\nu}_{n_2} - \nu)$,
and $\Phi^*$ is set of optimal solutions to the dual of the $W_p^p$ program \eqref{eq:wass_dist}. The results are extended in \cite{Hundrieser} to general measures supported on $\mathbb{R}^d$, $d=1,2,3$ and general costs $c$ (with discussion of limitations in higher dimensions $d \geq 4$), thus providing a unified approach to weak limits of empirical OT costs centered by the true population value.     

The starting point for theoretical results of this paper is the weak limit \eqref{eq:munk_limit} \cite{Sommerfeld}. Inspired by this result, we establish the limits of the form \eqref{eq:weak_lim}, where $OT_c$ is now the optimal value of the $MOT$ program \eqref{eq:mot_general} with $k \geq 2$ measures supported on a finite space $\mathcal{X}=\{x_1,\hdots,x_N \} \subseteq \mathbb{R}^d$ for any $d \geq 1$. The implications of these results to $k$-sample inference and further theoretical findings related to our limits are summarized below. 

\subsection{Summary of  contributions and outline}\label{sec:contributions} 
\par \textit{Asymptotic distributions of $MOT$:} We provide asymptotic distribution of the optimal value of $MOT$ on finite spaces by establishing Hadamard directional differentiability of the $MOT$ functional and combining it with functional Delta method \cite{Carcamo,Fang,Hundrieser,Shapiro_asymptotic,Shapiro,Romisch, Sommerfeld,Tameling}. The resulting limit is a Hadamard directional derivative of $MOT$ at the true $\mu:=\left(\mu_1,\hdots,\mu_k \right)$ in the direction of the limit $G$ of the empirical process $\rho_n \left( \widehat{\mu}_n - \mu \right)$ for suitably defined rate $\rho_n$ (Theorem \ref{thm:as_dist}(a)). For $k=2$ measures, our limit recovers the one for the Wasserstein distance on finite spaces obtained in \cite{Sommerfeld}; for $k>2$ measures, our limit allows to construct novel inference procedures for the $k$-sample problem using $MOT$ (Section \ref{sec:procedures}). 

We specify the structure of the limit under the assumption of $H_0$ and construct a low complexity stochastic upper bound on the null distribution (Theorem \ref{thm:as_dist}(b)) that is used to efficiently approximate the limit under $H_0$ (Section \ref{sec:null_approximation}). The bound is tight for $k=2$. 

We further specify the structure of the limit under $H_a$ and provide sufficient conditions for the limit to be Gaussian by leveraging the results from geometry of multitransportation polytopes from \cite{Emelichev}.  When the limits are not Gaussian, we construct the Normal lower bounds on the alternative limiting distribution (Theorem \ref{thm:as_dist}(c)). Our stochastic bounds on the null and alternative distributions provide an analytically tractable way to study the power of the Optimal Transport based tests (Section \ref{subsec:power}), which, to the best of our knowledge, was not yet considered in the literature.

\par \textit{Consistency and power:} 
We provide a novel power analysis for Optimal Transport based tests of hypotheses \eqref{eq:hypothesis} that encompasses both our test \eqref{eq:test} and the two-sample test in \cite{Sommerfeld} and can potentially be applied to tests based on limiting laws in \cite{Tameling} and \cite{Hundrieser}. We show consistency of the test under fixed alternatives (Proposition \ref{prop:consistency_fixed}), as well as uniform consistency in a certain broad class of alternatives (Theorem \ref{thm:unif_consistency_F} and Proposition \ref{prop:unif_consist_FK}). 

We illustrate theoretical power results in $k=2$ case by providing a lower bound on the power function that explicitly relates sample size and the effect size (Corollary \ref{coro:power}, Figure \ref{fig:power_real}). We also quantify how the population version of our statistic changes with number of measures for certain sequences of alternatives (Lemmas \ref{lem:clustered_alt} and \ref{lem:sparse_alt}), suggesting potential power advantages in these cases. For the case of small sample sizes, we provide a permutation version of the $MOT$ based test (Section \ref{sec:perm}). Comparison with with state-of-the-art tests of \cite{Huskova,Kim,Rizzo} shows strong finite sample power performance of our tests (Figure \ref{fig:synth_3D}).

\par \textit{Computational complexity:} Leveraging recent complexity results  \cite{Altschuler} for $MOT$ (or barycenter) program, we prove polynomial time complexity of the derivative bootstrap that consistently estimates asymptotic distribution of $MOT$ under $H_0$ (Lemma \ref{lem:der_boot_compl}). Polynomial complexity of m-out-of-n bootstrap and permutation procedure follow directly from \cite{Altschuler} (Table \ref{table:complexity}). We also demonstrate that, in addition to bootstrap sampling, the null upper bound of Theorem \ref{thm:as_dist}(b) can efficiently approximate the null distribution when the cardinality of the support $N=|\mathcal{X}|$ is large (Figure \ref{fig:ub}). 

\par \textit{Applications to real data and software:} We illustrate performance of $MOT$ based $k$-sample inference on two synthetic datasets showing strong power performance when testing $H_0$ and the ability to produce meaningful and interpretable confidence regions under $H_a$ (Section \ref{sec:illustration_synth}). Further, we apply our methodology to real data on cancers in the United States populations to confirm claims in cancer studies that were previously made using different methodologies (Section \ref{sec:application_real}, Figures \ref{fig:sizes_1D} and \ref{fig:year}). Current version of the software that implements our methods is available at \url{https://github.com/kravtsova2/mot}.

\section{k-Sample inference on finite spaces using 
	$MOT$}\label{sec:main_results}

\subsection{Notation and preliminary definitions}\label{sec:prelim}
Denote the vector of $k$ true measures supported on $\mathcal{X}=\{x_1\hdots,x_N\} \subseteq \mathbb{R}^d$ by  
$$\mu:=\begin{pmatrix} \mu_1 \\ \vdots\\ \mu_k \end{pmatrix} \in \mathbb{R}^{kN}$$
and the vector of the empirical counterparts $\widehat{\mu}_i:=\frac{1}{n_i}\sum_{j=1}^{n_i}\delta_{x_j}$ by
$$\widehat{\mu}_n:=\begin{pmatrix} \widehat{\mu}_1 \\ \vdots\\ \widehat{\mu}_k  \end{pmatrix} \in \mathbb{R}^{kN}$$
with sample sizes
$$n:=(n_1,\hdots,n_k)$$
where $n\to \infty$ to be interpreted as each sample size tending to infinity. 

Let $MOT(\mu)$ be the optimal value of the program \eqref{eq:mot_general}, which on the finite space $\mathcal{X}$ becomes 
the finite-dimensional linear program 
\begin{equation} \label{eq:mot_primal}
	\begin{aligned}
		& \min_{\pi \geq 0} \langle c,\pi \rangle \\
		& A\pi = \mu
	\end{aligned}
\end{equation}
The optimization variable $\pi \in \mathbb{R}^{N^k}$ is a column vector representing joint probability distribution with marginals $\mu_1,\hdots,\mu_k$ (frequently called \textit{multicoupling}), a matrix $A \in \mathbb{R}^{kN \times N^k}$ encodes the constraints for $\pi$ to be a multicoupling (i.e. that summing certain entries of $\pi$ gives the marginals $\mu_1,\hdots,\mu_k$), and a cost column vector $c \in \mathbb{R}^{N^k}$ contains Euclidean distances between measure support points to their averages given by \eqref{eq:cost}\footnote{The linear program formulation of MOT problem is discussed, for example, on p.3 of \cite{Lin}.}. 

For reader's convenience, Example \ref{ex:main_example} below illustrates the structure of the linear program (\ref{eq:mot_primal}) in a case of three measures, each supported on two points in $\mathbb{R}^1$: 

\begin{example}[\textbf{Illustration of $MOT$ optimization problem for three measures}] \label{ex:main_example} Consider the finite set $\mathcal{X}=\{5,10\} \subset \mathbb{R}^1$ which could represent, for instance, tumor sizes (in centimeters) of cancer patients, and consider probability measures supported on $\mathcal{X}$ representing the probabilities of occurrences of $5 \text{cm}$ and $10 \text{cm}$ tumors in given population of cancer patients. Suppose that the measure $\mu_1$ has probabilities recorded in a vector $\mu_1:=\begin{pmatrix} {\mu_1}_1 \\ {\mu_1}_2 \end{pmatrix}$, and, similarly, $\mu_2:=\begin{pmatrix} {\mu_2}_1 \\ {\mu_2}_2 \end{pmatrix}$, $\mu_3:=\begin{pmatrix} {\mu_3}_1 \\ {\mu_3}_2 \end{pmatrix}$. The multimarginal optimal transport problem (\ref{eq:mot_primal}) is to minimize a linear (with coefficients in $c$) function of a measure $\pi$ on the product space $\mathcal{X} \times \mathcal{X} \times \mathcal{X}$ whose marginals are $\mu_1, \mu_2$, and $\mu_3$, respectively (in this example, all of these sets are equal to $\mathcal{X}$). Technically, $\pi$ is an order-$3$ tensor, i.e. an array with $3$ indices with values in $\{1,2\}$, but for notational convenience we represent it by a long vector  $(\pi_{ijk})_{i,j,k \in \{1,2 \}} \in \mathbb{R}^{2\cdot 2 \cdot 2}$. The cost $c_{ijk}$ in the objective of (\ref{eq:mot_primal}) associated with the entry $\pi_{ijk}$ is the average of squared differences (or squared norms of the differences in higher-dimensional case) between support points $x_i, x_j, x_k$ to their mean $\overline{M}_{ijk}=\frac{1}{3}(x_i+x_j+x_k)$, i.e.
	$$c_{ijk} = \frac{1}{3} \left((x_i - \overline{M}_{ijk})^2 + (x_j - \overline{M}_{ijk})^2  + (x_k - \overline{M}_{ijk})^2  \right)$$
	The objective is to minimize the total discrepancy weighted by $\pi$, which is given by 
	\begin{equation*}
		\langle c, \pi \rangle = c_{111}\pi_{111} + c_{112}\pi_{112} + \cdots + c_{222}\pi_{222} 
	\end{equation*}
	The multicoupling $\pi$ is subjected to having non-negative entries and constrained linearly with $A\pi = \mu$. The constraint matrix $A$ is responsible for making sure that appropriate entries of $\pi$ sum to the given marginals $\mu_1$, $\mu_2$, and $\mu_3$, i.e.
	\begin{equation} \label{eq:ex}
		\underbrace{\begin{pmatrix} 1&1&1&1&0&0&0&0 \\ 0&0&0&0&1&1&1&1 \\ 
				1&1&0&0&1&1&0&0 \\ 0&0&1&1&0&0&1&1 \\ 1&0&1&0&1&0&1&0 \\ 0&1&0&1&0&1&0&1  \end{pmatrix}}_{A}  \underbrace{\begin{pmatrix} \pi_{111} \\ \pi_{112} \\ \pi_{121} \\ \pi_{122} \\ \pi_{211} \\ \pi_{212} \\ \pi_{221} \\ \pi_{222}\end{pmatrix}}_{\pi} = \underbrace{\begin{pmatrix} {\mu_1}_1 \\ {\mu_1}_2 \\ {\mu_2}_1 \\ {\mu_2}_2 \\ {\mu_3}_1 \\ {\mu_3}_2  \end{pmatrix}}_{\mu}
	\end{equation}
	finishing the example.
\end{example}

\vspace{1 cm}

The dual program of (\ref{eq:mot_primal}) is given by
\begin{equation} \label{eq:mot_dual}
	\begin{aligned}
		& \max_{u} \langle u, \mu \rangle \\
		& A'u \leq c
	\end{aligned}
\end{equation}
(the derivation of the dual follows from the standard theory of linear programming, e.g. Section 4.1 of \cite{Bertsimas}). A column vector $u:=\begin{pmatrix} u_1 \\ \vdots \\ u_k \end{pmatrix} \in \mathbb{R}^{kN}$ contains dual variables, one for each measure, and the objective of (\ref{eq:mot_dual}) can be thought of summing the contributions $\langle u_1, \mu_1 \rangle + \cdots + \langle u_k, \mu_k \rangle$. 

Let $\Phi^*$ denote the set of dual optimal solutions to \eqref{eq:mot_dual}. This set consists of all vectors $u$ that result in the maximum value of the dual objective (= minimum value of the primal objective by strong duality, e.g. Theorem 4.4 of \cite{Bertsimas}) and satisfy the dual constraints, i.e.

\begin{equation}\label{eq:dualsol}
	\Phi^*:=\{u=\begin{pmatrix}u_1 \\ \vdots \\ u_k \end{pmatrix} \in \mathbb{R}^{kN}: \langle u, \mu \rangle = MOT(\mu), \text{ } A'u \leq c \}
\end{equation}

We consider the asymptotic behavior of scaled and centered empirical estimator $MOT(\widehat{\mu}_n)$ by establishing the weak limit
$$\rho_n\left(MOT(\widehat{\mu}_n)-MOT(\mu)\right) \overset{\text{ in law }}{\longrightarrow} X$$
as $n \to \infty$ where $X\overset{d}{=} X_0$ under $H_0$ and $X\overset{d}{=} X_a$ under $H_a$. The set $\Phi^*$ will be needed to define the limit $X$. 

\subsection{Definitions of $H_0$ testing and $H_a$ inference procedures}\label{sec:procedures}
Consider the statistic 
$$T_n:=\rho_n\left(MOT(\widehat{\mu}_n)-MOT(\mu)\right)$$
where $MOT(\mu)=0$ under $H_0$ and $MOT(\mu)>0$ under $H_a$.

An $\alpha$-level test of $H_0$ would reject $H_0$ if $\rho_n MOT(\widehat{\mu}_n)$ is large, i.e. if $T_n$ exceeds a $(1-\alpha)$th quantile of its null distribution $\mathcal{D}_0$. However, as Theorem \ref{thm:as_dist} shows, $\mathcal{D}_0$ depends on the unknown true $\mu$, and hence care must be taken to ensure that the estimated cut-off used for the test still results in the (asymptotic) level $\alpha$. 

To this end, we consider a consistent bootstrap estimator of $\mathcal{D}_0$ given in Proposition \ref{prop:boot} and denote its $(1-\alpha)$-th quantile by $c_{\alpha,\mathcal{D}_0}$. Consistency of the bootstrap is shown using results of \cite{Fang}, and by Corollary 3.2 of the same work such bootstrap based cut-off gives an asymptotic level $\alpha$ test of $H_0$. Using this cut-off, we define the asymptotic test of $H_0$ as a map
\begin{equation}\label{eq:test}
	\phi_{n,\mu}:=
	\begin{cases}
		1 & \text{ if } T_n \geq c_{\alpha,\mathcal{D}_0} \\
		0 & \text{ otherwise }
	\end{cases}
\end{equation}

Similarly, the distribution of $T_n$ under $H_a$ is consistency estimated by bootstrap in Proposition \ref{prop:boot}, with resulting $(\alpha/2)$-th and $(1-\alpha/2)$-th quantiles denoted by $c_{\alpha/2,\mathcal{D}_a}$ and $c_{1-\alpha/2,\mathcal{D}_a}$, respectively. The asymptotic $(1-\alpha)\%$ Confidence Region for $MOT(\mu)$ under $H_a$ is given by
\begin{equation}\label{eq:CR}
	\left(\frac{1}{\rho_n} MOT(\widehat{\mu}_n) - c_{1-\alpha/2,\mathcal{D}_a}, \frac{1}{\rho_n} MOT(\widehat{\mu}_n) - c_{\alpha/2,\mathcal{D}_a}  \right)
\end{equation}

\subsection{Asymptotic distributions of $MOT$ under $H_0$ and $H_a$}\label{sec:asym}

\begin{figure}
	\includegraphics[width=1\textwidth]{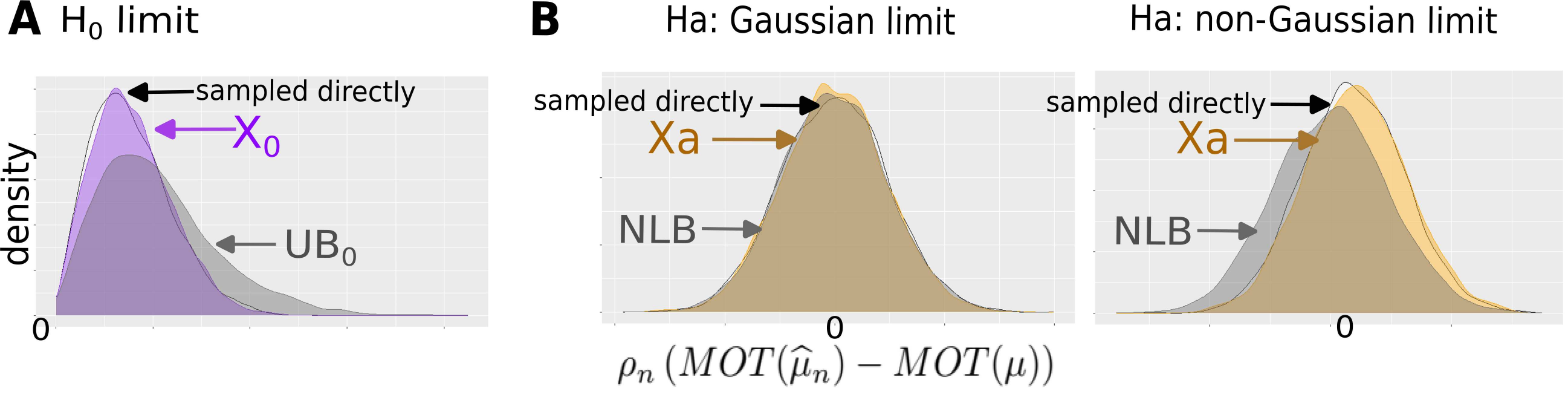}
	\caption{Illustration of asymptotic distributions of $MOT$ given by Theorem \ref{thm:as_dist} on the set up of Example \ref{ex:main_example}. A. Under $H_0$ (Theorem \ref{thm:as_dist}(b)): null distribution $X_0$, the upper bound $UB_0$, and $MOT$ values sampled directly (black line) by sampling empirical measures from the truth and evaluating $MOT$ value. All densities here and in the rest of the paper are estimated by kernel density estimators in ggplot2 \cite{ggplot2} with default parameters. B. Under $H_a$ (Theorem \ref{thm:as_dist}(b)): Gaussian limit under Condition (A1) (Left) and non-Gaussian limit (Right) with Normal Lower Bounds (NLB's) ($NLB\overset{d}{=} X_a$ in the Gaussian case). }
	\label{fig:limit_laws}
\end{figure}

Theorem \ref{thm:as_dist}(a) provides the general form of the asymptotic distribution of $MOT(\widehat{\mu}_n)$ on finite spaces. This distribution is given by the Hadamard directional derivative of the $MOT$ functional, which is an optimal value of a linear program with a feasible set consisting of dual optimal solutions $\Phi^*$  \eqref{eq:dualsol}. If $\Phi^*$ is a singleton, the limit in Theorem \ref{thm:as_dist}(a) is a linear combination of Gaussians, and hence is also Gaussian. If not, the limit is the maximum (taken over the feasible set $\Phi^*$) of such linear combinations\footnote{Note that adding a constant to any dual vector $u_{i}$ and subtracting the same constant from any other dual vector $u_{i'}$ (or distributing such constant over several other dual vectors and subtracting) does not change the dual objective  and does not violate the dual constraints in \eqref{eq:mot_dual}. So by uniqueness of the dual variables $u=(u_1,\cdots,u_k)$ we actually mean uniqueness up to this operation, which can be viewed as considering equivalence classes of dual solutions under this operation.}.

By the theory of linear programming, it is possible to assess whether the set of dual optimal solutions $\Phi^*$ is a singleton or not based on the corresponding set of basic optimal solutions to the primal program (we use Theorem 5.6.1 of \cite{Sierksma}, and Chapters 4 and 5 of \cite{Bertsimas} for the general linear programming results employed below). In our case, the basic optimal solutions to the primal program \eqref{eq:mot_primal} are the vertices of a multitransportation polytope $P(\mu_1,\hdots,\mu_k):=\{\pi \geq 0: A\pi = \mu\}$ given by multicouplings $\pi^*$. These vertices contain $\leq \text{rank}(A) = kN - k +1$ positive entries, and a vertex is termed \textit{degenerate} if it contains strictly less (p. 366 of \cite{Emelichev}). 

The dual optimal set $\Phi^*$ cannot be a singleton if an optimal solution to the primal $MOT$ program is unique and degenerate. This is always the case under $H_0$: the unique optimal solution $\pi^*$ is given by the ``identity" multicoupling (with $\mu_1$ in the entries with the same tuple indices and zeros otherwise - so it is degenerate). Hence, the asymptotic distribution of $MOT$ under $H_0$ is never a Gaussian (Theorem \ref{thm:as_dist}(b)).

The dual optimal set $\Phi^*$ is a singleton if there exists a \textit{non-degenerate} primal optimal vertex, i.e. an optimal multicoupling $\pi^*$ with $kN-k+1$ positive entries. This can be possible under certain $H_a$'s; in particular, this is possible if  a multitransportation polytope $P(\mu_1,\hdots,\mu_k):=\{\pi \geq 0: A\pi = \mu\}$ contains no degenerate vertices. In this case, $\Phi^*$ is a singleton, and the corresponding asymptotic distribution of $MOT$ is Gaussian (Theorem \ref{thm:as_dist}(c)). We use a result in discrete geometry from \cite{Emelichev} to provide a sufficient condition (A1) that leads to Gaussian limits under $H_a$: 

\begin{condition}[A1]\label{assn:non_degeneracy} (\textit{Regularity}, Definition 1.5 in Chapter 8 of \cite{Emelichev})
	For each $i=1,\hdots,k$, order the entries of the vector $\mu_i$ 
	as $\mu_i^1 \geq \mu_i^2 \geq \cdots \geq \mu_i^N$, and assume that $\mu_i^1 < \mu_{i+1}^1$ for $i=1,\hdots,k-1$. A multitransportation polytope $P(\mu_1,\hdots,\mu_k)$ is \textit{regular} if 
	$$\mu_i^N + \sum_{j=i+1}^k \mu_j^1 > k-i, \hspace{0.5 cm} \forall  i=1,\hdots,k-1$$
	A regular multitransportation polytope does not have degenerate vertices (Lemma 1.4 in Chapter 8 of \cite{Emelichev}).
\end{condition}

\begin{remark}\label{rmk:non_degeneracy_k2} For $k=2$ measures, the condition (A1) is implied by the following \textit{No Subset Sum} condition that ensures that a transportation polytope $P(\mu_1,\mu_2)$ has no degenerate vertices\footnote{This condition is mentioned by \cite{Staudt} for uniqueness of Kantorovich potentials for $k=2$ finitely supported measures.}: there is no proper subsets of indices $I,J \subset [N]$ such that $\sum_{i \in I}\mu_1^i = \sum_{j \in J} \mu_2^j$. This condition is both necessary and sufficient to exclude degenerate vertices in $k=2$ case (see, e.g., Theorem 1.2 in Chapter 6 of \cite{Emelichev}). 
\end{remark}

\begin{theorem}[\textbf{Asymptotic distribution of $MOT$ on finite spaces}]\label{thm:as_dist} Assume that the sizes of $k$ samples $n_1, \hdots, n_k \to \infty$ satisfying $\frac{n_i}{n_1 + \hdots + n_k} \to \lambda_i \in (0,1)$. Denote $\rho_n := \frac{\sqrt{n_1\cdot \hdots \cdot n_k}}{{\left(\sqrt{n_1+ \hdots + n_k}\right)^{k-1}}}$ and $a_i := \prod_{j \neq i} \lambda_j$. Then,
	\begin{itemize}
		\item[(a)] The asymptotic distribution of $MOT$ is given by
		\begin{equation}\label{eq:limdist}\rho_n\left(MOT(\widehat{\mu}_n) - MOT(\mu)   \right) \xrightarrow[]{\text{in law}} \max_{u \in \Phi^*} \sum_{i=1}^k \sqrt{a_i}\langle u_i, G_i   \rangle
		\end{equation}
		where the feasible set $\Phi^*$ is given by \eqref{eq:dualsol}, and $G_i \overset{\text{indep.}}{\sim} N(0,\Sigma_i)$ with $\Sigma_i = \text{diag}(\mu_i) - \mu_i \mu_i'$. 
		\item[(b)] Under $H_0$, the limit in \eqref{eq:limdist} is non-Gaussian, and given by
		\begin{equation*}\label{eq:limdist_null}
			\rho_n\left(MOT(\widehat{\mu}_n) - 0   \right) \xrightarrow[]{\text{in law}} X_0 \sim \mathcal{D}_0
		\end{equation*}
		where $\mathcal{D}_0$ is given by
		\begin{equation}\label{eq:limdist_null_expression}
			\begin{aligned}
				\max_{u} & \sum_{i=1}^k \sqrt{a_i}\langle u_i, G_i   \rangle \\
				\text{ s.t. } & \sum_{i=1}^k u_i = 0 \\
				& A'u \leq c
			\end{aligned}
		\end{equation}
		with $G_i \overset{\text{indep.}}{\sim} N(0,\Sigma_1)$ with $\Sigma_1 = \text{diag}(\mu_1) - \mu_1 \mu_1'$.
		
		Furthermore, there exists ${UB}_0 \sim \mathcal{D}_{{UB}_0}$ on the same probability space as $X_0$ such that ${UB}_0 \geq X_0$ everywhere, and $\mathcal{D}_{{UB}_0}$ is given by
		\begin{equation}\label{eq:UB_0}
			\begin{aligned}
				\max_{u} & \sum_{i=2}^k \langle u_i, \sqrt{a_i}G_i - \sqrt{a_1}G_1   \rangle \\
				& \widetilde{A}'u \leq \widetilde{c}
			\end{aligned}
		\end{equation}
		where $\widetilde{A}' u \leq \widetilde{c}$ is a subset of constraints from \eqref{eq:limdist_null_expression}.
		\item[(c)] Under $H_a$, 
		\begin{equation*}\rho_n\left(MOT(\widehat{\mu}_n) - MOT(\mu)   \right) \xrightarrow[]{\text{in law}} X_a \sim \mathcal{D}_a
		\end{equation*}
		where $\mathcal{D}_a$ is given by \eqref{eq:limdist}.
		Furthermore, for every $u^* \in \Phi^*$ given by \eqref{eq:dualsol}, there exists a random variable $NLB_{u^*}$ on the same probability space as $X_a$, such that $NLB_{u^*} \leq X_a$ everywhere, and 
		\begin{equation}\label{eq:limdist_alt_ub}
			NLB_{u^*}\sim \mathcal{N}\left(0,\sum_{i=1}^k a_i{u_i^*}'\Sigma_i u_i^*\right)
		\end{equation}
		
		If Condition (A1) holds, then $\Phi^*$ is a singleton $\{u^* \}$, and $X_a \overset{d}{=}NLB_{u^*}$. 
	\end{itemize}
\end{theorem}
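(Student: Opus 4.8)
The plan is to obtain all three parts from the functional delta method for Hadamard directionally differentiable maps (\cite{Shapiro,Romisch,Fang,Carcamo}), applied to the linear-programming value function $\nu\mapsto MOT(\nu)$. \textbf{Part (a).} The first step is to put the empirical process in the scaling $\rho_n$. By the multinomial CLT, $\sqrt{n_i}(\widehat{\mu}_i-\mu_i)\rightsquigarrow G_i\sim N(0,\Sigma_i)$ with $\Sigma_i=\operatorname{diag}(\mu_i)-\mu_i\mu_i'$, and these limits are independent across $i$ since the $k$ samples are. Writing $N_{\mathrm{tot}}=n_1+\cdots+n_k$, one checks $\rho_n^2/n_i=\prod_{j\ne i}(n_j/N_{\mathrm{tot}})\to\prod_{j\ne i}\lambda_j=a_i>0$, so Slutsky gives $\rho_n(\widehat{\mu}_n-\mu)\rightsquigarrow G:=(\sqrt{a_1}G_1,\dots,\sqrt{a_k}G_k)$. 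The second step is Hadamard directional differentiability of $MOT$: by LP duality $MOT(\nu)=\max\{\langle u,\nu\rangle:A'u\le c\}$ is the maximum of a linear functional over the \emph{fixed} polyhedron $\{u:A'u\le c\}$, hence finite, convex and piecewise linear on the affine slice $\{\nu:\langle\mathbf 1,\nu_i\rangle=1\ \forall i\}$; a finite convex function on a finite-dimensional space is locally Lipschitz and directionally differentiable, and standard LP sensitivity analysis identifies the derivative as $MOT'_\mu(h)=\max_{u\in\Phi^*}\langle u,h\rangle$ (finite for directions $h$ in the tangent space $\{h:\langle\mathbf 1,h_i\rangle=0\}$, since $MOT(\mu+th)<\infty$ for small $t$). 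Applying the delta method to $\rho_n(\widehat{\mu}_n-\mu)\rightsquigarrow G$ yields $\rho_n(MOT(\widehat{\mu}_n)-MOT(\mu))\rightsquigarrow MOT'_\mu(G)=\max_{u\in\Phi^*}\sum_{i=1}^k\sqrt{a_i}\langle u_i,G_i\rangle$, which is \eqref{eq:limdist}.

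\textbf{Part (b).} Under $H_0$ the cost is $0$ exactly on the diagonal tuples, so the ``identity'' multicoupling is the \emph{unique} primal optimum with value $0$; being supported on $N<\operatorname{rank}(A)=kN-k+1$ tuples, it is degenerate, and by LP theory (\cite{Sierksma}, Thm.~5.6.1; \cite{Bertsimas}) the absence of a non-degenerate optimal vertex makes the reduced dual optimum non-unique, so $\Phi^*$ (modulo constant shifts between the $u_i$) is genuinely larger than a point. To get the explicit form \eqref{eq:limdist_null_expression}, note that the diagonal constraints of $A'u\le c$ read $\sum_i(u_i)_j\le c_{jj\cdots j}=0$, i.e.\ $\sum_i u_i\le 0$ componentwise; together with $\langle u,\mu\rangle=\langle\sum_i u_i,\mu_1\rangle=0$ and $\mu_1>0$ (restrict $\mathcal X$ to the common support) this forces $\sum_i u_i=0$, so $\Phi^*=\{u:\sum_i u_i=0,\ A'u\le c\}$. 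Non-Gaussianity follows because $X_0\ge 0$ a.s.\ (value at $u=0\in\Phi^*$), so it could only be Gaussian if a.s.\ constant; but $X_0$ dominates $\sum_i\sqrt{a_i}\langle u_i^*,G_i\rangle$ for any $u^*\in\Phi^*$ not shift-equivalent to $0$, and such a $u^*$ has some $u_i^*$ not a multiple of $\mathbf 1$, making that Gaussian non-degenerate and hence $X_0$ non-constant. Finally, eliminating $u_1=-\sum_{i\ge 2}u_i$ turns the objective into $\sum_{i=2}^k\langle u_i,\sqrt{a_i}G_i-\sqrt{a_1}G_1\rangle$ and the constraints into a system in $(u_2,\dots,u_k)$; keeping only an appropriate subfamily of those inequalities (all of them when $k=2$, which yields tightness there) enlarges the feasible polyhedron, so the maximum $UB_0$ built from the same Gaussian vector satisfies $UB_0\ge X_0$ pointwise and has law \eqref{eq:UB_0}.

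\textbf{Part (c).} The Normal lower bound is obtained by evaluating the maximum in \eqref{eq:limdist} at a single $u^*\in\Phi^*$: $NLB_{u^*}:=\sum_{i=1}^k\sqrt{a_i}\langle u_i^*,G_i\rangle\le X_a$ everywhere, and being a sum of independent centered Gaussians it is $\mathcal N\!\left(0,\sum_i a_i (u_i^*)'\Sigma_i u_i^*\right)$, i.e.\ \eqref{eq:limdist_alt_ub}. For the Gaussian case, Condition (A1) is precisely regularity, so by \cite{Emelichev} (Lemma 1.4, Ch.~8) $P(\mu_1,\dots,\mu_k)$ has no degenerate vertices; the LP then attains its optimum at a non-degenerate vertex, and complementary slackness (\cite{Sierksma}, Thm.~5.6.1; \cite{Bertsimas}) makes the dual optimum unique up to constant shifts, i.e.\ $\Phi^*=\{u^*\}$. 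Since $\langle\mathbf 1,G_i\rangle=0$ a.s.\ makes the limit term shift-invariant, the max collapses and $X_a=\sum_i\sqrt{a_i}\langle u_i^*,G_i\rangle=NLB_{u^*}$ is Gaussian.

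\textbf{Where the difficulty lies.} The step needing the most care is Part (a): one must simultaneously pin down the single rate $\rho_n$ that makes the heterogeneous-sample empirical process converge jointly and non-trivially, and feed the merely directionally differentiable functional $MOT$ through the delta method, which requires checking that the unboundedness of $\Phi^*$ along the $(k-1)$ constant-shift directions does not render $\max_{u\in\Phi^*}\langle u,\cdot\rangle$ infinite --- handled by restricting to the probability-simplex tangent space, on which $\widehat{\mu}_n-\mu$ and $G$ live and the shift directions are annihilated. The remaining bespoke steps --- the $H_0$ reduction $\Phi^*=\{\sum_i u_i=0,\ A'u\le c\}$ and the selection of the subfamily of constraints defining $UB_0$ --- are elementary once the zero-diagonal-cost observation and the $k=2$ tightness requirement are in hand, and the Gaussian case of (c) is a direct import of \cite{Emelichev} combined with standard LP duality.
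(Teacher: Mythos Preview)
Your proposal is correct and follows essentially the same route as the paper: the functional delta method for Hadamard directionally differentiable maps applied to the LP value function for (a), the $\sum_i u_i=0$ reduction under $H_0$ together with relaxation to a subfamily of constraints for the upper bound in (b), and evaluation at a fixed dual optimizer plus the Emelichev regularity result for (c). The only differences are cosmetic --- you derive Hadamard directional differentiability from convexity/piecewise linearity rather than G\^ateaux differentiability plus a local Lipschitz check, and you argue non-Gaussianity via $X_0\ge 0$ together with a non-degenerate Gaussian lower bound rather than via primal degeneracy --- and your explicit treatment of the constant-shift unboundedness of $\Phi^*$ through the tangent-space restriction is handled in the paper by a normalization footnote.
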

\begin{proof}[Proof summary for Theorem \ref{thm:as_dist}]
	Proof of part (a) is outlined below, with details in Appendix \ref{app:proof_thm_a}. In what follows, for each $i=1,\hdots,k$, we view the measures $\mu_i \in \mathcal{P}(\mathcal{X})\subseteq\left( l^1(\mathcal{X}), \| \cdot\|_{l^1} \right)$, and the dual vectors $u_i \in \left(l^\infty(\mathcal{X}), \| \cdot\|_{l^\infty}\right)$. The weak convergence and Hadamard directional differentiability are with respect to $l^1$ norm on $\bigotimes_{i=1}^k l^1(\mathcal{X})$. 
	\begin{itemize}
		\item[] \underline{Step 1} Establish, for a suitable scaling $\rho_n$, the weak limit $\sqrt{a}G$ of the empirical process
		$$\rho_n\left( \widehat{\mu}_n - \mu  \right) \xrightarrow[]{\text{in law}} \sqrt{a} G$$
		
		\item[] \underline{Step 2} Confirm that the functional $\mu \longrightarrow MOT(\mu)$ is Hadamard directionally differentiable at $\mu$ with derivative
		$$f_\mu'(G) = \max_{u \in \Phi^*} \sum_{i=1}^k \sqrt{a_i}\langle u_i, G_i   \rangle$$
		\item[] \underline{Step 3} Use Delta Method for Hadamard directionally differentiable maps \cite{Romisch, Shapiro_asymptotic} to conclude that
		$$\rho_n \left(f(\widehat{\mu}_n) - f(\mu) \right) \xrightarrow[]{\text{in law}} f'_\mu(G)$$
	\end{itemize}
	
	Proof of part (b) is given in Appendix \ref{app:proof_thm_b}. It provides the exact form of the proposed upper bound $UB_0$ reporting the constraints in $\widetilde{A}$. To construct $\widetilde{A}$, we consider how the inequality constraints $A'u\leq c$ behave on the kernel of the map $\{u \longrightarrow \sum_{i=1}^k u_i\}$. Resulting upper bound has only polynomially many constraints and can be sampled efficiently to approximate the null distribution in Section \ref{sec:null_approximation}. We remark that the proposed bound is not unique: in particular, it can be strengthened by including more constraints from $A$ (Section \ref{sec:limitations}). The proposed bound is tight when $k=2$\footnote{We remark here that the null distribution program \eqref{eq:limdist_null_expression} can be written with $k-1$ dual variables instead of $k$ due to the constraint $\sum_{i=1}^k ui = 0$; this is what \cite{Sommerfeld} refer to in $k=2$ case (p. 227). We choose to keep the form \eqref{eq:limdist_null_expression} for notational convenience}.  
	
	For part (c), if the limit is not Gaussian (i.e., the feasible set $\Phi^*$ is not a singleton), one can take any $u^* \in \Phi^*$ and consider the (random) objective \eqref{eq:limdist} evaluated at $u^*$. Resulting value lower bounds the value of the maximization program \eqref{eq:limdist}, and it is distributed according to \eqref{eq:limdist_alt_ub}.  
	
\end{proof}

\begin{obs}\label{obs:cost_dist}
	The entries of the cost vector $c_{i_1i_2\cdots i_k}$ indexed by $i_1,\cdots,i_k \in \{1,\hdots,N \}$ with $k-1$ coinciding index values can be written in terms of the distance between two points with unique indices scaled by $\frac{k-1}{k^2}$. For example, $c_{1\cdots1 2} = \frac{k-1}{k^2} \| x_1 - x_2\|^2$. The details are provided in Appendix \ref{app:proof_cost_dist}.
\end{obs}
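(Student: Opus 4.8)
\textbf{Proof proposal for Observation \ref{obs:cost_dist}.}

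The plan is a direct computation using the definition of the cost in \eqref{eq:cost}. Fix indices $i_1,\dots,i_k \in \{1,\dots,N\}$ in which exactly $k-1$ of them coincide; by relabeling suppose $i_1 = \cdots = i_{k-1} = a$ and $i_k = b$, so the associated $k$-tuple of support points is $(x_a,\dots,x_a,x_b)$ with $x_a$ repeated $k-1$ times. First I would compute the barycenter (average) of this tuple,
\[
\overline{M} = \frac{1}{k}\Bigl((k-1)x_a + x_b\Bigr) = x_a + \frac{1}{k}(x_b - x_a),
\]
so that $x_a - \overline{M} = -\tfrac{1}{k}(x_b - x_a)$ and $x_b - \overline{M} = \tfrac{k-1}{k}(x_b - x_a)$.

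Next I would substitute these two displacement vectors into \eqref{eq:cost}. The cost is $\tfrac{1}{k}$ times the sum of squared norms of the $k$ displacements: there are $k-1$ copies of $x_a - \overline{M}$ and one copy of $x_b - \overline{M}$, giving
\[
c_{i_1\cdots i_k} = \frac{1}{k}\left[(k-1)\left\|\tfrac{1}{k}(x_b-x_a)\right\|^2 + \left\|\tfrac{k-1}{k}(x_b-x_a)\right\|^2\right]
= \frac{1}{k}\cdot\frac{(k-1) + (k-1)^2}{k^2}\,\|x_a - x_b\|^2.
\]
Since $(k-1) + (k-1)^2 = (k-1)k$, the bracketed factor simplifies to $\tfrac{(k-1)k}{k^3} = \tfrac{k-1}{k^2}$, yielding $c_{i_1\cdots i_k} = \tfrac{k-1}{k^2}\|x_a - x_b\|^2$. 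Specializing to $a=1$, $b=2$ (with $k-1$ leading indices equal to $1$) recovers the stated example $c_{1\cdots 1 2} = \tfrac{k-1}{k^2}\|x_1 - x_2\|^2$. The only care needed is the bookkeeping that the displacement of the "minority" point has magnitude $\tfrac{k-1}{k}\|x_a-x_b\|$ while each of the $k-1$ "majority" points is displaced by $\tfrac{1}{k}\|x_a-x_b\|$; there is no real obstacle here, the statement is a one-line algebraic identity once the barycenter is written down, and the argument is manifestly dimension-free since it only involves $\|\cdot\|$ on $\mathbb{R}^d$.
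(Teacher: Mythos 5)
Your computation is correct and follows exactly the same route as the paper's Appendix proof: write the barycenter of the tuple $(x_a,\dots,x_a,x_b)$, express the two distinct displacements $x_a-\overline{M}=-\tfrac{1}{k}(x_b-x_a)$ and $x_b-\overline{M}=\tfrac{k-1}{k}(x_b-x_a)$, plug into \eqref{eq:cost}, and simplify $(k-1)+(k-1)^2=(k-1)k$. No gap, no meaningful difference in approach.
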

\begin{lemma}[\textbf{Bounds on the dual variables}]\label{lem:dual_bd_null}
	Fix $k\geq 2$. Let $u = (u_1,\hdots,u_k)$ be optimal solutions to the dual $MOT$ program \eqref{eq:mot_dual} satisfying $\sum_{i=1}^k u_i = 0$ and chosen\footnote{Recall that adding a constant to any dual variable $u_{i}$ and subtracting the same constant from any other dual variable $u_{i'}$ does not change the dual objective  and does not violate the dual constraints in \eqref{eq:mot_dual}. Hence, given any vector of dual solutions $u=(u_1,\hdots,u_k)$, the first entries of $u_2,\hdots,u_k$ can be normalized to zero, and the constraint $\sum_{i=1}^k u_i = 0$ would force the first entry of $u_1$ to be zero as well. Such normalization is frequently done to avoid redundant solutions - see, e.g., the definition of dual transportation polyhedron in \cite{Balinski}.} such that the first entries ${u_i}_1 = 0$. Then for each $i=1,\hdots,k$, the $j$th entry of  $u_i$ is bounded as
	$$|u_i|_j \leq \frac{k-1}{k^2}  \|x_1 - x_j \|^2$$
	where $\|\cdot-\cdot \|^2$ is the squared distance on the ground metric space $\mathcal{X}=\{x_1,\hdots, x_N \}$. It follows that
	$$\| u_i\| \leq \frac{k-1}{k^2} C(\mathcal{X}), \text{ } i=1, \hdots, k$$
	where $C(\mathcal{X})$ depends only on the ground metric space $\mathcal{X}$.
\end{lemma}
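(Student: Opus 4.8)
The plan is to bootstrap the entrywise bound from the dual feasibility constraints $A'u \le c$ by a telescoping/averaging argument, then deduce the norm bound trivially. First I would recall from Observation \ref{obs:cost_dist} that any cost entry $c_{i_1\cdots i_k}$ whose multi-index has $k-1$ equal coordinates equals $\frac{k-1}{k^2}\|x_p - x_q\|^2$, where $x_p$ is the repeated support point and $x_q$ the unique one. The rows of $A'$ corresponding to such multi-indices pick out, in the dual objective $A'u$, exactly a sum of the form $(k-1)\,{u_m}_p + {u_{m'}}_q$ for the appropriate marginal indices $m, m'$ (with $k-1$ of the $k$ dual vectors contributing their $p$-th entry and one contributing its $q$-th entry). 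So each such constraint reads
\begin{equation*}
	\sum_{\ell \neq m'} {u_\ell}_{\,p_\ell} + {u_{m'}}_{\,q} \;\le\; \frac{k-1}{k^2}\|x_{p}-x_{q}\|^2,
\end{equation*}
where all but one of the $p_\ell$ equal the same index. The key special case is to take all the repeated indices equal to $1$: since ${u_i}_1 = 0$ for every $i$ by the chosen normalization, the left-hand side collapses to the single term ${u_{m'}}_{\,q}$, giving ${u_{m'}}_q \le \frac{k-1}{k^2}\|x_1 - x_q\|^2$ for every marginal $m'$ and every $q$.

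This gives the upper bound on each entry; for the matching lower bound I would use that the normalization is symmetric in a suitable sense — more precisely, for any two marginals $m \neq m'$ one also has constraints of the form ${u_m}_{\,1}\cdot(k-1) + {u_{m'}}_{\,q} \le \cdots$ and, by relabelling which coordinate is the unique one, constraints that bound ${u_{m'}}_q$ from below in terms of ${u_m}_q$ and the already-bounded entries. Concretely, applying the upper bound to all coordinates and then using the constraint $\sum_{i=1}^k u_i = 0$ entrywise, one gets ${u_m}_q = -\sum_{i\neq m}{u_i}_q \ge -(k-1)\cdot \frac{k-1}{k^2}\max_{q'}\|x_1-x_{q'}\|^2$; a cleaner route is to observe that the roles of the marginals are interchangeable in the argument above, so applying the upper-bound argument after swapping marginal $m$ with marginal $1$ (which also has first entry zero) bounds $-{u_m}_q$ by the same quantity, yielding $|{u_m}_q| \le \frac{k-1}{k^2}\|x_1 - x_q\|^2$. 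Finally, the norm bound follows by summing: $\|u_i\|^2 \le \sum_{j=1}^N \left(\frac{k-1}{k^2}\right)^2 \|x_1 - x_j\|^4 =: \left(\frac{k-1}{k^2}\right)^2 C(\mathcal{X})^2$, so one sets $C(\mathcal{X}) := \big(\sum_{j=1}^N \|x_1-x_j\|^4\big)^{1/2}$, a constant depending only on $\mathcal{X}$.

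The main obstacle I anticipate is the lower bound on the entries: the upper bound drops out immediately from a single well-chosen feasibility constraint, but getting $|{u_m}_q|$ rather than just ${u_m}_q \le \cdots$ requires either exploiting the symmetry between marginals carefully (which constraints exist and which index plays the "unique" role) or combining the entrywise upper bounds across all marginals with the equality $\sum_i u_i = 0$. I would double-check that the relevant rows of $A$ — those indexing multi-indices with exactly one distinct coordinate value — really do produce the stated $(k-1){u_m}_p + {u_{m'}}_q$ pattern for every ordered pair $(m,m')$ and every pair of support indices, since that is what licenses the symmetric treatment of the marginals; this is a direct but slightly fiddly unpacking of the structure of $A$ illustrated in Example \ref{ex:main_example}. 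Everything else (the telescoping with the zero first entries, the final Cauchy–Schwarz–type sum) is routine.
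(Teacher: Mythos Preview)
Your upper bound is fine and essentially coincides with the paper's: picking the multi-index $(1,\ldots,1,j,1,\ldots,1)$ with the $j$ in position $m$, the dual constraint reads $\sum_{\ell\neq m}{u_\ell}_1 + {u_m}_j \le c_{1\cdots 1 j 1\cdots 1}$, and the normalization ${u_\ell}_1=0$ collapses this to ${u_m}_j \le \frac{k-1}{k^2}\|x_1-x_j\|^2$. (A side remark: the row of $A'$ does \emph{not} produce ``$(k-1){u_m}_p + {u_{m'}}_q$'' --- it produces $\sum_{\ell\neq m'}{u_\ell}_p + {u_{m'}}_q$, a sum over $k-1$ \emph{different} marginals. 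Your displayed constraint has this right, so the earlier sentence is just a slip.)

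The gap is in the lower bound. Your route (a), combining the upper bounds with $\sum_i u_i=0$, loses a factor of $k-1$ and hence does not deliver the stated inequality $|{u_m}_j|\le \frac{k-1}{k^2}\|x_1-x_j\|^2$. Your route (b), ``swap marginal $m$ with marginal $1$'', does not yield a bound on $-{u_m}_j$: permuting marginals just permutes which dual vector you are bounding from above, it never flips a sign. The paper's fix is to pick the \emph{complementary} multi-index $(j,\ldots,j,1,j,\ldots,j)$ with the $1$ in position $m$. The constraint is then $\sum_{\ell\neq m}{u_\ell}_j + {u_m}_1 \le c_{j\cdots j 1 j\cdots j} = \frac{k-1}{k^2}\|x_1-x_j\|^2$; now invoke $\sum_i u_i=0$ at the $j$th \emph{entry} (not just at entry $1$) to rewrite $\sum_{\ell\neq m}{u_\ell}_j = -{u_m}_j$, and use ${u_m}_1=0$, giving $-{u_m}_j \le \frac{k-1}{k^2}\|x_1-x_j\|^2$. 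You gestured at ``relabelling which coordinate is the unique one'' but then abandoned it for the weaker sum; that relabelling, together with the entrywise zero-sum at index $j$, is exactly the missing step.
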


\begin{remark}\label{rmk:slackness}
	The assumption $\sum_{i=1}^k u_i = 0$ holds for those $\mu$ for which
	the primal optimal solution - the multicoupling $(\pi_{i_1,\hdots,i_k})_{i_1,\hdots,i_k}$ - assigns a positive mass to the ``diagonal" tuples $(x_{i_1},\hdots,x_{i_1})$, i.e. $\pi_{i_1,\hdots,i_1}>0$ for $i_1=1,\hdots,N$. By complementary slackness result in linear programming (see, e.g., Theorem 4.5 of \cite{Bertsimas}), in this case, the corresponding constraints of the dual 
	$$(u_1)_j + \cdots + (u_k)_j \leq c_{j,\hdots,j} = 0, \text{ } j=1,\hdots,N$$
	hold with equality, giving $\sum_{i=1}^k u_i = 0$. This always holds under $H_0$ and frequently happens under $H_a$. 
\end{remark}

Using the above results, Proposition \ref{prop:c_alpha_bd} defines an upper bound on all test cut-offs $c_{\alpha,\mathcal{D}_0}$, which is independent of the nature and the number of measures in $\mu(k)=(\mu_1,\hdots,\mu_k)$. This bound is used to prove consistency of the test \eqref{eq:test} with cut-offs $c_{\alpha,\mathcal{D}_0}$ uniformly over $\mu$ (Theorem \ref{thm:unif_consistency_F}) and $k$ (Proposition \ref{prop:unif_consist_FK}).  

\begin{prop}[\textbf{Bound for test cut-off $c_{\alpha,\mathcal{D}_0}$}]\label{prop:c_alpha_bd}
	Fix the test level $\alpha \in (0,1)$. There exists $c_\alpha(\mathcal{X})$ depending only on the ground metric space $\mathcal{X}$ such that
	$$c_{\alpha,\mathcal{D}_0} \leq c_{\alpha}(\mathcal{X}) \text{ for all } \mu(k) \text{ supported on }\mathcal{X}$$
	where $\mu(k)=(\mu_1,\hdots,\mu_k)$. In particular, for any $k\geq 2$, 
	$$c_{\alpha,\mathcal{D}_0} \leq \left(\sum_{i=1}^k \sqrt{a_i}\right) \frac{k-1}{k^2}C(\mathcal{X})\sqrt{-8\ln{(\alpha/4)}}=: c_\alpha(\mathcal{X})$$
	For equal sample sizes $n_1=\cdots=n_k$, this gives
	$$c_{\alpha,\mathcal{D}_0} \leq  \frac{k-1}{k^{(k+1)/2}}C(\mathcal{X})\sqrt{-8\ln{(\alpha/4)}} \leq \frac{1}{\sqrt{8}}C(\mathcal{X})\sqrt{-8\ln{(\alpha/4)}}$$
	for all $k \geq 2$.
\end{prop}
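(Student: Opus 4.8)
The plan is to bound the $(1-\alpha)$-quantile of $\mathcal{D}_0$ directly from the explicit description of the null limit in Theorem~\ref{thm:as_dist}(b), using the uniform dual bound of Lemma~\ref{lem:dual_bd_null} to control the feasible set and a Gaussian concentration inequality to control the supremum over it. Recall that $\mathcal{D}_0$ is the law of $X_0=\max_{u\in\Phi^*}\sum_{i=1}^k\sqrt{a_i}\langle u_i,G_i\rangle$, where under $H_0$ the feasible set is $\Phi^*=\{u:\sum_{i=1}^k u_i=0,\ A'u\le c\}$ and $G_i\overset{\text{indep.}}{\sim}N(0,\Sigma_1)$ with $\Sigma_1=\operatorname{diag}(\mu_1)-\mu_1\mu_1'$. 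Write $r:=\tfrac{k-1}{k^2}C(\mathcal{X})$, so that Lemma~\ref{lem:dual_bd_null} says exactly that every suitably normalized dual optimal solution has $\|u_i\|\le r$.

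The first step is to bring this bound to bear uniformly inside the maximum defining $X_0$. Given a feasible $u$, set $v_i:=u_i-(u_i)_1\mathbf{1}$; then $(v_i)_1=0$, the equality $\sum_i v_i=0$ is preserved (since $\sum_i(u_i)_1=(\sum_i u_i)_1=0$), and the block-constant correction lies in the kernel of $A'$ (applying $A'$ to a vector whose $i$-th block is $c_i\mathbf{1}$ gives $(\sum_i c_i)\mathbf{1}$, here with $\sum_i c_i=-\sum_i(u_i)_1=0$), so $A'v=A'u\le c$ and $v$ is again a dual optimal solution meeting the hypotheses of Lemma~\ref{lem:dual_bd_null}; hence $\|v_i\|\le r$ for every $i$. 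Moreover $\langle\mathbf{1},G_i\rangle=0$ almost surely, since $\mathbf{1}'\Sigma_1\mathbf{1}=1-1=0$, so replacing $u$ by $v$ leaves $\sum_i\sqrt{a_i}\langle u_i,G_i\rangle$ unchanged. Thus the maximum defining $X_0$ may be taken over $u\in\Phi^*$ with $\|u_i\|\le r$ for all $i$, and Cauchy--Schwarz gives
\[
X_0\ \le\ \sum_{i=1}^k\sqrt{a_i}\,\Bigl(\max_u\|u_i\|\Bigr)\|G_i\|\ \le\ r\sum_{i=1}^k\sqrt{a_i}\,\|G_i\|\ =:\ r\,S .
\]

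Next I would control the quantiles of $S$ by Gaussian concentration. Since $\mathbb{E}\|G_i\|^2=\operatorname{tr}\Sigma_1=1-\|\mu_1\|^2\le 1$, Jensen gives $\mathbb{E}S\le\sum_i\sqrt{a_i}$. Writing $G_i=\Sigma_1^{1/2}\xi_i$ with $\xi_i$ i.i.d.\ $N(0,I_N)$, the map $\xi\mapsto\|\Sigma_1^{1/2}\xi\|$ is $\lambda_{\max}(\Sigma_1)^{1/2}$-Lipschitz (the Euclidean norm composed with a linear map of that operator norm), and $\lambda_{\max}(\Sigma_1)\le\operatorname{tr}\Sigma_1\le 1$, so $(\xi_1,\dots,\xi_k)\mapsto S$ is Lipschitz with constant at most $(\sum_i a_i)^{1/2}\le\sum_i\sqrt{a_i}$. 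The concentration inequality for Lipschitz functions of a standard Gaussian vector then gives $\mathbb{P}\bigl(S\ge\sum_i\sqrt{a_i}+t\bigr)\le\exp\!\bigl(-t^2/(2(\sum_i\sqrt{a_i})^2)\bigr)$, hence $\mathbb{P}\bigl(X_0\ge r(\sum_i\sqrt{a_i})(1+s)\bigr)\le e^{-s^2/2}$ for $s\ge 0$. Choosing $s=\sqrt{2\ln(1/\alpha)}$ makes the right-hand side $\alpha$, so $c_{\alpha,\mathcal{D}_0}\le r\bigl(\sum_i\sqrt{a_i}\bigr)\bigl(1+\sqrt{2\ln(1/\alpha)}\bigr)$.

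To finish, I would use the elementary inequality $1+\sqrt{2\ln(1/\alpha)}\le\sqrt{-8\ln(\alpha/4)}$: squaring the left side and applying AM--GM ($2\sqrt{2\ln(1/\alpha)}\le 1+2\ln(1/\alpha)$) bounds it by $2+4\ln(1/\alpha)\le 8\ln 4+8\ln(1/\alpha)=-8\ln(\alpha/4)$. This gives $c_{\alpha,\mathcal{D}_0}\le\bigl(\sum_i\sqrt{a_i}\bigr)\tfrac{k-1}{k^2}C(\mathcal{X})\sqrt{-8\ln(\alpha/4)}$, a quantity independent of the measures $\mu_1,\dots,\mu_k$ themselves, which may therefore be taken as $c_\alpha(\mathcal{X})$. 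For equal sample sizes $\lambda_i=1/k$, so $a_i=k^{1-k}$, $\sum_i\sqrt{a_i}=k^{(3-k)/2}$, and $\bigl(\sum_i\sqrt{a_i}\bigr)\tfrac{k-1}{k^2}=\tfrac{k-1}{k^{(k+1)/2}}$; this equals $1/\sqrt{8}$ at $k=2$ and for $k\ge 3$ is at most $\tfrac{k-1}{k^2}\le\tfrac29<\tfrac1{\sqrt8}$, yielding the last displayed chain. The only genuinely delicate step is the normalization argument of the second paragraph, which allows the uniform dual-norm bound to be pulled inside the maximum without altering the random objective; everything after the domination $X_0\le rS$ is Cauchy--Schwarz together with standard Gaussian concentration and elementary algebra.
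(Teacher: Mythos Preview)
Your proof is correct and follows the same overall outline as the paper's: bound the null objective by Cauchy--Schwarz and the dual-norm estimate of Lemma~\ref{lem:dual_bd_null}, then control the resulting Gaussian functional by a concentration inequality. The two arguments diverge in how the concentration is carried out. The paper reduces to a single norm $\|G_1\|$ and invokes Ledoux's inequality $\mathbb{P}(\|G_1\|\ge t)\le 4e^{-t^2/8}$, but the passage from $\sum_i\sqrt{a_i}\|G_i\|$ to $(\sum_i\sqrt{a_i})\|G_1\|$ is written as a pointwise inequality even though the $G_i$ are independent, not equal. Your route avoids this by treating $S=\sum_i\sqrt{a_i}\|G_i\|$ directly via Lipschitz concentration on the underlying standard Gaussian, and then recovers the stated constant through the elementary inequality $1+\sqrt{2\ln(1/\alpha)}\le\sqrt{-8\ln(\alpha/4)}$. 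You also make explicit something the paper leaves implicit: Lemma~\ref{lem:dual_bd_null} requires the normalization $(u_i)_1=0$, and you verify that this normalization can be imposed inside the maximum because the correction lies in $\ker A'$, preserves $\sum_i u_i=0$, and is orthogonal to every $G_i$ almost surely. In short, the strategy is the paper's, but your concentration step and your handling of the normalization are more careful.
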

\begin{proof}
	For any given $\mu(k)$, consider the null distribution $\mathcal{D}_0$ given by linear program \eqref{eq:limdist_null_expression} and bound its objective (everywhere) as 
	\begin{equation*}
		\begin{aligned}
			\sum_{i=1}^k \sqrt{a_i}\langle u_i,G_i  \rangle  \leq  \sum_{i=1}^k \sqrt{a_i} \left| \langle u_i, G_i  \rangle\right| &  \leq \sum_{i=1}^k \sqrt{a_i} \| u_i\| \|G_i\| \\
			&\underset{\text{Lemma } \ref{lem:dual_bd_null}}{\leq} \left(\sum_{i=1}^k \sqrt{a_i}\right)\frac{k-1}{k^2}C(\mathcal{X})\|G_1 \|
		\end{aligned}
	\end{equation*}
	Thus the optimal value $X_0$ of \eqref{eq:limdist_null_expression} is also bounded by the same quantity. 
	
	Desired cut-off $c_\alpha(\mathcal{X})$ is obtained using the cut-off $t_\alpha$ for the distribution of $\|G_1\|$. To define $t_\alpha$, we use the following concentration result from \cite{Ledoux}:
	\vspace{0.3 cm}
	\par{\textbf{Concentration of $\|G_1 \|$}(\textbf{Equation (3.5) from \cite{Ledoux}})} For a centered Gaussian random variable $G_1$, given any $t>0$,
	\begin{equation}\label{eq:large_dev}
		\mathbb{P}\left( \|G_1 \| \geq t  \right) \leq 4 e^{-\frac{t^2}{8 \mathbb{E}\| G_1\|^2}}
	\end{equation}
	\vspace{0.2 cm}
	
	Recalling that $G_1 = ({G_1}_1,\hdots,{G_1}_N)$ with $\text{Cov}(G_1) = \text{diag}(\mu_1) - \mu_1\mu_1'$ where $\mu_1 = (p_1,\hdots,p_N)$, we get that
	\begin{equation*}
		\begin{aligned}
			\mathbb{E}\| G_1\|^2 &= \mathbb{E}\left[ ({G_1}_1)^2 + \cdots + ({G_1}_N)^2  \right] \\
			& = p_1(1-p_1) + \cdots + p_N(1-p_N) \\
			& = 1 - \sum_{i=1}^N p_i^2 < 1
		\end{aligned}
	\end{equation*}
	This gives that $4 e^{-\frac{t^2}{8 \mathbb{E}\| G_1\|^2}} \leq  4 e^{-\frac{t^2}{8}}$, which allows to bound the probability in (\ref{eq:large_dev}) as 
	$$\mathbb{P}\left( \|G_1 \| \geq t  \right) \leq 4 e^{-\frac{t^2}{8}}$$
	With $t=\sqrt{-8\ln{(\alpha/4)}}$, this results in  $\mathbb{P}\left( \|G_1 \| \geq t  \right) \leq \alpha$. Note that the result holds for any $\mu_1$. 
	
	So we let $t_\alpha:=\sqrt{-8\ln{(\alpha/4)}}$, which ensures $\mathbb{P}\left( \|G_1 \| \geq t_\alpha  \right) \leq \alpha$. With this choice, we indeed get that
	\begin{equation*}
		\begin{aligned}
			&\mathbb{P}\left(  X_0 \geq \left[\sum_{i=1}^k \sqrt{a_i}\right]\frac{k-1}{k^2}C(\mathcal{X})  \cdot t_\alpha \right) \\
			\leq &\mathbb{P}\left( \left[\sum_{i=1}^k \sqrt{a_i}\right]\frac{k-1}{k^2}C(\mathcal{X}) \| G_1\| \geq \left[\sum_{i=1}^k \sqrt{a_i}\right]\frac{k-1}{k^2}C(\mathcal{X})  \cdot t_\alpha \right) \\
			=&  \mathbb{P}\left( \|G_1 \| \geq t_\alpha  \right) \leq \alpha 
		\end{aligned}
	\end{equation*}
	Define $c_\alpha(\mathcal{X}):=\left[\sum_{i=1}^k \sqrt{a_i}\right]\frac{k-1}{k^2}C(\mathcal{X})  \cdot t_\alpha = \left[\sum_{i=1}^k \sqrt{a_i}\right]\frac{k-1}{k^2}C(\mathcal{X})\sqrt{-8\ln{(\alpha/4)}}$. 
	By the above, we have $\mathbb{P}\left(X_0 \geq c_\alpha(\mathcal{X})\right) \leq \alpha$, and hence the original test cut-off satisfies $c_{\alpha,\mathcal{D}_0} \leq c_\alpha(\mathcal{X})$, which holds for any $\mathcal{D}_0$.
	
\end{proof}

\subsection{Consistency and power}\label{subsec:power}

\begin{figure}
	\includegraphics[width=0.8\textwidth]{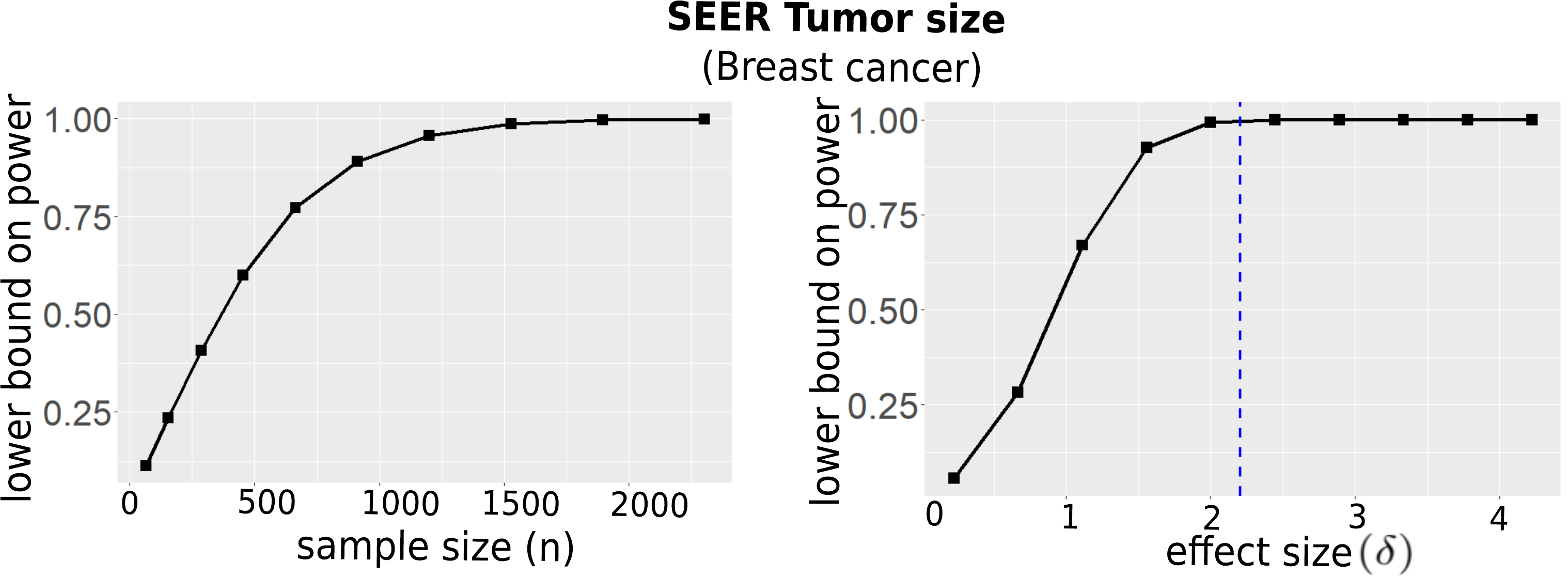}
	\caption{Illustration of theoretical power guarantees for $k=2$ case  (Theorem \ref{thm:unif_consistency_F} and Corollary \ref{coro:power}) for real data settings described in Section \ref{subsec:real_1D}. The test of $H_0$ compares distributions of tumor size for two groups of patients (marked ``alive, no metastases" and ``dead" in Figure \ref{fig:sizes_1D}B). Left: The effect size (Wasserstein squared distance $\delta$) is fixed at the value calculated from the data (blue line in the right panel), and the lower bound on power is shown as a function of $n$. Right: The sample size ($n$) was fixed at high value (actual sample size in the dataset is even larger), and power is shown as a function of $\delta$. }
	\label{fig:power_real}
\end{figure}

We start by showing the basic requirement for tests comparing $k\geq 2$ measures - consistency under any fixed alternative - by showing that the power tends to $1$ with increasing sample sizes. 

The power of the test  \eqref{eq:test} is
\begin{equation}
	\begin{aligned}[b]
		\text{Power}(\mu) & = \mathbb{P}_{\mathcal{D}_a}\left(\phi_{n,\mu}=1   \right) \\ 
		& = \mathbb{P}_{\mathcal{D}_a}\left(\rho_n(MOT(\widehat{\mu}_n) - 0) \geq c_{\alpha,\mathcal{D}_0} \right) \\
		& = \mathbb{P}_{\mathcal{D}_a} \left( \rho_n(MOT(\widehat{\mu}_n) - MOT(\mu)) \geq c_{\alpha,\mathcal{D}_0} - \rho_n MOT(\mu) \right)  \\
		& = \mathbb{P}\left( X_a \geq c_{\alpha,\mathcal{D}_0} - \rho_n MOT(\mu)  \right) 
	\end{aligned}
	\label{eq:power}
\end{equation}
\begin{remark} To obtain expression of the power for the test in \cite{Sommerfeld}, $MOT(\mu)$ in the above expression can be replaced by $W_2^2(\mu)$\footnote{This choice corresponds to $p=2$-Wasserstein distance in \cite{Sommerfeld}. Other choices of $p \in [1,\infty)$ can be used by adjusting the details accordingly.}, with necessary adjustments for the $k=2$ measures case. 
\end{remark}

Proposition \ref{prop:consistency_fixed} below proves consistency under fixed alternatives for any $k\geq 2$. The proof utilizes a Normal Lower Bound guaranteed by Theorem \ref{thm:as_dist}(c) to lower bound the power of the test.
\begin{prop}[\textbf{Consistency under fixed alternatives, $k\geq 2$ measures}]\label{prop:consistency_fixed}
	Under any given alternative $\mu=(\mu_1,\hdots,\mu_k)$, $k\geq 2$, the test in \eqref{eq:test} satisfies
	$$\lim_{n \to \infty}\mathbb{P}(\phi_{n,\mu} = 1) = 1$$
\end{prop}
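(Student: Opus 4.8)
The plan is to argue directly from the power identity \eqref{eq:power}: its last line writes the power as $\mathbb{P}(\phi_{n,\mu}=1)=\mathbb{P}\big(\rho_n(MOT(\widehat{\mu}_n)-MOT(\mu))\geq t_n\big)$ with the moving threshold $t_n:=c_{\alpha,\mathcal{D}_0}-\rho_n MOT(\mu)$, and to show that (i) $t_n\to-\infty$ while (ii) the centered statistic $\rho_n(MOT(\widehat{\mu}_n)-MOT(\mu))$ stays uniformly tight; a short sandwiching argument then gives the conclusion.

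First I would verify $t_n\to-\infty$. Under the fixed alternative, $MOT(\mu)>0$ strictly: by the characterization recalled in Section~\ref{sec:formulation}, $MOT(\mu)=0$ exactly when all $k$ measures coincide, and $MOT\geq0$ always, so $H_a$ forces $MOT(\mu)>0$. With $n_i/(n_1+\cdots+n_k)\to\lambda_i\in(0,1)$, the normalization $\rho_n=\sqrt{n_1\cdots n_k}/(\sqrt{n_1+\cdots+n_k})^{k-1}$ satisfies $\rho_n\sim\sqrt{\lambda_1\cdots\lambda_k}\,(n_1+\cdots+n_k)^{1/2}\to\infty$, so $\rho_n MOT(\mu)\to\infty$. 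The cut-off stays bounded: Proposition~\ref{prop:c_alpha_bd} gives $c_{\alpha,\mathcal{D}_0}\leq c_\alpha(\mathcal{X})$, a constant depending only on $\mathcal{X}$ and not on $n$ — which also removes any concern that the bootstrap-based cut-off is itself data-dependent. Hence $t_n\leq c_\alpha(\mathcal{X})-\rho_n MOT(\mu)\to-\infty$.

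Next I would record the tightness. Theorem~\ref{thm:as_dist}(a) gives $\rho_n(MOT(\widehat{\mu}_n)-MOT(\mu))\xrightarrow{d}X_a$, a proper real-valued random variable, so the sequence is uniformly tight. Concretely, for each $\varepsilon>0$ one can choose $M=M(\varepsilon)>0$ with $\mathbb{P}(X_a>-M)\geq1-\varepsilon$; a clean way to produce such an $M$ uniformly in the (possibly non-Gaussian) shape of $X_a$ is to invoke the Normal lower bound of Theorem~\ref{thm:as_dist}(c), $NLB_{u^*}\leq X_a$ everywhere for some (possibly degenerate) Gaussian $NLB_{u^*}$, whence $\mathbb{P}(X_a>-M)\geq\mathbb{P}(NLB_{u^*}>-M)\to1$ as $M\to\infty$ — this is why the hint points to part (c). The Portmanteau theorem applied to the open half-line $(-M,\infty)$ then yields $\liminf_n\mathbb{P}\big(\rho_n(MOT(\widehat{\mu}_n)-MOT(\mu))>-M\big)\geq\mathbb{P}(X_a>-M)\geq1-\varepsilon$.

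To finish, I would fix $\varepsilon>0$ and the associated $M$, note that $t_n\leq-M$ for all large $n$ by the first step, and use monotonicity: $\mathbb{P}(\phi_{n,\mu}=1)\geq\mathbb{P}\big(\rho_n(MOT(\widehat{\mu}_n)-MOT(\mu))>-M\big)$, whose $\liminf$ is at least $1-\varepsilon$ by the second step; since $\varepsilon$ was arbitrary, $\lim_n\mathbb{P}(\phi_{n,\mu}=1)=1$. The hard part is really just bookkeeping around the moving, data-dependent threshold $t_n$: one cannot apply weak convergence to a fixed event, so it is essential first to dominate $t_n$ by a fixed constant $-M$ using Proposition~\ref{prop:c_alpha_bd} together with $\rho_n\to\infty$, after which the argument is routine. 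I note in passing that the same reasoning proves consistency of the two-sample test of \cite{Sommerfeld} upon replacing $MOT(\mu)$ by $W_2^2(\mu)$ and using the analogous cut-off bound.
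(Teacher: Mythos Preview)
Your proof is correct and follows essentially the same approach as the paper: both argue that the threshold $c_{\alpha,\mathcal{D}_0}-\rho_n MOT(\mu)\to-\infty$ while the centered statistic is controlled (the paper via the Normal lower bound $NLB_{u^*}\leq X_a$ from Theorem~\ref{thm:as_dist}(c), you via tightness from weak convergence, with the NLB mentioned as an alternative). The minor differences are that you work at the finite-$n$ level (line~3 of \eqref{eq:power}) and pass to the limit explicitly via Portmanteau, and that you invoke Proposition~\ref{prop:c_alpha_bd} to bound the cut-off; the paper instead passes directly to the asymptotic identity $\mathbb{P}(X_a\geq\cdots)$ (line~4 of \eqref{eq:power}) and simply asserts that $c_{\alpha,\mathcal{D}_0}$ does not change with $n$ --- your version is a bit more careful on both points, but the substance is the same.
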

\begin{proof}
	Given an alternative $\mu$,  consider a set of dual optimal solutions $\Phi^*$ \eqref{eq:dualsol} to $MOT(\mu)$. Choose any $u^* \in \Phi^*$ (treated fixed after the choice), and consider the Normal Lower Bound $NLB_{u^*} \sim \mathcal{N}\left(0,\sum_{i=1}^k a_i{u_i^*}'\Sigma_i u_i^*\right)$ guaranteed by Theorem \ref{thm:as_dist}(c). Using \eqref{eq:power}, we have that
	\begin{equation*}
		\begin{aligned}
			\mathbb{P}_{\mathcal{D}_a}\left(\phi_{n,\mu}=1   \right) & = \mathbb{P}\left( X_a \geq c_{\alpha,\mathcal{D}_0} - \rho_n MOT(\mu)  \right)\\
			& \geq \mathbb{P}\left( NLB_{u^*} \geq c_{\alpha,\mathcal{D}_0} - \rho_n MOT(\mu)\right)
		\end{aligned}
	\end{equation*}
	Since $\rho_n \to \infty$ while the test's cut-off $c_{\alpha,\mathcal{D}_0}$ and the true value $MOT(\mu)>0$ do not change with $n$, we get that $c_{\alpha,\mathcal{D}_0} - \rho_n MOT(\mu) \to -\infty$, and hence the Gaussian random variable $NLB_{u^*}$ exceeds it with probability tending to $1$ as $n \to \infty$. 
\end{proof}
Next, we prove uniform consistency of the test \eqref{eq:test} over a broad class of alternatives. We start with a case of $k=2$ measures in Theorem \ref{thm:unif_consistency_F}, which proves uniform consistency of the test proposed by \cite{Sommerfeld}. We then move to general $k\geq 2$ ($k$ is allowed to change) in Proposition \ref{prop:unif_consist_FK}, concluding uniform consistency of tests of this type.

Our results are proved under the following assumption (B1) below, which is discussed in Remark \ref{rmk:slackness}. This assumption is expected to hold when measures are not too far from each other, and the power without this assumption is expected to be higher. Removing (B1) poses the difficulty of bounding dual solutions $u$ 
uniformly over alternative polytopes $\{\mu'u=MOT(\mu), A'u \leq c \}$; we use such bound to control alternative variances. Under (B1), the condition $\sum_{i=1}^k u_i = 0$ allows to bound $u$ (and hence alternative variances) explicitly and uniformly over $\mu$.  
\begin{assumption}[B1]\label{assn:dual_opposite} There exist dual solutions $u$ to $MOT(\mu)$ satisfying $\sum_{i=1}^k u_i = 0$.
\end{assumption}

For any fixed metric space $\mathcal{X}$ with $N$ points and any $\delta > 0$, define the class of alternatives
$$ \mathcal{F}(\delta):=\{\mu \text{ on } \mathcal{X}: W^2_2(\mu) \geq \delta \}$$

\begin{theorem}[\textbf{Uniform consistency, $k=2$ measures}]\label{thm:unif_consistency_F}
	For $k=2$, the test \eqref{eq:test} (or, equivalently, the test based on Theorem 1(c) of \cite{Sommerfeld}) satisfies
	$$\lim_{n \to \infty} \inf_{\mu \in \mathcal{F}(\delta) \cap (B1)} \mathbb{P}(\phi_{n,\mu} = 1) = 1$$
\end{theorem}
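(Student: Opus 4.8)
The plan is to lower-bound $\mathbb{P}(\phi_{n,\mu}=1)$ uniformly over $\mu \in \mathcal{F}(\delta) \cap (B1)$ by combining three ingredients: the uniform upper bound on the test cut-off from Proposition 2.11, a uniform lower bound on the effect size $\rho_n MOT(\mu) = \rho_n W_2^2(\mu)$ coming from membership in $\mathcal{F}(\delta)$, and a Normal Lower Bound $NLB_{u^*}$ on $X_a$ from Theorem 2.7(c) whose variance is controlled uniformly over $\mu$ by Lemma 2.5 under assumption (B1). First I would start from the power identity \eqref{eq:power}, which for $k=2$ reads $\mathbb{P}(\phi_{n,\mu}=1) = \mathbb{P}(X_a \geq c_{\alpha,\mathcal{D}_0} - \rho_n MOT(\mu))$, and replace $X_a$ by $NLB_{u^*}$ using Theorem 2.7(c), so that $\mathbb{P}(\phi_{n,\mu}=1) \geq \mathbb{P}(NLB_{u^*} \geq c_{\alpha,\mathcal{D}_0} - \rho_n MOT(\mu))$. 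Here I must choose, for each $\mu \in (B1)$, a dual optimal $u^*$ satisfying $\sum_i u_i^* = 0$ with first coordinates normalized to zero, so that Lemma 2.5 applies.

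Next I would make each piece uniform. For the cut-off: Proposition 2.11 gives $c_{\alpha,\mathcal{D}_0} \leq c_\alpha(\mathcal{X})$, a constant depending only on $\mathcal{X}$ (in particular not on $\mu$). For the $NLB$ variance: writing $\sigma_\mu^2 := \sum_{i=1}^2 a_i (u_i^*)'\Sigma_i u_i^*$, the bound $\|u_i^*\| \leq \frac{k-1}{k^2} C(\mathcal{X})$ from Lemma 2.5 together with $\|\Sigma_i\|_{\mathrm{op}} \leq 1$ (since $\Sigma_i = \mathrm{diag}(\mu_i) - \mu_i\mu_i'$ has operator norm at most $1$) yields $\sigma_\mu^2 \leq \sigma_{\max}^2(\mathcal{X})$, a constant depending only on $\mathcal{X}$. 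For the effect size: $\mu \in \mathcal{F}(\delta)$ means $W_2^2(\mu) \geq \delta$, so $\rho_n MOT(\mu) \geq \rho_n \delta$ (using that for $k=2$ the $MOT$ value equals $W_2^2(\mu)$). Therefore, uniformly over $\mu \in \mathcal{F}(\delta) \cap (B1)$,
\begin{equation*}
	\mathbb{P}(\phi_{n,\mu}=1) \geq \mathbb{P}\big(NLB_{u^*} \geq c_\alpha(\mathcal{X}) - \rho_n \delta\big) \geq \Phi\!\left(\frac{\rho_n \delta - c_\alpha(\mathcal{X})}{\sigma_{\max}(\mathcal{X})}\right),
\end{equation*}
where $\Phi$ is the standard normal CDF and I have used that a centered Gaussian with variance $\leq \sigma_{\max}^2$ exceeds a threshold $t$ with probability at least $\Phi(-t/\sigma_{\max})$ when $t$ can be negative (more carefully: $\mathbb{P}(NLB_{u^*} \geq t) = \Phi(-t/\sigma_\mu) \geq \Phi(-t/\sigma_{\max})$ when $t \leq 0$, and $\geq \Phi(-t/\sigma_{\max})$ also for $t>0$ since $x\mapsto \Phi(-t/x)$ is increasing in $x>0$ for fixed $t>0$). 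Since $\delta > 0$ is fixed and $\rho_n = \sqrt{n_1 n_2/(n_1+n_2)} \to \infty$, the argument $(\rho_n\delta - c_\alpha(\mathcal{X}))/\sigma_{\max}(\mathcal{X}) \to +\infty$, so the right-hand side tends to $1$; taking the infimum over $\mu$ before the limit and then $n \to \infty$ gives the claim.

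The main obstacle is the uniform control of the $NLB$ variance, i.e.\ making sure Lemma 2.5's bound on $\|u_i^*\|$ is genuinely available for \emph{every} $\mu \in \mathcal{F}(\delta) \cap (B1)$: this is exactly why assumption (B1) is imposed, since it guarantees the existence of a dual optimizer with $\sum_i u_i^* = 0$ to which the lemma applies, and without (B1) one would instead need to bound dual solutions uniformly over the family of alternative dual polytopes $\{u : \mu'u = MOT(\mu),\ A'u \leq c\}$, which is the difficulty flagged before the statement. A secondary point to handle carefully is the measurability/selection of $u^*$ as a function of $\mu$ — but since we only need the existence of \emph{some} valid lower bound for each fixed $\mu$ and then take an infimum of probabilities, no measurable selection is actually required; the bound above holds pointwise in $\mu$ with the same constants $c_\alpha(\mathcal{X}), \sigma_{\max}(\mathcal{X})$, and that suffices.
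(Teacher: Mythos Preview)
Your approach is essentially identical to the paper's: both combine Proposition~\ref{prop:c_alpha_bd} for the cut-off, the Normal Lower Bound of Theorem~\ref{thm:as_dist}(c) for $X_a$, and Lemma~\ref{lem:dual_bd_null} under (B1) together with an operator-norm bound on $\Sigma_i$ to control the NLB variance uniformly, then let $\rho_n \to \infty$ drive the threshold to $-\infty$.

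Two minor slips to correct. First, for $k=2$ the cost \eqref{eq:cost} gives $MOT(\mu) = \tfrac{1}{4}W_2^2(\mu)$, not $W_2^2(\mu)$, so the effect-size bound is $\rho_n MOT(\mu) \geq \rho_n\delta/4$; this only changes constants. Second, your monotonicity claim for $t>0$ is backwards: if $t>0$ and $\sigma_\mu \leq \sigma_{\max}$ then $\Phi(-t/\sigma_\mu) \leq \Phi(-t/\sigma_{\max})$, so the displayed inequality fails for small $n$. The paper sidesteps this by first observing that for $n$ large enough (depending only on $\delta$ and $\mathcal{X}$) the threshold $c_\alpha(\mathcal{X}) - \rho_n\delta/4$ is negative, and only then applies the variance comparison---you should do the same, since the limit statement only requires the bound for large $n$.
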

\begin{proof}
	Fix test level $\alpha \in (0,1)$. The goal is to show that, independently of the nature of the alternative $\mu \in \mathcal{F}(\delta)\cap(B1)$, the probability that the test rejects $H_0$ given by 
	$$\mathbb{P}(\phi_{n,\mu} = 1) = \mathbb{P}\left( X_a \geq c_{\alpha,\mathcal{D}_0} - \rho_n \frac{1}{4} W_2^2(\mu)  \right)$$
	tends to $1$ as $n\to \infty$ (the factor of $\frac{1}{4}$ is due to $MOT(\mu) = \frac{1}{4}W_2^2(\mu)$ for $k=2$).  
	
	Assume for simplicity of notation that the sample sizes are equal, i.e.  $n_1=\cdots=n_k$\footnote{The proof for unequal samples sizes would be similar by considering $n:=\min_i\{n_i\}$.}. Note first that the null cut-offs $c_{\alpha,\mathcal{D}_0}$ can be bounded above uniformly over the class $\mathcal{F}(\delta)\cap(B1)$ using Proposition \ref{prop:c_alpha_bd}, which with $k=2$ gives the bound
	$$c_{\alpha,\mathcal{D}_0} \leq \frac{1}{\sqrt{8}}C(\mathcal{X})\sqrt{-8\ln(\alpha/4)}:= c_\alpha(\mathcal{X})$$
	Hence, 
	$$c_{\alpha,\mathcal{D}_0} - \rho_n \frac{1}{4}W_2^2(\mu) \leq c_\alpha(\mathcal{X}) - \rho_n \frac{\delta}{4} \text{ for all } \mu \in \mathcal{F}(\delta)\cap(B1)$$
	This expression represents the ``worst" (over $\mathcal{F}(\delta)\cap(B1)$) value that any given $X_a$ must exceed to give the test a power.
	
	Next we show that any $X_a$ will exceed this bound with probability tending to $1$ as $n \to \infty$. To this end, let $\mu \in \mathcal{F}(\delta)\cap(B1)$, and consider the corresponding alternative distribution of  $X_a$. Consider the dual solutions $({u_1}_a^*, {u_2}_a^*)$ satisfying Assumption (B1) and the corresponding Normal Lower Bound 
	$$NLB_{u_a^*} \sim \mathcal{N}\left(0,\frac{1}{2}{{u_1}_a^*}'\Sigma_1 {{u_1}_a^*} + \frac{1}{2}{{u_2}_a^*}'\Sigma_2{{u_2}_a^*}\right)$$
	guaranteed by Theorem \ref{thm:as_dist}(c). Note that its variance 
	\begin{equation*}
		\begin{aligned}
			\frac{1}{2}{{u_1}_a^*}'\Sigma_1 {u_1}_a^* + \frac{1}{2}{{u_2}_a^*}' \Sigma_2 {u_2}_a^* \leq \frac{1}{2} & \| {u_1}_a^*\|^2 \cdot \lambda_{max}(\Sigma_1) + \frac{1}{2}\|{u_2}_a^* \|^2 \cdot \lambda_{max}(\Sigma_2) \\
			\underset{(B1)}{=} & \frac{1}{2}\| {u_1}_a^*\|^2 \left( \lambda_{max}(\Sigma_1) + \lambda_{max}(\Sigma_2)  \right)
		\end{aligned}
	\end{equation*}
	where $\lambda_{max}(\cdot)$ denotes the largest eigenvalue of a matrix argument. 
	
	Using Theorem 1 of \cite{Benasseni}, the eigenvalues of $\Sigma_1,\Sigma_2$ are upper bounded by entries of $\mu_1$ and $\mu_2$ as $\lambda_{max}(\Sigma_1)\leq \max_i({\mu_1})_i$ and $\lambda_{max}(\Sigma_2)\leq \max_i ({\mu_2})_i$, with the uniform upper bound of $1$ for all instances $\mu \in \mathcal{F}(\delta)$\footnote{In fact, the bound hold for any $\mu$ and is not restricted to the class $\mathcal{F}$ - see \cite{Benasseni}.}. Hence,
	$$\lambda_{max}(\Sigma_1) + \lambda_{max}(\Sigma_2) \leq 2$$
	providing a uniform upper bound on the eigenvalue part. Further, $\| {u_1}_a^* \| \leq \frac{1}{4}C(\mathcal{X})$ by Lemma \ref{lem:dual_bd_null}. Thus, letting 
	\begin{equation}\label{eq:sigma_bd}
		\sigma^2(\mathcal{X}):=\left(\frac{1}{4}C(\mathcal{X})\right)^2
	\end{equation}
	uniformly bounds the variances for all $NLB$'s chosen as above for $X_a$'s arising from $\mu \in \mathcal{F}(\delta)\cap (B1)$.  
	
	The final step is to combine the above uniform bounding arguments to get the power $\to 1$. Note that for large enough $n$, $c_\alpha(\mathcal{X}) - \rho_n \frac{1}{4}\delta < 0$, and also that $c_\alpha(\mathcal{X}) - \rho_n \frac{1}{4}\delta \to -\infty$ as $n \to \infty$. Hence, we have that, for any $\mu \in \mathcal{F}(\delta)\cap(B1)$, for large enough $n$ depending only on $\delta$ and $\mathcal{X}$ but not the nature of $\mu$,
	\begin{equation}
		\begin{aligned}[b]
			\mathbb{P}(\phi_{n,\mu} = 1) &= \mathbb{P}\left( X_a \geq c_{\alpha,\mathcal{D}_0} - \rho_n \frac{1}{4}W_2^2(\mu)  \right)\\ 
			& \geq \mathbb{P}\left( X_a \geq c_\alpha(\mathcal{X}) - \rho_n \frac{\delta}{4} \right) \\
			& \geq \mathbb{P}\left(NLB_{u^*_a} \geq c_\alpha(\mathcal{X}) - \rho_n \frac{\delta}{4}  \right) \\
			& \geq \mathbb{P}\left( \mathcal{N}\left(0, \sigma^2(\mathcal{X})\right) \geq c_\alpha(\mathcal{X}) - \rho_n \frac{\delta}{4}  \right)
		\end{aligned}
		\label{eq:NLB_bd_F1}
	\end{equation}
	which tends to $1$ with $c_\alpha(\mathcal{X}) - \rho_n \frac{\delta}{4} \to -\infty$ as $n\to \infty$. This gives the uniform lower bound on the power over $\mathcal{F}(\delta)\cap(B1)$ proving the uniform consistency of the test over this broad class of alternatives. 
\end{proof}

Using Theorem \ref{thm:unif_consistency_F}, one can provide a practical lower bound on the power as a function of the sample size $n$ and/or the effect size $\delta$ when measures are supported on a known metric space $\mathcal{X}$. Note that the dual bound $C(\mathcal{X})$ is rather conservative; it can be replaced by a bound on $\| u_1\|$ computed for the polytope $\{u_1+u_2 = 0, A'u \leq c \}$ in every specific case. To find these bounds, one could solve linear programs $\{ \max/\min u^i: u_1+u_2 = 0, A'u \leq c \}$ for each entry $u^i$ of $u$ to estimate magnitudes of the dual variables over a given polytope. Denoting the resulting bound $\widetilde{C}(\mathcal{X})$, we have

\begin{coro}[\textbf{Lower bound on the power of the two-sample test}]\label{coro:power} For any alternative $\mu \in \mathcal{F}(\delta) \cap (B1)$, the two-sample test \eqref{eq:test} (or the test based on Theorem 1(c) of \cite{Sommerfeld}) with equal sample sizes $n$ has
	$$\text{power} \geq 1-\Phi\left(\frac{4 \widetilde{C}(\mathcal{X}) \sqrt{-\log(\alpha/4)}) - \frac{\sqrt{n}}{\sqrt{2}}\delta}{\widetilde{C}(\mathcal{X})}   \right)$$
	where $\Phi(\cdot)$ denotes the cumulative distribution function of the standard Normal distribution.
\end{coro}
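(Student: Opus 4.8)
The plan is to make the chain of inequalities \eqref{eq:NLB_bd_F1} from the proof of Theorem \ref{thm:unif_consistency_F} completely explicit in the two-sample case with equal sample sizes, replacing there the conservative dual bound $\tfrac{k-1}{k^{2}}C(\mathcal X)=\tfrac14 C(\mathcal X)$ by the sharper, polytope-specific bound $\widetilde C(\mathcal X)$ on $\|u_1\|$, and then reading off the resulting Gaussian tail probability in closed form.

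First I would note that the first three inequalities of \eqref{eq:NLB_bd_F1} are valid for every $n$: for $\mu\in\mathcal F(\delta)\cap(B1)$ we have $c_{\alpha,\mathcal D_0}\le c_\alpha(\mathcal X)$ by Proposition \ref{prop:c_alpha_bd}, $W_2^2(\mu)\ge\delta$ by definition of $\mathcal F(\delta)$, and $MOT(\mu)=\tfrac14 W_2^2(\mu)$ for $k=2$, so the event $\{X_a\ge c_{\alpha,\mathcal D_0}-\rho_n MOT(\mu)\}$ contains the event $\{X_a\ge c_\alpha(\mathcal X)-\tfrac14\rho_n\delta\}$; applying the Normal Lower Bound $NLB_{u^*_a}\le X_a$ from Theorem \ref{thm:as_dist}(c) together with the uniform variance estimate then gives $\mathbb P(\phi_{n,\mu}=1)\ge \mathbb P\bigl(\mathcal N(0,\sigma^2(\mathcal X))\ge c_\alpha(\mathcal X)-\tfrac14\rho_n\delta\bigr)$. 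Next I would substitute the explicit constants in this regime: with $n_1=n_2=n$ one has $\rho_n=\sqrt{n/2}$ and $a_1=a_2=\tfrac12$, so $\sum_i\sqrt{a_i}=\sqrt2$; re-running the proof of Proposition \ref{prop:c_alpha_bd} with $\|u_i^*\|\le\widetilde C(\mathcal X)$ in place of $\tfrac14 C(\mathcal X)$ gives $c_\alpha(\mathcal X)=\sqrt2\,\widetilde C(\mathcal X)\sqrt{-8\ln(\alpha/4)}=4\widetilde C(\mathcal X)\sqrt{-\log(\alpha/4)}$, and the variance computation in \eqref{eq:NLB_bd_F1}, which uses $\lambda_{max}(\Sigma_1)+\lambda_{max}(\Sigma_2)\le2$ (via \cite{Benasseni}), $(B1)$, and $\|u_1^*\|\le\widetilde C(\mathcal X)$, gives $\sigma^2(\mathcal X)\le\widetilde C(\mathcal X)^2$. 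Standardizing the Gaussian via $\mathbb P(\mathcal N(0,\sigma^2(\mathcal X))\ge s)=1-\Phi\bigl(s/\widetilde C(\mathcal X)\bigr)$ and inserting these values of $c_\alpha(\mathcal X)$, $\sigma(\mathcal X)$, $\rho_n$ and the uniform lower bound on $\rho_n MOT(\mu)$ over the class then yields the displayed bound; it is informative precisely when the argument of $\Phi$ is nonpositive, i.e.\ for $n$ large relative to $\widetilde C(\mathcal X)^2\bigl(-\log(\alpha/4)\bigr)/\delta^2$.

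The only step requiring real care — more bookkeeping than genuine difficulty — is justifying that the single quantity $\widetilde C(\mathcal X)$ may replace $C(\mathcal X)$ \emph{simultaneously} in the null-cutoff bound and in the alternative-variance bound, uniformly over $\mu\in\mathcal F(\delta)\cap(B1)$. The key point is that the feasible region $\{u:u_1+u_2=0,\ A'u\le c\}$ of the defining linear programs $\max/\min u^i$ depends only on $\mathcal X$ (the constraint matrix $A$ and the cost $c$ are fixed once $\mathcal X$ is) and that, under the normalization of Lemma \ref{lem:dual_bd_null}, every dual optimum of $MOT(\mu)$ satisfying $(B1)$ — for every alternative in the class — lies in this region; hence $\|u_1^*\|\le\widetilde C(\mathcal X)$ holds uniformly over the class, exactly as the cruder bound $\tfrac14 C(\mathcal X)$ did inside the proof of Theorem \ref{thm:unif_consistency_F}, and $\widetilde C(\mathcal X)<\infty$ because Lemma \ref{lem:dual_bd_null} shows this region is bounded after normalization. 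Given this, the rest is the routine substitution above.
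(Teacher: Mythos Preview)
Your proposal is correct and takes essentially the same approach as the paper: the paper does not give a separate proof of the corollary but simply states that it follows from Theorem \ref{thm:unif_consistency_F} by replacing the crude dual bound $\tfrac14 C(\mathcal X)$ with the polytope-specific bound $\widetilde C(\mathcal X)$ on $\|u_1\|$, and your proposal carries out exactly this substitution in the chain \eqref{eq:NLB_bd_F1} together with the explicit $k=2$, equal-sample-size constants. Your closing remark that $\widetilde C(\mathcal X)$ legitimately replaces $C(\mathcal X)$ uniformly over the class because the defining polytope $\{u_1+u_2=0,\ A'u\le c\}$ depends only on $\mathcal X$ is precisely the justification the paper gestures at in the paragraph preceding the corollary.
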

Illustration of this bound on the real data from Section \ref{subsec:real_1D} is provided in Figure \ref{fig:power_real}.

Using techniques similar to the proof of Theorem \ref{thm:unif_consistency_F}, it is possible to prove uniform consistency of the test \eqref{eq:test} for alternatives $\mu(k):=(\mu_1,\hdots, \mu_k)$ in the class 
\begin{equation*}
	\mathcal{F}_K(\delta):=\{\mu(k) \text{ on } \mathcal{X}:  k\leq K, \text{ }MOT(\mu)\geq \delta \}
\end{equation*}
defined for any $\delta>0$ and any fixed $2 \leq K < \infty$ under the Assumption (B1). This gives consistency of the test \eqref{eq:test} uniformly over alternatives with $k\leq K$ measures:
\begin{prop}[\textbf{Uniform consistency in the class $\mathcal{F}_K(\delta)$}]\label{prop:unif_consist_FK} The test in \eqref{eq:test} satisfies
	$$\lim_{n \to \infty} \inf_{\mu(k) \in \mathcal{F}_K(\delta) \cap (B1)} \mathbb{P}(\phi_{n,\mu} = 1) = 1$$
\end{prop}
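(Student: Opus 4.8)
The plan is to transcribe the proof of Theorem~\ref{thm:unif_consistency_F} to general $k$, tracking the dependence on $k$ at every estimate. I would assume equal sample sizes $n_1=\cdots=n_k=n$ (the unequal case follows by passing to $n:=\min_i n_i$), so that $\rho_n=\sqrt n/k^{(k-1)/2}$, $a_i=k^{1-k}$, and $\sum_{i=1}^k a_i=k^{2-k}\le 1$ for every $k\ge2$. Starting from the power identity \eqref{eq:power}, for any $\mu(k)\in\mathcal{F}_K(\delta)\cap(B1)$,
\[
\mathbb{P}(\phi_{n,\mu}=1)=\mathbb{P}\!\left(X_a\ge c_{\alpha,\mathcal{D}_0}-\rho_n\,MOT(\mu)\right).
\]
Proposition~\ref{prop:c_alpha_bd} bounds $c_{\alpha,\mathcal{D}_0}$, uniformly over $\mu$ and over all $k\ge2$, by $c_\alpha(\mathcal{X})=\tfrac{1}{\sqrt 8}C(\mathcal{X})\sqrt{-8\ln(\alpha/4)}$, while $MOT(\mu)\ge\delta$ by membership in $\mathcal{F}_K(\delta)$; hence $c_{\alpha,\mathcal{D}_0}-\rho_n\,MOT(\mu)\le c_\alpha(\mathcal{X})-\rho_n\delta$ for every $\mu(k)$ in the class.

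Next I would lower bound $X_a$ by a Normal Lower Bound whose variance is controlled uniformly in $k$. Invoking (B1), choose $u^*=(u_1^*,\dots,u_k^*)\in\Phi^*$ with $\sum_{i=1}^k u_i^*=0$ and first entries normalized to zero; Theorem~\ref{thm:as_dist}(c) supplies $NLB_{u^*}\le X_a$ everywhere with $NLB_{u^*}\sim\mathcal{N}\!\bigl(0,\sum_{i=1}^k a_i{u_i^*}'\Sigma_i u_i^*\bigr)$. By Lemma~\ref{lem:dual_bd_null}, $\|u_i^*\|\le\frac{k-1}{k^2}C(\mathcal{X})\le\frac14 C(\mathcal{X})$ for all $k\ge2$ (since $k\mapsto\frac{k-1}{k^2}$ is decreasing on $k\ge2$), and by Theorem~1 of \cite{Benasseni}, $\lambda_{max}(\Sigma_i)\le\max_j(\mu_i)_j\le1$. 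Combining these with $\sum_{i=1}^k a_i\le1$ and positive semidefiniteness of $\Sigma_i$ gives
\[
\sum_{i=1}^k a_i{u_i^*}'\Sigma_i u_i^*\ \le\ \Bigl(\tfrac14 C(\mathcal{X})\Bigr)^2\sum_{i=1}^k a_i\ \le\ \sigma^2(\mathcal{X}):=\Bigl(\tfrac14 C(\mathcal{X})\Bigr)^2,
\]
a bound independent of $\mu$ and of $k$. Since $v\mapsto\mathbb{P}(\mathcal{N}(0,v)\ge t)$ is nonincreasing for $t<0$, it follows, for $n$ large enough that $c_\alpha(\mathcal{X})-\rho_n\delta<0$, that
\[
\mathbb{P}(\phi_{n,\mu}=1)\ \ge\ \mathbb{P}\!\left(NLB_{u^*}\ge c_\alpha(\mathcal{X})-\rho_n\delta\right)\ \ge\ \mathbb{P}\!\left(\mathcal{N}(0,\sigma^2(\mathcal{X}))\ge c_\alpha(\mathcal{X})-\rho_n\delta\right).
\]

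Finally I would push the right-hand side to $1$ uniformly over the class, and this is the only step where finiteness of $K$ enters. Because $\rho_n=\sqrt n/k^{(k-1)/2}\ge\sqrt n/K^{(K-1)/2}$ for all $k\le K$, we have $c_\alpha(\mathcal{X})-\rho_n\delta\le c_\alpha(\mathcal{X})-\sqrt n\,\delta/K^{(K-1)/2}\to-\infty$ at a rate not depending on $\mu$ or $k$; hence the last probability tends to $1$, and so does the infimum over $\mathcal{F}_K(\delta)\cap(B1)$. The main point to be careful about — and it is a mild one — is exactly this: every other ingredient (the cut-off bound, the dual-norm bound, the eigenvalue bound, the variance bound) is already uniform over all $k\ge2$, but allowing $k$ to vary could in principle let the rate $\rho_n$ degenerate, and boundedness of $k$ by $K$ is what prevents that. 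The unequal-sample-size case is identical after replacing $n$ by $\min_i n_i$ throughout.
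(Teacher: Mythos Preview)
Your proof is correct and follows essentially the same route as the paper: bound the null cut-off uniformly via Proposition~\ref{prop:c_alpha_bd}, lower bound $X_a$ by the Normal Lower Bound from Theorem~\ref{thm:as_dist}(c), control its variance uniformly in $k$ via Lemma~\ref{lem:dual_bd_null} and the eigenvalue bound, and then use $k\le K$ to keep $\rho_n$ from degenerating. The paper computes the exact $k$-dependent variance bound $\sigma^2(\mathcal{X},k)=\tfrac{(k-1)^2}{k^{k+2}}C(\mathcal{X})^2$ and observes it is decreasing in $k$, whereas you bound the two factors $\tfrac{k-1}{k^2}\le\tfrac14$ and $\sum_i a_i\le 1$ separately—both arrive at the same uniform bound $\sigma^2(\mathcal{X})=(\tfrac14 C(\mathcal{X}))^2$, and your explicit remark that $v\mapsto\mathbb{P}(\mathcal{N}(0,v)\ge t)$ is nonincreasing for $t<0$ is a welcome clarification the paper leaves implicit.
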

\begin{proof}
	
	Similarly to the proof of Theorem \ref{thm:unif_consistency_F}, the null cut-offs are uniformly bounded using Proposition \ref{prop:c_alpha_bd} as $c_{\alpha,\mathcal{D}_0} \leq c_\alpha(\mathcal{X})$. Recall that (taking equal sample sizes $n$ for simplicity) for any $2 \leq k \leq K$, $\rho_n = \sqrt{\frac{n}{k^{k-1}}}$, and hence $c_\alpha - \rho_n MOT(\mu(k)) < 0$ for $n$ large enough to ensure this holds for all $k\leq K$.
	
	Similarly to the proof of Theorem \ref{thm:unif_consistency_F}, each alternative random variable $X_a$ arising from $\mu(k)$ has a Normal Lower Bound 
	$$NLB_{u_a^*} \sim \mathcal{N}\left(0,\frac{1}{k^{k-1}}{{u_1}_a^*}'\Sigma_1 {{u_1}_a^*} + \cdots + \frac{1}{k^{k-1}}{{u_k}_a^*}'\Sigma_k{{u_k}_a^*}\right)$$
	with the variance bounded above by 
	\begin{equation}\label{eq:sigsq}
		\sigma^2(\mathcal{X},k):=\frac{1}{k^{k-1}}\cdot\left( \frac{k-1}{k^2} C(\mathcal{X})\right)^2 \cdot k = \frac{(k-1)^2}{k^{k+2}}\left(C(\mathcal{X})\right)^2
	\end{equation}
	which decreases with $k$. Hence, it is bounded uniformly in $k$ by $\sigma^2(\mathcal{X})$ from the $k=2$ case (equation \eqref{eq:sigma_bd}), and the rest of the argument carries in exactly the same way as in the proof of Theorem \ref{thm:unif_consistency_F}.
\end{proof}

\begin{remark}[Large $k$ and connection with \cite{Kim}]\label{rmk:uniform_consistency_change_K}
	In practice, the upper bound on the number $k$ of measures in $\mathcal{F}_K(\delta)$ cannot be too large. This is due to the requirement $\rho_n=\sqrt{\frac{n}{k^{k-1}}} \to \infty$, which forces very large sample sizes that may not be practically plausible. While this limitation is natural for asymptotic $k$-sample tests that work with fixed $k$ (as discussed, e.g., in  \cite{Zhan}), recent results of \cite{Kim} show that permutation approach for certain test statistics allows for growing $k$ and $n$ simultaneously. More precisely, in the class of alternatives where only a few measures differ from the rest of the collection, the permutation kernel based test is uniformly powerful if the population version of the test statistic $\delta$ sufficiently exceeds $\sqrt{\frac{\log k}{n}}$. Below we discuss sequences of alternatives whose $MOT$ population value does not decrease with $k$, and hence the $MOT$ test statistic is expected to perform well in a permutation procedure. We leave a theoretical power analysis concerning $MOT$ permutation test for future work. 
\end{remark}

The ``clustered" alternatives are collections $\mu=(\mu_1,\hdots,\mu_k)$ that separate into two groups (or ``clusters"), with $k/C$ measures in each cluster that are all the same. Such situation might arise, for example, if an applied treatment causes $C$ different types of responses. For instance, for $C=2$,  define
\begin{equation*}
	\begin{aligned}\mathcal{F}^2_k:=\{\mu \text{ on } \mathcal{X}: \mu_1=\cdots=\mu_{k/2}\neq \mu_{k/2+1} =\cdots=\mu_k\}
	\end{aligned}
\end{equation*}
(see Figure \ref{fig:synth_3D}B for illustration). The classes $\mathcal{F}^C_k$ with $C=3,\cdots,k$ are defined analogously (in each case, $k$ is assumed to be divisible by $C$). 

\begin{lemma}[\textbf{MOT values for ``clustered" alternatives}]\label{lem:clustered_alt} For $\mu \in \mathcal{F}^2_k$, we have $MOT(\mu) = MOT(\mu_1,\mu_k) = \frac{1}{4} W_2^2(\mu_1,\mu_k)$. More generally, for $\mu \in \mathcal{F}^C_k$, $MOT(\mu) = MOT(\{\mu_i\}_{i=1}^C)$, where $\{\mu_i\}_{i=1}^C$ is a collection consisting of one measure from each cluster. 
\end{lemma}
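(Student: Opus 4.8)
The plan is to route everything through the Wasserstein barycenter reformulation \eqref{eq:bary_general}: with the cost \eqref{eq:cost}, $MOT(\mu_1,\dots,\mu_k)$ is the optimal value of $\inf_{\nu\in\mathcal{P}^2(\mathbb{R}^d)}\frac1k\sum_{i=1}^k W_2^2(\mu_i,\nu)$, i.e. the uniform-weight barycenter problem. For $\mu\in\mathcal{F}^C_k$ let $\tilde\mu_1,\dots,\tilde\mu_C$ be the $C$ distinct measures, each appearing as $\mu_i$ for exactly $k/C$ indices $i$ (here we use that $k$ is divisible by $C$). Substituting into the barycenter objective and collecting the $k/C$ equal summands within each cluster gives, for every $\nu$,
$$\frac1k\sum_{i=1}^k W_2^2(\mu_i,\nu)=\frac1k\sum_{c=1}^C\frac{k}{C}\,W_2^2(\tilde\mu_c,\nu)=\frac1C\sum_{c=1}^C W_2^2(\tilde\mu_c,\nu),$$
which is exactly the uniform-weight barycenter objective of the $C$ measures $\tilde\mu_1,\dots,\tilde\mu_C$. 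Since the two sides are the same function of $\nu$ and the infimum is over the same set $\mathcal{P}^2(\mathbb{R}^d)$, the two infima coincide; applying the barycenter--$MOT$ equivalence once more to the right-hand side identifies this common optimal value with $MOT(\tilde\mu_1,\dots,\tilde\mu_C)=MOT(\{\mu_i\}_{i=1}^C)$, establishing the general ($C$-cluster) statement.

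For $C=2$ the two clusters are represented by $\mu_1$ and $\mu_k$, so the above yields $MOT(\mu)=MOT(\mu_1,\mu_k)$, a $2$-marginal $MOT$ value. It remains to match this with $\tfrac14 W_2^2(\mu_1,\mu_k)$. A direct computation (compare Observation \ref{obs:cost_dist}) shows that for $k=2$ the cost \eqref{eq:cost} at a pair $(x_i,x_j)$ equals $\tfrac12\big(\|x_i-\tfrac{x_i+x_j}{2}\|^2+\|x_j-\tfrac{x_i+x_j}{2}\|^2\big)=\tfrac14\|x_i-x_j\|^2$; hence the linear program \eqref{eq:mot_primal} with two marginals has cost vector equal to $\tfrac14$ times the cost vector of the $W_2^2$ program \eqref{eq:wass_dist}, over the identical feasible set $\mathcal{C}(\mu_1,\mu_k)$. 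Optimal values therefore satisfy $MOT(\mu_1,\mu_k)=\tfrac14 W_2^2(\mu_1,\mu_k)$, completing the proof.

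I do not expect a genuine obstacle here: the content is simply that duplicating marginals with equal multiplicities rescales the (uniform) barycenter weights, and because every cluster has the common size $k/C$ the rescaled weights are again uniform. The only points that need a line of care are that the barycenter equivalence is an equivalence of optimal \emph{values} (which is precisely what the lemma claims, so nothing about non-uniqueness of barycenter solutions intrudes), and that the divisibility of $k$ by $C$ built into $\mathcal{F}^C_k$ is what makes the collapsed weights uniform rather than merely proportional to the cluster sizes. One could instead argue entirely at the level of multicouplings --- lifting an optimal $C$-marginal coupling to a $k$-marginal one by duplicating coordinates, and projecting back --- but the barycenter route above is shorter and is the one naturally suggested by the setup of Section \ref{sec:formulation}.
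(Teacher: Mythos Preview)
Your proof is correct and follows essentially the same route as the paper: both arguments pass to the barycenter formulation \eqref{eq:bary_general}, collapse the $k/C$ repeated summands in each cluster to reduce the uniform-weight $k$-marginal barycenter objective to the uniform-weight $C$-marginal one, and invoke the $MOT$--barycenter equivalence again. Your write-up is in fact slightly more detailed than the paper's (you spell out the $\tfrac14$ factor via the cost identity, whereas the paper simply asserts $MOT(\mu_1,\mu_k)=\tfrac14 W_2^2(\mu_1,\mu_k)$), but the underlying idea is identical.
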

Proof is provided in Appendix \ref{app:proof_clustered_alt}. Note that true $MOT$ values for ``clustered" alternatives do not decrease with increasing number of measures and thus may serve as a suitable test statistics in permutation tests against alternatives in $\mathcal{F}^C_k$.

Finally, we comment on the $MOT$ values in a ``sparse" alternative class when only one measure is different from the rest (alternatives of this type are considered in both \cite{Kim} and \cite{Zhan}:
$$\mathcal{F}^s_{k}:=\{\mu \text{ on } \mathcal{X}: \mu_1=\cdots=\mu_{k-1}\neq \mu_k\}$$
While $MOT$ values do decrease with $k$ in this sequences of alternatives, we can state precisely how the rate of this decrease is controlled  (proved in Appendix \ref{app:proof_sparse_alt}):

\begin{lemma}[\textbf{MOT values for ``sparse" alternatives}]\label{lem:sparse_alt} For $\mu \in \mathcal{F}^s_k$, $MOT(\mu) = \frac{k-1}{k^2}$ $W_2^2(\mu_1,\mu_k)$.
\end{lemma}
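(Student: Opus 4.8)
The plan is to compute $MOT(\mu)$ directly from the barycenter formulation \eqref{eq:bary_general} for an alternative $\mu = (\mu_1,\dots,\mu_{k-1},\mu_k) \in \mathcal{F}^s_k$ with $\mu_1 = \cdots = \mu_{k-1} =: \nu$ and $\mu_k$ possibly different. By the equivalence of \eqref{eq:mot_general} and \eqref{eq:bary_general}, we have
\begin{equation*}
  MOT(\mu) = \inf_{\lambda \in \mathcal{P}^2(\mathbb{R}^d)} \frac{1}{k}\left( (k-1) W_2^2(\nu, \lambda) + W_2^2(\mu_k, \lambda) \right).
\end{equation*}
First I would argue that the optimal barycenter $\lambda^*$ must lie on the (generalized) geodesic between $\nu$ and $\mu_k$: indeed, the functional $\lambda \mapsto (k-1)W_2^2(\nu,\lambda) + W_2^2(\mu_k,\lambda)$ is minimized by the weighted barycenter of the two measures $\nu,\mu_k$ with weights $\tfrac{k-1}{k}$ and $\tfrac{1}{k}$, which is exactly the point at parameter $t = 1/k$ along the McCann interpolation from $\nu$ to $\mu_k$ (here one uses that $W_2^2$ is, along a geodesic, a quadratic-type function whose weighted sum is minimized at the corresponding weighted average of the endpoint parameters — this is the standard two-marginal barycenter fact, e.g. Proposition 3.1.2 / the $k=2$ weighted case in \cite{Panaretos}, or \cite{Agueh}).

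Second, I would evaluate the objective at that minimizer. If $\gamma$ is the optimal coupling between $\nu$ and $\mu_k$ achieving $W_2^2(\nu,\mu_k)$, then $\lambda^* = (T_{1/k})_\# \gamma$ where $T_t(x,y) = (1-t)x + ty$, and by the geodesic property $W_2^2(\nu,\lambda^*) = t^2 W_2^2(\nu,\mu_k)$ and $W_2^2(\mu_k,\lambda^*) = (1-t)^2 W_2^2(\nu,\mu_k)$ with $t = 1/k$. Substituting,
\begin{equation*}
  MOT(\mu) = \frac{1}{k}\left( (k-1)\cdot\frac{1}{k^2} + \left(1-\frac{1}{k}\right)^2 \right) W_2^2(\nu,\mu_k) = \frac{1}{k}\cdot\frac{(k-1) + (k-1)^2}{k^2}\, W_2^2(\nu,\mu_k) = \frac{k-1}{k^2}\, W_2^2(\mu_1,\mu_k),
\end{equation*}
using $(k-1) + (k-1)^2 = (k-1)k$ and $\nu = \mu_1$. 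This gives the claimed identity.

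The main obstacle is justifying rigorously that the optimal barycenter of a $(k-1)$-fold repeated measure $\nu$ and a single copy of $\mu_k$ coincides with the weighted two-point barycenter with weights $(\tfrac{k-1}{k},\tfrac{1}{k})$, and that the values $W_2^2(\nu,\lambda^*)$, $W_2^2(\mu_k,\lambda^*)$ have the clean quadratic form in $t$. In the finitely-supported setting this can be done cleanly: one can instead work on the $MOT$ side, showing that an optimal multicoupling $\pi^*$ is obtained by lifting the optimal two-marginal coupling $\gamma$ between $\mu_1$ and $\mu_k$ to the "diagonal in the first $k-1$ coordinates" coupling, i.e. $\pi^*_{i_1,\dots,i_k}$ is supported on tuples with $i_1 = \cdots = i_{k-1}$ and has the $(i_1, i_k)$-entry equal to $\gamma_{i_1 i_k}$; then $\langle c, \pi^* \rangle$ computes directly, using Observation \ref{obs:cost_dist} (the cost on a tuple with $k-1$ coinciding entries equals $\tfrac{k-1}{k^2}\|x_{i_1} - x_{i_k}\|^2$), to $\tfrac{k-1}{k^2}\sum_{i,j}\gamma_{ij}\|x_i - x_j\|^2 = \tfrac{k-1}{k^2} W_2^2(\mu_1,\mu_k)$. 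Optimality of this $\pi^*$ (hence that this is an upper bound that is also attained) follows by exhibiting matching dual variables, or alternatively by the lower bound $MOT(\mu) \geq \tfrac{k-1}{k^2}W_2^2(\mu_1,\mu_k)$ obtained from any feasible multicoupling by projecting to the $(1,k)$ marginals and applying the pointwise inequality $c(x_1,\dots,x_k) \geq \tfrac{k-1}{k^2}\|x_1 - x_k\|^2$ together with... — actually the cleanest route is: for any multicoupling $\pi$ with marginals $\mu_1=\cdots=\mu_{k-1}$ and $\mu_k$, the barycenter bound gives $\langle c,\pi\rangle \geq \tfrac1k\inf_\lambda((k-1)W_2^2(\mu_1,\lambda) + W_2^2(\mu_k,\lambda))$, and the two-marginal weighted barycenter computation lower-bounds this infimum by $\tfrac{k-1}{k^2}W_2^2(\mu_1,\mu_k)$ (triangle-type inequality for $W_2$ along the optimal direction). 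Combining the two bounds yields equality.
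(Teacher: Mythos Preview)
Your proposal is correct. The primary route you take---reducing to the two-marginal weighted barycenter and reading off the value $t(1-t)W_2^2(\mu_1,\mu_k)$ with $t=1/k$ via McCann interpolation---is genuinely different from the paper's argument. The paper instead works entirely through LP duality: it takes optimal Wasserstein potentials $(u_1^*,u_k^*)$ for $W_2^2(\mu_1,\mu_k)$, proposes $(\tfrac{k-1}{k^2}u_1^*,0,\dots,0,\tfrac{k-1}{k^2}u_k^*)$ as a dual candidate for the $MOT$ program, checks dual feasibility via Observation~\ref{obs:cost_dist} (together with the remark that $c_{i\cdots ij}$ is no larger than cost entries with more distinct indices), and then invokes complementary slackness against exactly the diagonal-lift primal multicoupling you also describe. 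Your barycenter computation is shorter if one is willing to cite the two-point weighted barycenter identity as known (it is standard, e.g.\ from \cite{Agueh}), whereas the paper's argument is fully self-contained within finite-dimensional linear programming and sidesteps any appeal to the geodesic structure of Wasserstein space. Note also that the dual-variable alternative you mention in passing (``exhibiting matching dual variables'') is precisely what the paper does; your primal candidate and the paper's coincide.
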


Empirical performance of asymptotic $MOT$ test \eqref{eq:test} and permutation $MOT$ test (Section \ref{sec:perm}) on ``clustered" and ``sparse" alternatives are illustrated in Figure \ref{fig:synth_3D}.

\section{Sampling from null and alternative distributions}\label{sec:sampling}
\begin{figure}
	\includegraphics[width=0.8\textwidth]{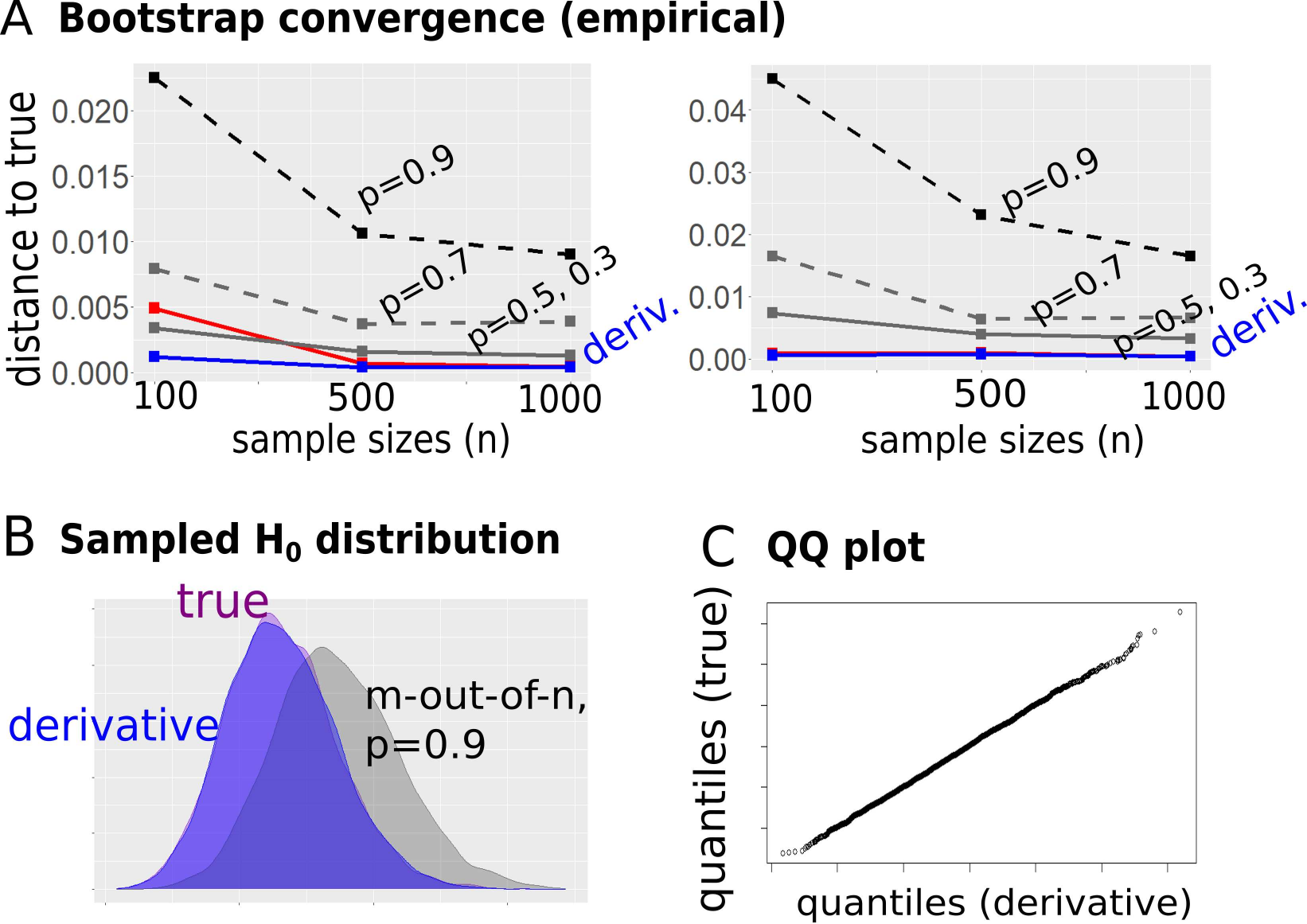}
	\caption{Illustration of bootstrap consistency (Section \ref{sec:boot}). A. Convergence of m-out-of-n and derivative bootstrap sampling distributions (both sampled based on the empirical $\widehat{\mu}_n$) to the true null distribution (sampled based on the true $\mu$) assessed by 1-Wasserstein distance. The m-out-of-n bootstrap schemes are shown with $m:=n^p$, $p \in \{0.3,0.5,0.7,0.9 \}$. Observed convergence rate  is fastest for the derivative bootstrap (blue), and is slower for the m-out-of-n bootstrap for  larger values of $m$. The data is based on the \textbf{3D Experiment} dataset (Figure \ref{fig:synth_3D}) by choosing bottom unit square in $2D$ (Left panel here) and a unit square in $3D$ (Right panel here).   B. Sampling distributions corresponding to panel A, $n=500$. C. Quantile-quantile plot illustrating closeness of the derivative bootstrap to the true distribution from panel B.}
	\label{fig:boot_conv}
\end{figure}

\begin{figure}
	\includegraphics[width=1\textwidth]{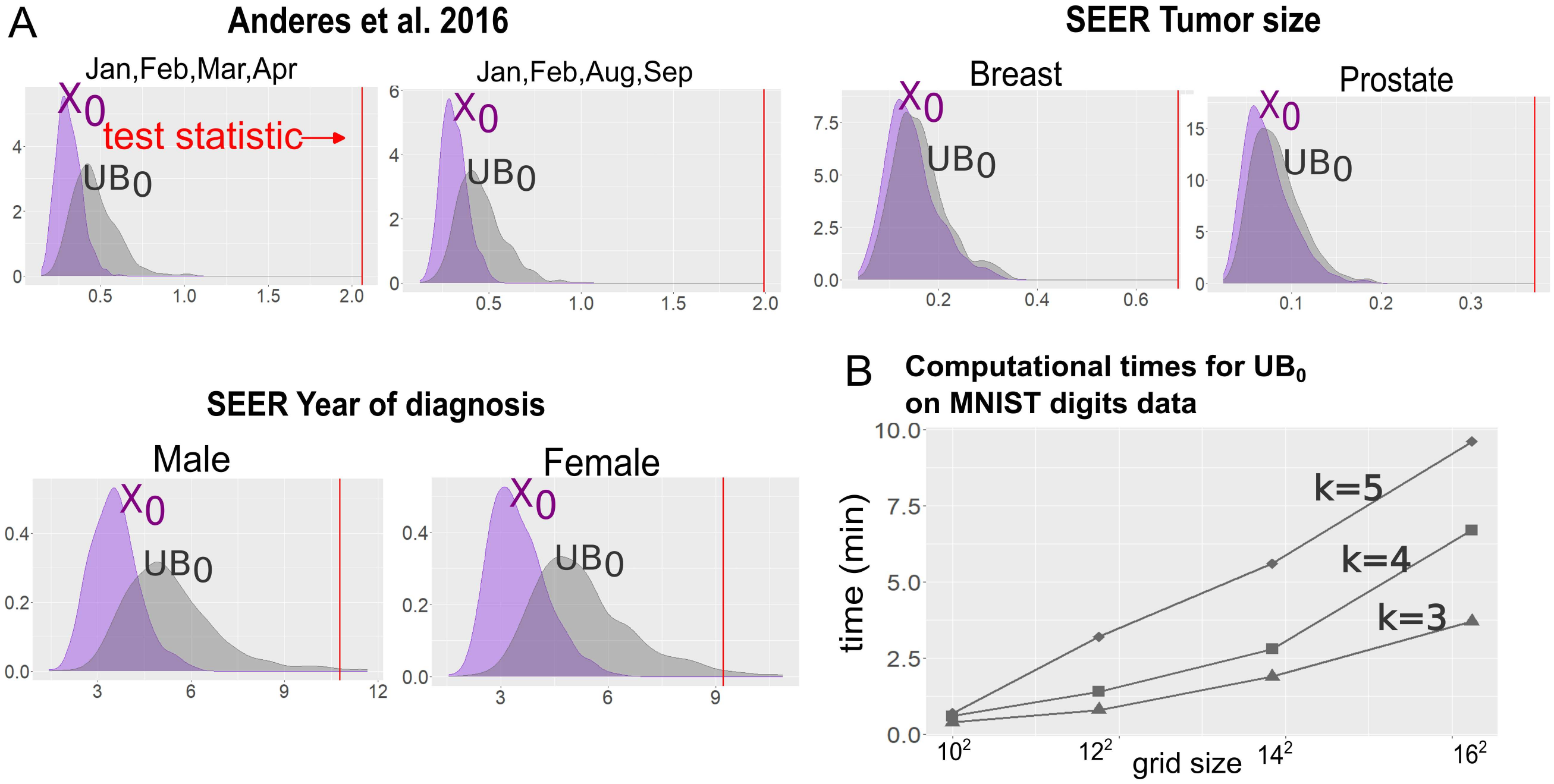}
	\caption{Illustration of performance of the null upper bound $UB_0$ (Section \ref{sec:null_approximation}). A.  $H_0$ testing using upper bound $UB_0$ and the true null distribution $X_0$ for all three real (or real-based) datasets considered in this paper (Section \ref{sec:applications}).  Observe that $UB_0$ produces the same conclusion as $X_0$ (rejection of $H_0$) on all considered datasets, while having much lower computational complexity (Table \ref{table:complexity}). The whole analysis took under 5 minutes on the standard laptop, with negligible fraction of time taken by $UB_0$.  B. Times to compute $500$ samples from $UB_0$ for (subsampled grid) MNIST digits data \cite{MNIST} for $k$ images on a standard laptop. All simulations were conducted on AMD Ryzen 5 7520U with 16 GB RAM.  }
	\label{fig:ub}
\end{figure}

\subsection{Bootstrap: $m$-out-of-$n$ and derivative}\label{sec:boot}
We recall that the limiting laws in Theorem \ref{thm:as_dist} depend on the true measures $\mu$, similarly to the $k=1,2$-sample cases considered in \cite{Sommerfeld} and \cite{Hundrieser}. More precisely, the laws are of the form
$$\rho_n \left(f(\widehat{\mu}_n) - \ f(\mu)  \right) \xrightarrow[]{\text{in law}} f'_\mu(G)$$
where $f: \mu \longrightarrow MOT(\mu)$ is the map with Hadamard directional derivative $f'$ at $\mu$ in the direction of $G \overset{\text{in law}}{=} \lim_n \rho_n \left(\widehat{\mu}_n - \mu \right)$ and $n:=(n_1,\hdots,n_k)$. The classical bootstrap estimator of $f_\mu(G)$ in a sense of \cite{Efron} would be constructed by sampling from the conditional (given the data) law of $\rho_n \left(f(\hat{\mu}_n^*) - f(\widehat{\mu}_n)   \right)$, where $\hat{\mu}_n^*$ is obtained by taking $n$ samples from the vector of empirical measures $\widehat{\mu}_n$. By Theorem 3.1 of \cite{Fang}, this estimator is not consistent when $f'_\mu(G)$ is non-Gaussian, which is always the case under $H_0$ and frequently under $H_a$. 

In place of inconsistent classical bootstrap, \cite{Fang} proposes a consistent  bootstrap procedure to estimate the law of $f'_\mu(G)$. The approach of \cite{Fang} is to ensure consistency of an estimator $f'_n(\cdot)$ of the map $f'_\mu(\cdot)$ uniformly in the argument $(\cdot)$, assuming that the law of the argument $(\cdot)$ is estimated by (some) consistent bootstrap scheme. Two different choices for $f'_n(\cdot)$ then lead to bootstrap schemes frequently termed \textit{m-out-of-n} and \textit{derivative} bootstrap methods, respectively (see Section 1 of \cite{Fang} on historical notes on these methods).  

The work of \cite{Sommerfeld} outlines the consistency results for these two schemes in $k=1,2$-sample cases. For completion, we describe these schemes in the general case of $k\geq 2$ (proved in Appendix \ref{app:proof_boot}):

\begin{prop}[\textbf{Consistency of bootstrap from \cite{Fang}}] \label{prop:boot} 
	The results of parts (a) and (b) concern with two estimators $f'_n(\cdot)$ of the map $f_\mu'(\cdot)$. 
	\begin{itemize}
		\item[(a)] $f'_n: h \longrightarrow f_n'(h)$ given by
		$$f'_n(h) := \frac{f(\widehat{\mu}_n + \varepsilon_n h) - f(\widehat{\mu}_n)}{\varepsilon_n}$$
		composed with the estimator of $G$ given by 
		$$\hat{G}^* := \rho_m\left(\widehat{\mu}^*_{(m)} - \widehat{\mu}_n\right)$$
		results in a consistent bootstrap estimator $f'_n(\hat{G}^*)$ of $f_\mu'(G)$ under both $H_0$ and $H_a$. Here, $\hat{\mu}^*_{(m)}$ is obtained by resampling m out of n observations from $\widehat{\mu}_n$ with $m:=\sqrt{n}$, and $\varepsilon_n \to 0$ such that $\rho_n \varepsilon_n \to \infty$. 
		
		\underline{Note:} The choice $\varepsilon_n := \frac{1}{\rho_m}$ leads to
		$$f'_n (\hat{G}^*) = \rho_m \left(f(\widehat{\mu}^*_{(m)}) - f(\widehat{\mu}_n)  \right)$$
		which is frequently termed the m-out-of-n bootstrap estimator of $f_\mu'(G)$ and is considered in \cite{Sommerfeld} and \cite{Hundrieser} for the Wasserstein distance map $f$. 
		\item[(b)] $f_n': h \longrightarrow f_{\hat{\mu}_n}'(h)$ given by
		\begin{equation*}
			\begin{aligned}
				f'_{\hat{\mu}_n}(h):= & \max_{u} \sum_{i=1}^k \langle u_i, h_i \rangle \\
				& \langle u,\hat{\mu}_n  \rangle = MOT(\hat{\mu}_n) \\
				& A'u \leq c
			\end{aligned}
		\end{equation*}
		composed with the estimator of $G$ given by
		$$\hat{G}^* := \left(\sqrt{\hat{a}_1} \hat{G}_1^*, \hdots,    \sqrt{\hat{a}_k} \hat{G}^*_k\right)$$
		where each $\hat{G}_i^*$ is $N\left(0,\text{diag}(\hat{\mu}^*_1) - \hat{\mu}_1^*(\hat{\mu}_1^*)' \right)$ results in a consistent bootstrap estimator $f_n'(\hat{G}^*)$ of $f_\mu'(G)$ under $H_0$.
		
		\underline{Note:} This estimator is frequently termed the derivative bootstrap estimator of $f'_\mu(G)$ and is considered in \cite{Sommerfeld} for the Wasserstein distance map $f$. 
	\end{itemize}
\end{prop}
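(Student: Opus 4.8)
\textbf{Proof proposal for Proposition \ref{prop:boot}.}
The plan is to derive both statements from the general bootstrap validity theorem for Hadamard directionally differentiable maps of \cite{Fang} (Theorem 3.2 there), whose hypotheses are: (H1) the map $f=MOT$ is Hadamard directionally differentiable at $\mu$; (H2) the bootstrap estimator $\hat G^*$ is conditionally on the data weakly consistent, in probability, for the limit $\sqrt a\,G$ of $\rho_n(\widehat\mu_n-\mu)$; and (H3) the derivative estimator $f'_n$ satisfies $\sup_{h\in K}|f'_n(h)-f'_\mu(h)|\to 0$ in probability for every compact $K$. Under (H1)--(H3), \cite{Fang} gives that $f'_n(\hat G^*)$ converges in law, conditionally on the data and in probability, to $f'_\mu(G)$, which is exactly the asserted consistency since $f'_\mu(G)$ is the limit law in \eqref{eq:limdist} of Theorem \ref{thm:as_dist}. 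Hypothesis (H1) is already established in the proof of Theorem \ref{thm:as_dist}(a) (Step 2), with $f'_\mu(h)=\max_{u\in\Phi^*}\sum_{i=1}^k\langle u_i,h_i\rangle$ and $\Phi^*$ as in \eqref{eq:dualsol}. So the work is to verify (H2) and (H3) for each of the two estimators, exploiting the linear-program structure \eqref{eq:mot_primal} of $MOT$ and the uniform dual bounds of Lemma \ref{lem:dual_bd_null}.

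For part (a), (H2) holds because $\hat G^*=\rho_m(\widehat\mu^*_{(m)}-\widehat\mu_n)$ is the $m$-out-of-$n$ nonparametric bootstrap of the stacked multinomial empirical process on the finite space $\mathcal X$: with $m=\sqrt n$ one has $\rho_m\to\infty$ and $\rho_n/\rho_m\to\infty$, so the classical conditional CLT for the sample mean of i.i.d.\ vectors applies coordinatewise and the rescaling $\rho_m$ reproduces the limit $\sqrt a\,G$, independence across the $k$ samples being preserved by resampling within each sample. For (H3) I would show that $f'_n(h)=\varepsilon_n^{-1}\big(f(\widehat\mu_n+\varepsilon_n h)-f(\widehat\mu_n)\big)$ converges to $f'_\mu(h)$ uniformly on compacts, in probability, combining three ingredients: $\widehat\mu_n\to\mu$ almost surely; the choice $\varepsilon_n\to0$ with $\rho_n\varepsilon_n\to\infty$; and the fact that $\mu\mapsto MOT(\mu)$ is globally Lipschitz and Hadamard directionally differentiable locally uniformly in the base point, both inherited from $MOT$ being the optimal value of the LP \eqref{eq:mot_primal} whose dual optimal vertices are uniformly bounded (Lemma \ref{lem:dual_bd_null}). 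This is the standard uniform-consistency property of the numerical-derivative estimator under the conditions of \cite{Fang}. Invoking Theorem 3.2 of \cite{Fang} then yields consistency of $f'_n(\hat G^*)$ under both $H_0$ and $H_a$, completing (a).

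For part (b) the skeleton is the same, with the analytic plug-in $f'_{\widehat\mu_n}(h)=\max\{\sum_i\langle u_i,h_i\rangle:\langle u,\widehat\mu_n\rangle=MOT(\widehat\mu_n),\ A'u\le c\}$ in place of the numerical derivative and $\hat G^*$ a Gaussian vector with plug-in covariances; (H2) for this $\hat G^*$ is immediate from $\widehat\mu_i^*\to\mu_1$ under $H_0$ and continuity of $\mu_i\mapsto\operatorname{diag}(\mu_i)-\mu_i\mu_i'$. The main obstacle is (H3): one must show the argmax-face functional $h\mapsto\max_{u\in\Phi^*(\mu)}\sum_i\langle u_i,h_i\rangle$ is continuous in the base point at $\mu$, uniformly in $h$ over compacts and in probability for $\mu$ replaced by $\widehat\mu_n$. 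This is precisely where the restriction to $H_0$ enters, and it is genuinely delicate because the argmax face of the fixed polytope $\{A'u\le c\}$ can drop in dimension under a perturbation of its direction. The plan is to control this using that $\{A'u\le c\}$ is independent of $\mu$, that $MOT(\widehat\mu_n)\to 0=MOT(\mu)$, and that under $H_0$ the limiting dual optimal set admits the explicit description $\Phi^*(\mu)=\{u:\sum_i u_i=0,\ A'u\le c\}$ of \eqref{eq:limdist_null_expression}; together with the uniform bound of Lemma \ref{lem:dual_bd_null} and a parametric-linear-program semicontinuity argument — the $k\ge 2$ analogue of the reasoning used for $k=2$ in \cite{Sommerfeld} — this gives the required convergence of $f'_{\widehat\mu_n}$ to $f'_\mu$. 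With (H1)--(H3) in hand, Theorem 3.2 of \cite{Fang} again delivers conditional consistency of $f'_{\widehat\mu_n}(\hat G^*)$ under $H_0$, finishing (b).
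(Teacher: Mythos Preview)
Your overall framework is correct and matches the paper's: both parts are obtained by invoking the Fang/Hong machinery after checking (H1)--(H3), with (H1) already done in Theorem~\ref{thm:as_dist}(a). For part~(a) the paper proceeds essentially as you do, citing Theorem~3.1 of \cite{Hong} for the numerical-derivative estimator and simply checking the rate conditions $\varepsilon_n\to 0$, $\rho_n\varepsilon_n\to\infty$ with $m=n^p$; your explicit verification of (H2) and of the locally-uniform differentiability underlying (H3) is more detailed than what the paper writes but amounts to the same thing.

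The real difference is in part~(b). You correctly flag (H3) for the plug-in estimator $f'_{\widehat\mu_n}$ with data-dependent constraint $\langle u,\widehat\mu_n\rangle = MOT(\widehat\mu_n)$ as the delicate step, and you propose a parametric-LP semicontinuity argument to control how the optimal dual face moves with $\widehat\mu_n$. The paper bypasses this entirely: it observes that under $H_0$ the target derivative $f'_\mu$ has the explicit feasible set $\{\sum_{i=1}^k u_i=0,\ A'u\le c\}$ from \eqref{eq:limdist_null_expression}, which does not involve $\mu$ at all, and then \emph{uses this fixed program as the estimator} (cf.\ Pseudocode~\ref{pseudo:der}, which solves \eqref{eq:limdist_null_expression} directly). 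Since the resulting $f'_n$ is literally equal to $f'_\mu$, Assumption~4 of \cite{Fang} is trivially satisfied and Theorem~3.2 of \cite{Fang} applies immediately. In other words, the paper sidesteps your semicontinuity difficulty by replacing the constraint $\langle u,\widehat\mu_n\rangle=MOT(\widehat\mu_n)$ with its null limit $\sum_i u_i=0$ from the outset; your argument would be the one needed if one insisted on the plug-in form exactly as written in the proposition statement, but that is not what the paper actually proves or implements.
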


Pseudocodes \ref{pseudo:mn} and \ref{pseudo:der} describe sampling from the limiting laws of $MOT$ under $H_0$ and $H_a$ using bootstrap schemes in Proposition \ref{prop:boot}.

\begin{pseudocode}[\textbf{m-out-of-n bootstrap to obtain one sample from $H_0$ or $H_a$ limiting law}] \label{pseudo:mn}
	Given the data $\widehat{\mu}_n$,
	\begin{enumerate}
		\item Let $m_i:=n_i^p$, $p \in (0,1)$.
		\item For each $i=1,\hdots,k$, \\  sample $\widehat{\mu}_i^* \sim \text{Multinomial}(m_i,\widehat{\mu}_1)$ under $H_0$, or \\  sample $\widehat{\mu}_i^* \sim \text{Multinomial}(m_i,\widehat{\mu}_i)$ under $H_a$.
		\item Compute $MOT(\widehat{\mu}_1^*,\hdots,\widehat{\mu}_k^*)$ by solving the program \eqref{eq:mot_primal}.
		\item Report $\rho_m  MOT(\widehat{\mu}_1^*,\hdots,\widehat{\mu}_k^*)$ under $H_0$ or \\ $\rho_m \left(MOT(\widehat{\mu}_1^*,\hdots,\widehat{\mu}_k^*) - MOT(\widehat{\mu}_n) \right)$ under $H_a$, \\
		where $\rho_m=\frac{\sqrt{m_1 \cdot \hdots \cdot m_k}}{\left(\sqrt{m_1 + \hdots + m_k}  \right)^{k-1}}$.
	\end{enumerate}
\end{pseudocode}

\begin{pseudocode}[\textbf{Derivative bootstrap to obtain one sample from $H_0$ limiting law}] \label{pseudo:der}
	Given the data $\widehat{\mu}_n$,
	\begin{enumerate}
		\item Sample $\hat{\mu}_1^* \sim \text{Multinomial}(n_1,\widehat{\mu}_1)$.
		\item For each $i=1,\hdots,k$, \\
		sample $\hat{G}_i^* \sim \text{Normal}(0,\text{diag}(\widehat{\mu}_1^*) - \hat{\mu}_1^* (\hat{\mu}_1^*)')$. \\
		Let $a_i = \prod_{j \neq i} \hat{\lambda}_j$, where $\hat{\lambda}_j = \frac{n_j}{n_1+\hdots+n_k}$.
		\item Solve the program \eqref{eq:limdist_null_expression} with $\{\hat{G}_i^*\}_{i=1}^k$ in place of $\{G_i\}_{i=1}^k$. 
	\end{enumerate}
\end{pseudocode}

\subsubsection{Computational complexity of bootstrap} \label{sec:boot_complexity}
For the m-out-of-n bootstrap, computation in Step 3 requires solving the primal $MOT$ program \eqref{eq:mot_primal}, which is a linear program with $N^k$ variables, i.e. exponentially many in terms of the cardinality of the support space $\mathcal{X}$. By strong duality, the optimal value of primal MOT program is the same as that of the dual MOT \eqref{eq:mot_dual}, which is a linear program 
$$\max_u \langle u,\mu \rangle \text{ s.t. } A'u\leq c$$
with polynomially many variables but exponentially many constraints. It is well-known that a linear program with exponentially many constraints can be proved polynomial time solvable via ellipsoid method provided its feasible set $\{A'u\leq c\}$ has a polynomial time computable separation oracle (see, e.g. Section 8.5 of \cite{Bertsimas}). Such oracle is a procedure which accepts a proposal point $u \in \mathbb{R}^{kN}$ and either confirms that $u \in \{A'u\leq c \}$, or outputs a violated constraint. Polynomial separation oracle is found for the dual $MOT$ problem in \cite{Altschuler} (Proposition 12), resulting in polynomial time algorithm to solve $MOT$ problem with quadratic cost \eqref{eq:cost}  (Theorem 2 of \cite{Altschuler})\footnote{Besides the optimal value which agrees for the  primal and the dual, \cite{Altschuler} are also interested in the primal vertex solution. For that reason, they also discuss how to get primal solution in polynomial time in their Proposition 11.}.

For the derivative bootstrap, computation in Step 3 requires solving  program \eqref{eq:limdist_null_expression}, which similar to the dual MOT linear program and is given by
$$\max_u \langle u, g \rangle \text{ s.t. } A'u \leq c \text{ and } Bu = 0$$
where $B$ is the matrix of the linear map $u \longrightarrow \sum_{i=1}^k u_i$ and $g$ is a realization of $G^*$ (the nature of the coefficient vector $g$ does not affect the complexity). Note that there are only polynomially many constraints in $B$ (namely, $N$ of them); hence, given a polynomial separation oracle for $\{A'u \leq c\}$, the rest of the constraints in $\{ Bu=0\}$ can be checked in polynomial time, giving the following theoretical complexity for result for the derivative bootstrap for $MOT$:
\begin{lemma}[\textbf{Polynomial complexity of derivative bootstrap}]\label{lem:der_boot_compl} The derivative bootstrap linear program (Step 3 in Pseudocode \ref{pseudo:der}) has computational complexity $\text{poly}(N,k,\log U)$, where $\log U$ is an upper bound on the bits of precision used to represent the coefficient vector $G^*$.
\end{lemma}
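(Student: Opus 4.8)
The plan is to write Step 3 of Pseudocode \ref{pseudo:der} as the linear program
\[
\max_{u \in \mathbb{R}^{kN}} \langle u, g\rangle \quad \text{s.t.}\quad A'u \le c,\ Bu = 0,
\]
where $B$ is the $N \times kN$ matrix of the linear map $u \mapsto \sum_{i=1}^k u_i$ and $g$ is a realization of $\hat{G}^*$, and to solve it with the ellipsoid method. Its running time is polynomial in the dimension $kN$, in the encoding length of the objective and the right-hand sides, and in the number of oracle calls, provided a separation oracle for the feasible set runs in polynomial time (see Section 8.5 of \cite{Bertsimas}). The feasible region is the intersection of the dual feasible polytope $\{A'u \le c\}$ — which has $N^k$ facets but lives in $\mathbb{R}^{kN}$ — with the subspace $\{Bu = 0\}$ cut out by only $N$ linear equations.

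First I would assemble the separation oracle. Given a candidate $u$, one checks the $N$ equalities $Bu = 0$ directly in $O(kN)$ arithmetic operations, returning a violated row if any fails; if all hold, one invokes the polynomial-time separation oracle for $\{A'u \le c\}$ from Proposition 12 of \cite{Altschuler}, which either certifies feasibility or returns a violated constraint of $A'u \le c$. Composing the two yields a separation oracle for the whole feasible set running in $\text{poly}(N,k)$ time; here the cost entries $c$ are, by Observation \ref{obs:cost_dist}, bounded quantities determined by $\mathcal{X}$ alone, with bounded encoding length, so they do not affect the polynomial bound.

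Next I would record the geometric quantities that make the ellipsoid-method complexity estimate explicit. By Lemma \ref{lem:dual_bd_null}, after the harmless normalization fixing first coordinates to zero (which is compatible with $Bu = 0$), the feasible set is contained in a ball of radius $R = \text{poly}(N,k)$, so it is bounded and the LP has a finite optimum. Because the feasible region is not full-dimensional — it lies inside $\{Bu = 0\}$ — I would invoke the standard reduction for linear programs with equality constraints: eliminate the $N$ equations to work in a subspace of dimension $k(N-1)$ on which the remaining body is full-dimensional, or equivalently use the sliding-objective / perturbation version of the ellipsoid method that tolerates lower-dimensional feasible sets. The objective $g$ is encoded with $\log U$ bits while $c$ and $0$ have encoding length $\text{poly}(N,k)$, so the number of ellipsoid iterations, each making one oracle call, is $\text{poly}(kN, \log U)$, with every arithmetic operation performed on numbers of polynomially bounded size; combining gives the claimed $\text{poly}(N,k,\log U)$ bound.

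The main obstacle I anticipate is purely the bookkeeping around non-full-dimensionality and bit complexity: one must verify that the normalization and the $N$ equality constraints $Bu = 0$ do not break the hypotheses of the ellipsoid-method complexity theorem (bounded rational feasible set, bounded facet complexity), and that $C(\mathcal{X})$ together with the boundedness of the cost vector from Observation \ref{obs:cost_dist} indeed yield the radius $R$ and facet-size bounds independently of the data except through $\log U$ in $g$. All of this is standard, and no new structural fact about $MOT$ is needed, since the only exponentially large object, $\{A'u \le c\}$, is already handled by the separation oracle of \cite{Altschuler}.
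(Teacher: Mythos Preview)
Your proposal is correct and follows essentially the same route as the paper: formulate Step~3 as the LP $\max_u\langle u,g\rangle$ subject to $A'u\le c$ and $Bu=0$, then solve it via the ellipsoid method using a separation oracle built by first checking the $N$ equalities in $Bu=0$ and then invoking the polynomial-time oracle of \cite{Altschuler} (Proposition~12) for $\{A'u\le c\}$. The paper's proof is in fact terser than yours; your additional bookkeeping on boundedness (via Lemma~\ref{lem:dual_bd_null}) and non-full-dimensionality is reasonable extra care that the paper leaves implicit.
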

Proof details missing from the above discussion are provided in Appendix \ref{app:der_boot_comp}.

Computational complexity of bootstrap methods is summarized in Table \ref{table:complexity}, and consistency of bootstrap is illustrated in Figure \ref{fig:boot_conv}.

\subsection{Fast approximation of the null distribution by  ${UB}_0$} \label{sec:null_approximation}
A fast alternative to the bootstrap sampling from the null distribution is to utilize the lower bound $UB_0$ on the null random variable $X_0$ provided by equation \eqref{eq:UB_0} in Theorem \ref{thm:as_dist}(b). As the proof of the theorem shows, a stochastic upper bound $UB_0$ can be constructed to have $kN(N-1)$ constraints in place of $N^k$ constraints in $X_0$ by exploiting a constraint structure under $H_0$ (see Appendix \ref{app:proof_thm_b} for details). Note that, with only quadratically many constraints, the linear program for $UB_0$ with any realization of the coefficient vector $G$ can be solved fast by modern linear program solvers. 

Sampling from $UB_0$ can be viewed as obtaining an upper bound on the derivative bootstrap sampling distribution of $X_0$, via the following algorithm:

\begin{pseudocode}[\textbf{Sampling $UB_0$}] \label{pseudo:der}
	Given the data $\widehat{\mu}_n$,
	\begin{enumerate}
		\item Sample $\hat{\mu}_1^* \sim \text{Multinomial}(n_1,\widehat{\mu}_1)$.
		\item For each $i=1,\hdots,k$, \\
		sample $\hat{G}_i^* \sim \text{Normal}(0,\text{diag}(\widehat{\mu}_1^*) - \hat{\mu}_1^* (\hat{\mu}_1^*)')$. \\
		Let $a_i = \prod_{j \neq i} \hat{\lambda}_j$, where $\hat{\lambda}_j = \frac{n_j}{n_1+\hdots+n_k}$.
		\item Solve the program \eqref{eq:UB_0} with $\{\hat{G}_i^*\}_{i=1}^k$ in place of $\{G_i\}_{i=1}^k$. 
	\end{enumerate}
\end{pseudocode}

Computational complexity of sampling $UB_0$ is included in Table \ref{table:complexity}. The performance of $UB_0$ for testing $H_0$ on all real datasets considered in the paper is illustrated in Figure \ref{fig:ub}. Note that the low computational complexity of $UB_0$ allows to approximate the $H_0$ distribution on large datasets within a few minutes on a standard laptop.

\subsection{Permutation approach}\label{sec:perm}
An alternative to the asymptotic test \eqref{eq:test} is a permutation test. The permutation approach in k-sample testing is frequently used when the asymptotic distribution is difficult to sample from due to, for example, infinite number of parameters and/or difficulties of their estimation (cases of \cite{Rizzo}, \cite{Huskova}, and \cite{Kim}). Moreover, permutation procedures are applicable when the sample sizes are small (and hence the asymptotic distribution may not be valid), giving exact level $\alpha$ permutation tests (Section 15.2 of \cite{Efron}).  

Permutation test accepts a set of data points with group labels, and randomly permutes the labels to compute test statistic of interest on the permuted data to compare with the original one. The number of random permutations $R$ is usually taken to be between $99$ and $999$ out of total possible large number of permutations (p. 158 of \cite{Davison}). 

The $MOT$ permutation test is described in Pseudocode \ref{pseudocode:perm}. 
\begin{pseudocode}[\textbf{MOT based permutation test}]\label{pseudocode:perm}
	Given the data $\widehat{\mu}_n=(\widehat{\mu}_1,\cdots, \widehat{\mu}_k)$: 
	\begin{enumerate}
		\item  Compute $MOT(\widehat{\mu}_n)$. 
		\item Convert $\widehat{\mu}_n$ to a matrix of support points, where each support point belongs to the $i$th group, $i=1,\hdots,k$, and is repeated according to the counts in $\widehat{\mu}_i$. Collect group labels in the vector $v$.
		\item For each $r=1,\hdots,R$, sample random permutation $\pi_r(v)$, permute support points according to $\pi_r(v)$, and construct measures $\widehat{\mu}_n^r$ based on the frequencies of support points in new groups. Compute permuted test statistic $MOT(\widehat{\mu}_n^r)$ by solving the program \eqref{eq:mot_primal}.
		\item Compute approximate p-value (p. 158 of \cite{Davison}) as
		$$\hat{p}:=\frac{1+\sum_{r=1}^R \mathbbm{1}_{\{MOT(\widehat{\mu}_n^r) \geq MOT(\hat{\widehat{\mu}}_n)\}}}{1+R}$$
	\end{enumerate}
\end{pseudocode}

Computation of permuted test statistic in Step 3 requires to solve the $MOT$ program \eqref{eq:mot_primal}, similarly to the case of m-out-of-n bootstrap in Pseudocode \ref{pseudo:mn}. Hence, both algorithms have the same complexity, as shown in Table \ref{table:complexity}. Empirical performance of the permutation test of Pseudocode \ref{pseudocode:perm} is illustrated in Figure \ref{fig:synth_3D}. 


\begin{table}
	\caption{Complexity of computing a single sample for permutation null distribution, $X_0$ with m-out-of-n bootstrap, $X_0$ with derivative bootstrap, and ${UB}_0$ given by Theorem \ref{thm:as_dist}(b). Recall that $k$ is the number of measures $\mu_1,\hdots, \mu_k$, and $N$ is the cardinality of the underlying metric space $\mathcal{X}=\{x_1,\hdots,x_N \} \subset \mathbb{R}^d$. Algorithm (theory) row reports an algorithm used to prove theoretical complexity, while algorithm (practice) rows report algorithms implemented in this paper and available for use. The algorithm of Altschuler \& Boix-Adser{\`a} \cite{Altschuler} is abbreviated as AB-A and assumes fixed $d$.}
	\begin{tabular}{c||c|c|c}
		distribution to sample & permut. or $X_0$ (m-out-of-n) &
		$X_0$ (deriv.)
		& $UB_0$  \\
		hypothesis & null, alternative& null& null\\
		optimization program & equation \eqref{eq:mot_primal} & equation \eqref{eq:limdist_null_expression} & equation \eqref{eq:UB_0} \\
		\hline \hline
		$\#$ variables &$N^k$ & $kN$ & $(k-1)N$\\
		$\#$ equality constraints& $kN$ & $N$ & none \\
		$\#$ inequality constraints& none & $N^k$& $(k-1)N(N-1)$ \\
		\hline \hline
		theoretical complexity & $\text{poly}(N,k,\log U)$  & $\text{poly}(N,k,\log U)$ & $\text{poly}(N,k,\log U)$ \\
		
		reference &Theorem 2 of \cite{Altschuler}& Lemma \ref{lem:der_boot_compl} here & Theorem 6 of \cite{Altschuler} \\
		algorithm (theory) & AB-A \cite{Altschuler} &  AB-A \cite{Altschuler} & Ellipsoid \\
		\hline
		\multirow{3}{4em}{algorithm (practice)} & AB-A \cite{Altschuler} & AB-A \cite{Altschuler} & \\ 
		& Simplex & Simplex & Simplex \\
		& Interior point & Interior point & Interior point\\ 
		\hline
		\multirow{3}{4em}{software} & Github for \cite{Altschuler} (d=2) & & \\ 
		& GUROBI \cite{gurobi}& GUROBI \cite{gurobi} & GUROBI \cite{gurobi} \\
		& RSymphony \cite{Rsymphony} &RSymphony \cite{Rsymphony} &RSymphony \cite{Rsymphony} \\
	\end{tabular}
	\label{table:complexity}
\end{table}

\section{Applications}\label{sec:applications}
Sections \ref{subsec:synth} and \ref{subsec:maps} illustrate basic properties of $MOT$ based inference on synthetic datasets with measure supports on finite subsets of $\mathbb{R}^d$, $d=1,2,3$. The structures of these datasets emulate potential issues in the real data settings while providing convenient models to demonstrate the advantages of $MOT$ based procedures over existing methods (Figures \ref{fig:synth_3D} and \ref{fig:maps}).

Sections \ref{subsec:real_1D} and \ref{subsec:real_year} illustrate how $MOT$ based inference can be used in real biomedical settings where measures of interest are naturally finitely supported on a given metric space. We use \textit{Surveillance, Epidemiology, and End Results (SEER)} \cite{SEER}, a large database on cancers in the United States routinely used in biomedical literature. Detailed description of the used data including the information on the sample sizes is provided in Appendices \ref{appB:size} and \ref{appB:year}.

\subsection{Illustrations on synthetic data}\label{sec:illustration_synth}
\subsubsection{\textbf{3D Experiment} dataset: testing $H_0$}\label{subsec:synth}

We construct the dataset \textbf{3D Experiment} which aims to emulate experimental settings of counting the number of induced cells in response to a treatment. The model organism frequently used in such experiments is the nematode worm \textit{C. elegans}. The goal of the experiment is to determine whether certain genetic modification interrupts with a normal organ development, resulting in abnormal cell behavior observed in diseases \cite{Corchado, Zand}.

The abnormality is measured by the number of induced cells that emerge after genetic modification. There could be $0,1,2$ induced cells in each worm; a total of $n$ worms are examined giving a measure supported on $\{0,1,2\}$\footnote{In the real experiment, the measure is supported on $\{0,1,2,3 \}$, but we simplify it for the purpose of this synthetic dataset}. When constructing \textbf{3D Experiment} dataset, we assume that counting is simultaneously performed in two more sites of an animal, where the number of induced cells can be $\{0,1\}$. This results in the 3-dimensional support $\{0,1\}\times\{0,1\}\times \{0,1,2\}$ with $2\times 2\times 3 = 12$ points (Figure \ref{fig:synth_3D}A). 

Recent results in biological literature report differences in the number of induced cells between worm species \textit{C. elegans} and \textit{C. briggsae} \cite{Dawes, Mahalak}. 
Inspired by these results, we construct four measures $\{\mu_1,\mu_2,\mu_3,\mu_4\}$ in \textbf{3D Experiment} dataset: the first two correspond to two \textit{C. briggsae} worm strains, and the last two \textit{C. elegans} worm strains (Figure \ref{fig:synth_3D}A). We use this set up to demonstrate the power of MOT asymptotic and permutation tests for testing $H_0$ (Figure \ref{fig:synth_3D}B).

\begin{figure}
	\includegraphics[width=1\textwidth]{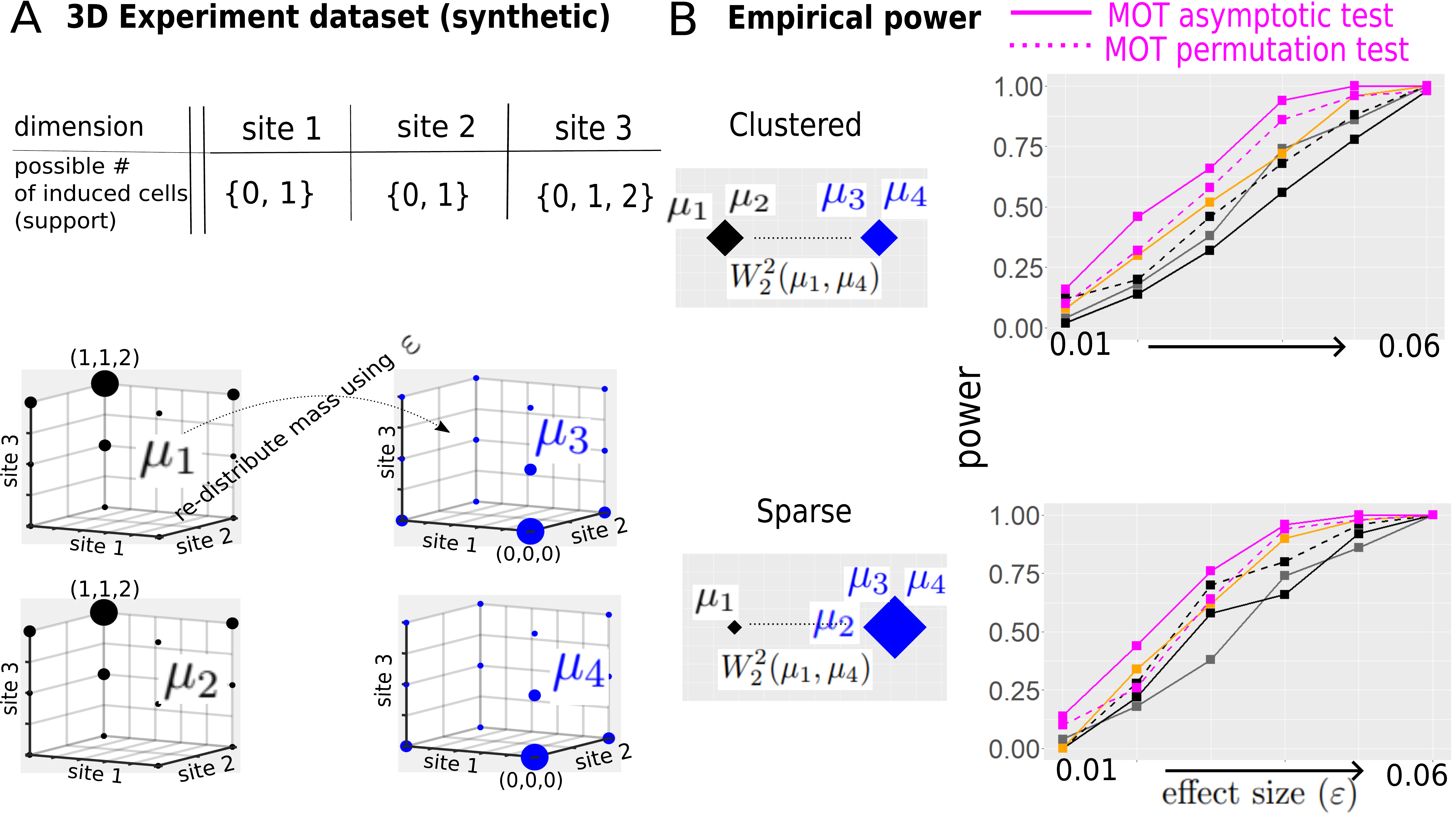}
	\caption{$MOT$ based testing of $H_0$ and empirical power on \textbf{3D Expeiment} synthetic data. A. Structure and plot of the true measures (``clustered" alternative is shown). B. Left: Illustration for ``clustered" and ``sparse" alternatives discussed in Section \ref{subsec:power}. Right:  Empirical power against ``clustered" $(\mu_1=\mu_2\neq\mu_3=\mu_4)$ and ``sparse" $(\mu_1=\mu_2=\mu_3\neq \mu_4)$ alternatives. MOT asymptotic test \eqref{eq:test} and MOT permutation test (section \ref{sec:perm}) are compared with the following four tests: distance based test DISCO from \cite{Rizzo} (gray), kernel based tests from \cite{Kim} with Gaussian (solid black) and energy distance (dashed black) kernels, and test based on empirical characteristic functions from \cite{Huskova} (orange). Sample sizes are taken as $n=300$ and $n=500$ for two classes, respectively.} 
	\label{fig:synth_3D}
\end{figure}
\subsubsection{\textbf{Anderes et al. 2016} dataset: $H_a$ inference} \label{subsec:maps}

We consider the data constructed by \cite{Anderes}, where it demonstrates the properties of a barycenter of finitely supported measures. Each measure represents a demand distribution (for some hypothetical product) over nine locations on the map (these locations are cities in California, and they constitute a finite support $\mathcal{X}=\{x_1,\hdots, x_9\} \subset \mathbb{R}^2$ for demand distributions). There are $12$ measures in \cite{Anderes}, each giving demand distribution during particular month (Figure \ref{fig:maps}A,B).

We use these measures as the ground truth and construct empirical measures by sampling multinomial counts based on this truth. We note that all $12$ underlying true measures are different, i. e., $H_a$ holds. Moreover, the differences are more drastic between months with different temperature since \cite{Anderes} allows the temperature to influence the demand. Our inference under $H_a$ confirms this claim by examining sub-collections of measures with months from the same season versus months from different seasons and comparing Confidence Regions for $MOT$ under these settings (Figure \ref{fig:maps}C). 

\begin{figure}
	\includegraphics[width=1\textwidth]{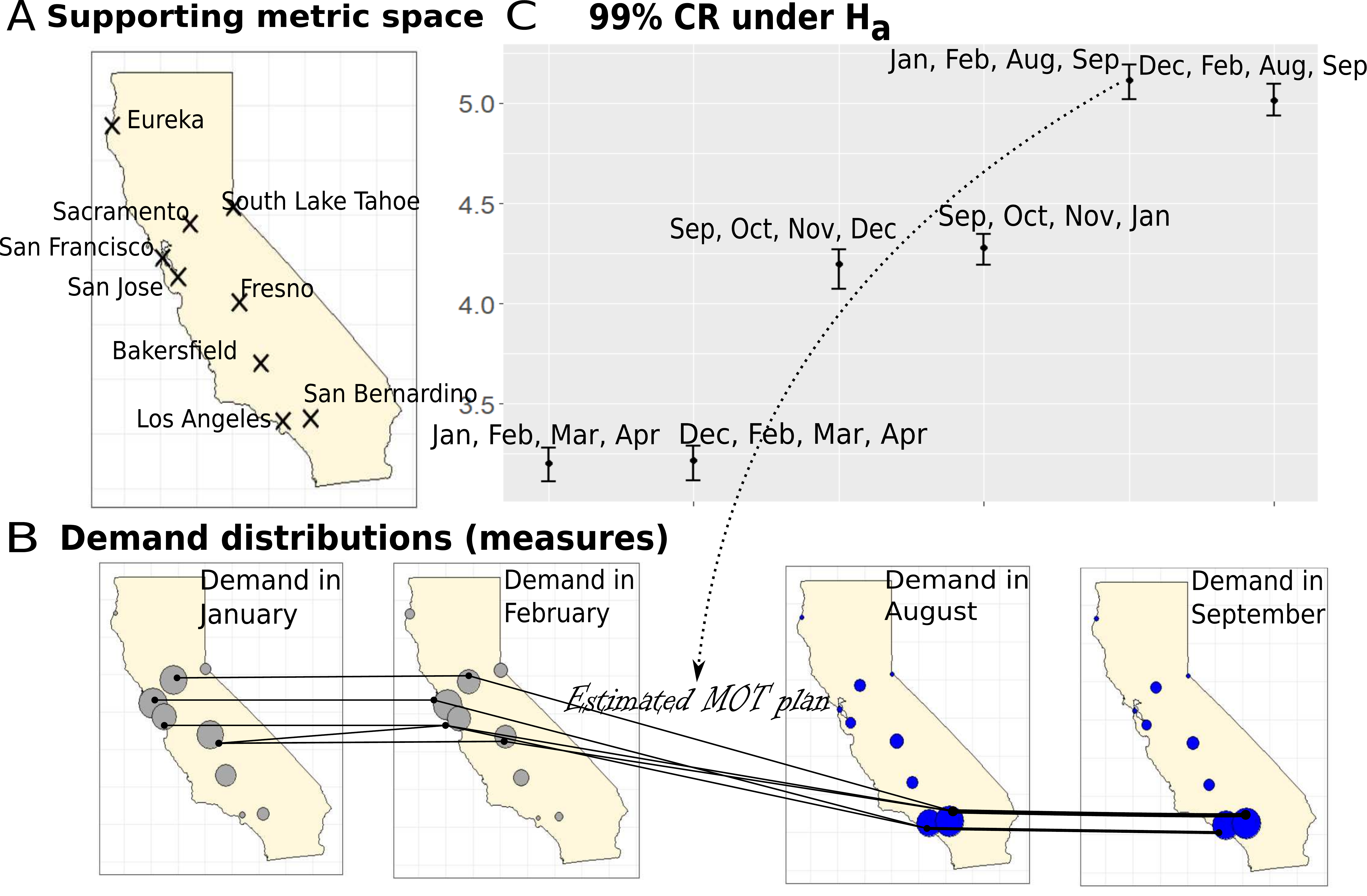}
	\caption{$MOT$ based inference under $H_a$ for the \textbf{Anderes et al. 2016} dataset from \cite{Anderes} described in Section \ref{subsec:maps}. A. Schematic of the state of California map with nine cities supporting the $12$ measures corresponding to monthly demand distributions (four of them shown in B). B. Illustration of estimated $MOT$ plan (multicoupling; five highest probability non-identity tuples shown) coupling the support points of four measures. C. $99 \%$ Confidence Regions for $MOT$ cost under $H_a$ for 4-measure collections. Note an overlap of similar collections and no overlap between different collections, as well as higher $MOT$ costs for collections whose monthly demands are more different. }
	\label{fig:maps}
\end{figure}

\subsection{Applications to real data}\label{sec:application_real}
\subsubsection{\textbf{SEER Tumor size} dataset: testing $H_0$} \label{subsec:real_1D} 
An important question in cancer studies is to determine what factors are associated with development of metastases. In the case of breast cancer,  \cite{Sopik} showed that metastatic risk increases with tumor size in intermediate and some of the large tumors ($\geq 1$ cm), but does not increase in small tumors ($<1$ cm). The study used SEER database and considered a correlation between tumor size and prevalence of metastases. Here we confirm these results via k-sample testing, as described below.  Further, we observe similar trend in three more cancer types: prostate cancer, lung cancer in males, and lung cancer in females (Figure \ref{fig:sizes_1D}).  

We use the SEER database to extract the data on distributions of tumor size and term this dataset \textbf{SEER Tumor size}. We consider three groups of patients with different disease progression status, giving $k=3$ measures: patients with no metastases present at diagnosis and alive at the end of the study ($\mu_1$), patients with metastases at diagnosis and alive at the end of the study ($\mu_2$), and patients dead by the end of the study with death caused by the diagnosed cancer ($\mu_3$). 

First, we test $H_0$ for size distributions in small tumor range ($<1$ cm); we find no difference between groups, which holds for breast, prostate, and both lung cancer types (Figure \ref{fig:sizes_1D}A). In contrast, the groups are found different for tumors in larger range ($1 - 9$ cm), which again holds for all considered cancer types (Figure \ref{fig:sizes_1D}B).
The analysis confirms the significance of metastatic status for the tumor size distribution in intermediate/large tumors, but not the small tumors. 

\begin{figure}
	\includegraphics[width=0.95\textwidth]{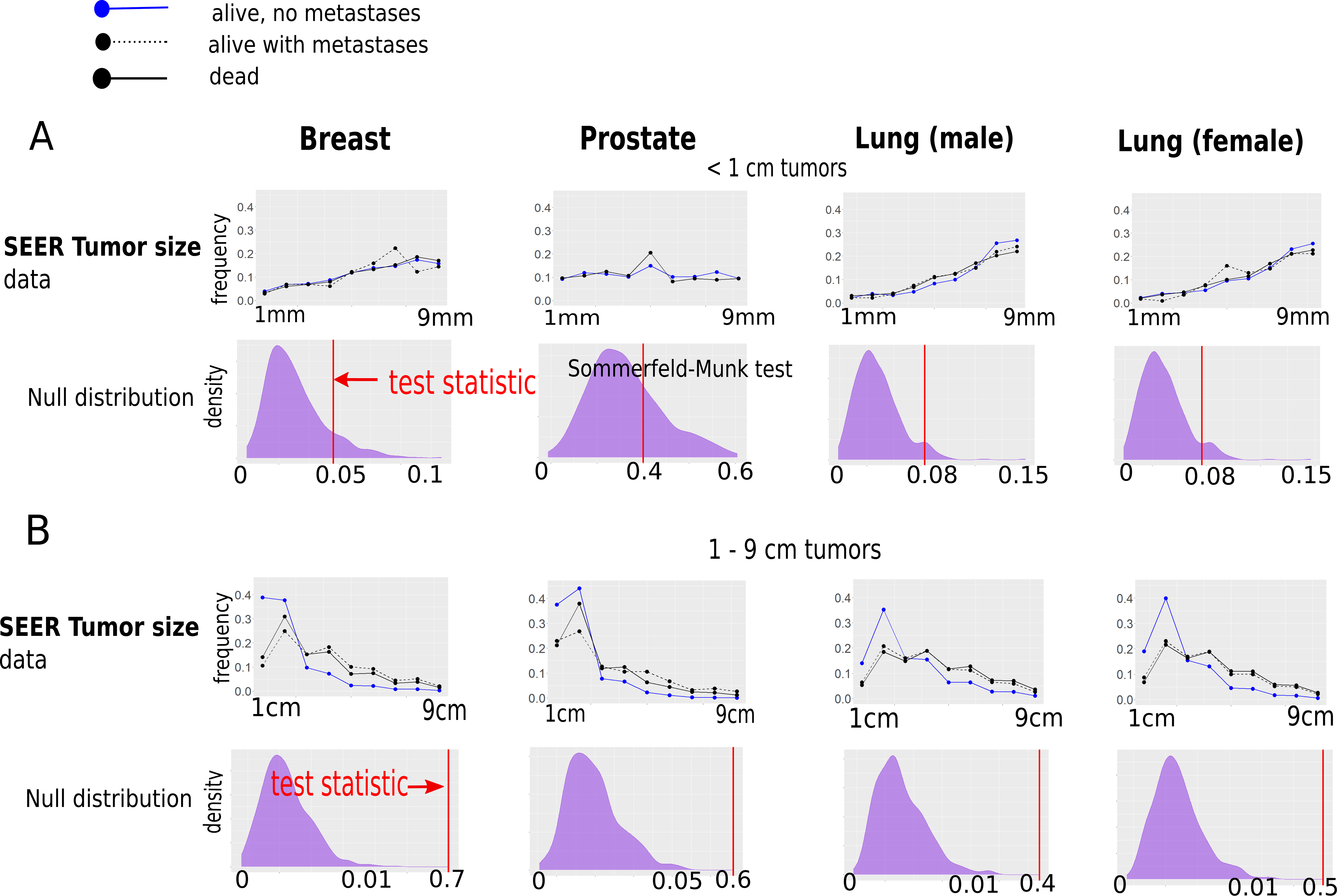}
	\caption{Application of $MOT$ based testing of $H_0$ to comparison of tumor size distributions from \textbf{SEER Tumor size} dataset described in Section \ref{subsec:real_1D}. Size distributions are compared for three groups of patients with different metastatic/survival characteristics (alive at the end of the study with no metastases at diagnosis, alive with metastases present at diagnosis, dead at the end of the study). All sample sizes are very large, except the prostate cancer case (Appendix \ref{appB:size}). A. Comparison of sizes in smaller tumor size range shows no difference (at $1 \%$ level) among three groups of patients. B. Comparison of sizes for larger tumors shows significant difference (at $1 \%$ level) between three groups of patients. \underline{Note}: For the prostate cancer with small tumor sizes, enough data was available only for two groups of patients rather than three, so we used the Sommerfeld-Munk test \cite{Sommerfeld} in place of the $MOT$ test and obtained similar conclusion. }
	\label{fig:sizes_1D}
\end{figure}

\subsubsection{\textbf{SEER Year of diagnosis} dataset: $H_a$ inference} \label{subsec:real_year} 
Our final example concerns with potential differences in distributions of characteristics in patients diagnosed at different times. Such differences are discussed in the case of early stage lung cancer, possibly due to improvements in diagnostic technologies \cite{Nations}. Here we compare these distributions in a framework of $k$-sample inference to confirm the differences in diagnosis results over time, and show that the trend is similar in both male and female patients.

We use SEER database to extract joint distributions of tumor size and patients' age for lung cancer in males and females and term this dataset \textbf{SEER Year of diagnosis}. We consider four time periods giving $k=4$ measures: 2004 - 2006 ($\mu_1$), 2009 - 2011 ($\mu_2$), 2014 - 2016 ($\mu_3$), 2019 - 2020 ($\mu_4$). The distributions are found different by $MOT$ test in both male and female lung cancer cases, and we are interested to compare the differences between male and female collections of measures (Figure \ref{fig:year}).

We observe visually that the differences between measures are of similar nature in male and female cases: later diagnostic years appear to have more small size tumors diagnosed in comparison to earlier years (Figure \ref{fig:year}A). The similarities between male and female cases are reflected in overlapping Confidence Regions. The reported $MOT$ plan also highlights this finding by coupling the small size support points from the later periods with the larger size support points from the earlier period for patients of the same age (Figure \ref{fig:year}B).


\begin{figure}
	\includegraphics[width=0.95\textwidth]{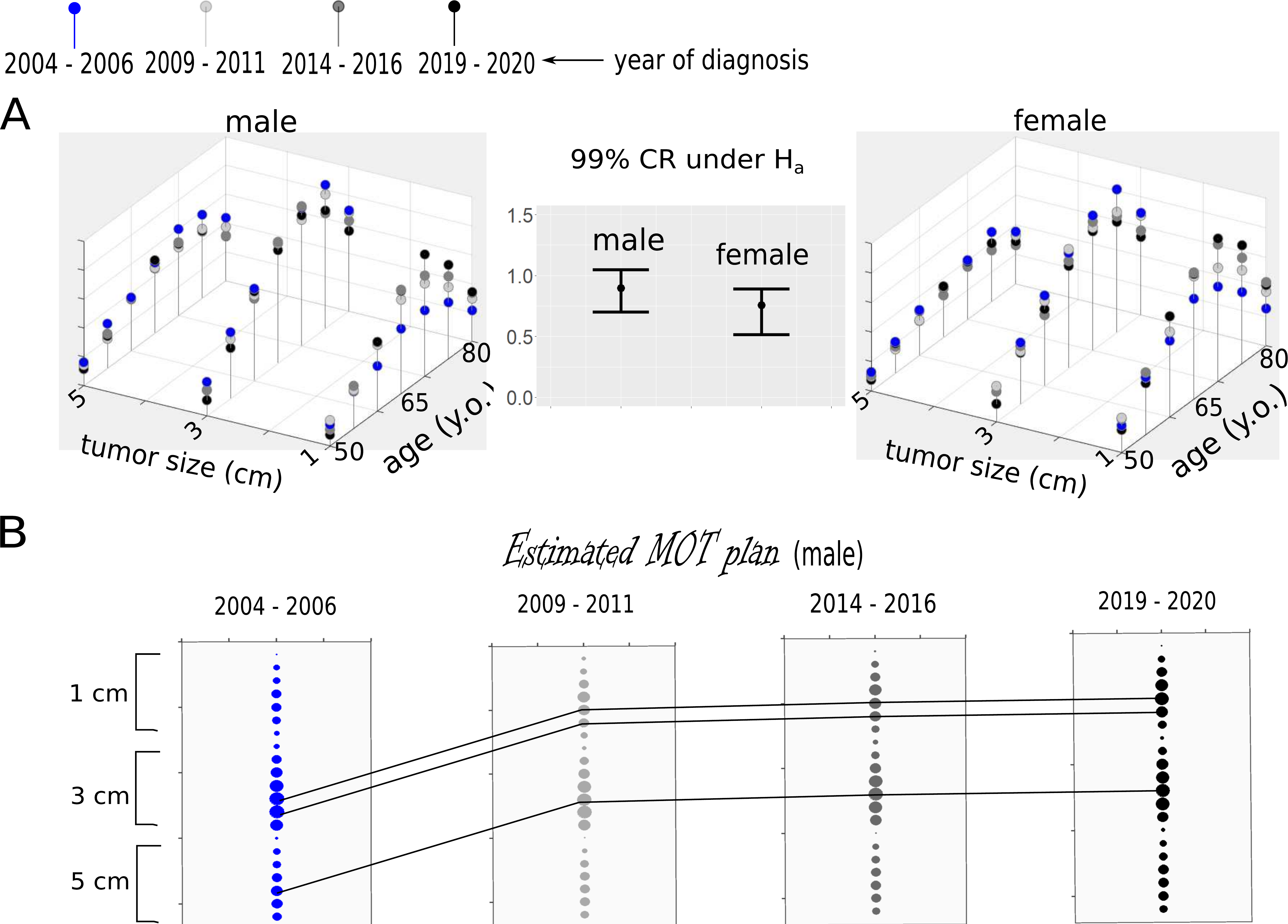}
	\caption{Application of $MOT$ based inference under $H_a$ to comparison of bivariate (age/tumor size) distributions from \textbf{SEER Year of diagnosis} dataset described in Section \ref{subsec:real_year}. Bivariate distributions are compared for four periods of diagnosis: 2004 - 2006, 2009 - 2011, 2014 - 2016, and 2019 - 2020 In both male and female cases, the age/tumor size distributions are different between four periods of diagnosis (i.e., $H_0$ is rejected). A. Confidence Regions for the $MOT$ cost for male and female cases overlap, suggesting that differences between distributions are of similar magnitude. B. Illustration of estimated $MOT$ plan (multicoupling) for male case (non-identity tuples with positive multicoupling mass are shown).}
	\label{fig:year}
\end{figure}

\section{Discussion and Conclusions}\label{sec:conclusion}
\subsection{Summary of results}\label{sec:summary}
In this paper, we proposed an Optimal Transport approach to $k$-sample inference. We used the optimal value of the Multimarginal Optimal Transport program ($MOT$) to quantify the difference in a given collection of $k$ measures supported on finite subsets of $\mathbb{R}^d$, $d\geq 1$. 

We derived limit laws for the empirical version of $MOT$ under assumptions of $H_0$ (all $k$ measures are the same) and $H_a$ (some measures may differ). We established that the limit cannot be Gaussian under $H_0$, and provided sufficient conditions for the limit to be Gaussian under $H_a$. Based on these results, we derived expression for the power function of the test of $H_0$; using this function, we proved consistency of the test against any fixed alternative and uniform consistency in certain broad classes of alternatives. 

To sample from limit laws, we confirmed that derivative and m-out-of-n bootstrap methods are consistent under $H_0$, and m-out-of-n bootstrap is consistent under $H_a$. We proved polynomial complexity of sampling via derivative bootstrap, and defined a low complexity upper bound to approximate the test cut-off under $H_0$. As an alternative to sampling for the limit laws, we defined a permutation test that is suitable if sample sizes are not large enough to validate to convergence to the limit. 

We empirically showed that the $MOT$ based test of $H_0$ has strong finite sample power performance when compared with state-of-the-art methods. We also showed how to construct Confidence Regions for the true $MOT$ value under the assumptions of $H_a$, and how to use this procedure to compare variability between collections of $k$ measures. Finally, we demonstrated the use of our methodology on several real biomedical datasets.

\subsection{Limitations and future directions}\label{sec:limitations}
\par{\textit{Extensions to continuous measures:}} One of the main benefits of working on finite spaces is the ability to obtain a non-degenerate limit law under $H_0$ (i. e. the law with a non-zero variance),  which allows to quantify fluctuations of the $MOT$ value when all measures are the same and test $H_0$. In $k=2$ case, non-degeneracy may fail for continuous measures (see discussion in Section \ref{sec:weak_limits}), but holds for discrete measures with limit laws of the form \cite{Sommerfeld}. When extending \cite{Sommerfeld} to continuous measures in $k=2$ case, \cite{Hundrieser} show that non-degenerate limit laws are possible provided that there exist dual variables (the \textit{Kantorovich potentials}) which are not constant almost everywhere (Theorem 4.2 of \cite{Hundrieser}). While constant potentials are always present under $H_0$ (Corollary 4.6 of \cite{Hundrieser}), in discrete case there are other potentials around that are not constant (this holds for our case of $k>2$, see discussion preceding Theorem \ref{thm:as_dist}). Lemma 11 of \cite{Staudt} shows that in $k=2$ case, it is possible to get non-constant potentials for continuous measures by requiring the support to be disconnected (intuitively, this resembles the discrete situation). It is an interesting future direction to analyze how the $k>2$ potentials would behave if our limit laws are extended to continuous measures and possibly different ground cost $c$.

\par{\textit{Improving upper bounds on the null distribution:}} While the proposed null upper bound $UB_0$ is computationally tractable and tight for $k=2$ measures, it may be too large to provide a good power for $H_0$ testing for larger $k$. The main reason for this weakness is the ``independent" nature of optimization over dual variables $u_1,\hdots,u_k$ recorded in the constraints. Indeed, the constraints that relate different entries of different dual vectors are omitted, and hence the dual vectors only interact via $\sum_{i=1}^ku_i = 0$ when solving the $UB_0$ program. The bound $UB_0$ can be strengthened by introducing additional constraints from $X_0$, which will decrease the value of the program and provide a tighter bound on the null distribution. Two possible choices for these extra constraints are (1) including constraints that involve diverse entries from different dual vectors (e.g., $u_1^1+u_2^2+\cdots+u_k^k \leq c_{12\cdots k}$), and/or (2) sampling constraints at random (such constraint sampling techniques are widely applicable when solving large linear programs arising, for instance, in Markov Decision Processes \cite{Farias}). 

\par{\textit{Faster computation of $MOT$/barycenter value and permutation test:}} Our empirical power results suggest that $MOT$ could serve as a suitable statistic for a powerful permutation test (for example, when sample sizes are not large enough to validate an asymptotic test). In that case, the $MOT$ (or, equivalently, the barycenter) value has to be computed for each permutation. While computation of the $MOT$/barycenter value is challenging for a large cardinality of the support $N$ and/or number of measures $k$, recently proposed subsampling techniques \cite{Heinemann} and algorithmic tools \cite{FriescekePenka} can be used to keep up with speed and memory requirements.

\begin{appendix}
	\section{Proofs of main results omitted in the main text}\label{appA}
	\subsection{Details on the proof of Theorem \ref{thm:as_dist}(a)}\label{app:proof_thm_a}
	\underline{Step 1}  (Weak convergence of measures) For every $i=1,\hdots,k$, the empirical process converges weakly (Theorem 14.3 - 4 of \cite{Bishop}) as
	$$\sqrt{n_i}\left( \widehat{\mu}_i - \mu  \right) \xrightarrow[]{\text{in law}} G_i$$
	where $G_i \sim N(0, \text{diag}(\mu_i - \mu_i\mu_i') )$. Since the processes are independent in $i=1,\hdots,k$, by Theorem 1.4.8 of \cite{vanderVaartbook} we can view them jointly as
	
	\begin{equation*}
		\begin{pmatrix}\sqrt{n_1}(\hat{\mu}_1 - \mu_1) \\ \vdots \\ \sqrt{n_k}(\hat{\mu}_k - \mu_k) \end{pmatrix} \xrightarrow[]{\text{in law}} \begin{pmatrix} G_1 \\ \vdots \\ G_k \end{pmatrix} 
	\end{equation*}
	with respect to $l^1$ norm on $\bigotimes_{i=1}^k l^1(\mathcal{X})$. Using Slutsky's Theorem (e.g., Example 1.4.7 of \cite{vanderVaartbook})
	\begin{equation*}
		\begin{pmatrix}\sqrt{n_1}\frac{\sqrt{n_2}}{\sqrt{n_1+ \hdots + n_k}} \cdots \frac{\sqrt{n_k}}{\sqrt{n_1+ \hdots + n_k}}(\hat{\mu}_1 - \mu_1) \\ \vdots \\ \sqrt{n_k} \frac{\sqrt{n_1}}{\sqrt{n_1+ \hdots + n_k}} \cdots \frac{\sqrt{n_{k-1}}}{\sqrt{n_1+ \hdots + n_k}} (\hat{\mu}_k - \mu_k) \end{pmatrix} \xrightarrow[]{\text{in law}} \begin{pmatrix} \overbrace{\sqrt{\lambda_2} \cdots \sqrt{\lambda_k}}^{\sqrt{a_1}}G_1 \\ \vdots \\ \underbrace{\sqrt{\lambda_1} \cdots \sqrt{\lambda_{k-1}}}_{\sqrt{a_k}} G_k \end{pmatrix} =:G
	\end{equation*}
	which is of the form
	\begin{equation*}\rho_n (\widehat{\mu}_n - \mu) \xrightarrow[]{\text{in law}} \sqrt{a} G
	\end{equation*}
	with $\rho_n:= \frac{\sqrt{n_1 \cdot \hdots \cdot n_k}}{\left(\sqrt{n_1+\hdots+n_k}\right)^{k-1}}$. 
	
	\underline{Step 2} (Hadamard directional differentiability of $MOT$) Consider the functional $f: \bigotimes_{i=1}^k \mathcal{P} (\mathcal{X}) \subseteq \bigotimes_{i=1}^k l^1(\mathcal{X})   \longrightarrow \mathbb{R}$ given by $f(\mu) = MOT(\mu)$, where $MOT(\mu)$ is the optimal value $z$ of the primal program \eqref{eq:mot_primal}, or, equivalently, the dual program \eqref{eq:mot_dual}. The map $\mu \longrightarrow z(\mu)$ is G{\^a}teaux directionally differentiable at $\mu$ tangentially to a certain set $D \subseteq \bigotimes_{i=1}^k \mathcal{P}(\mathcal{X})$ (Theorem 3.1 of \cite{Gal}) and locally Lipschitz (Remark 2.1 of \cite{Carcamo}\footnote{Locally Lipschitz property can also be shown directly (Appendix \ref{appB}).}). Hence, it is Hadamard directionally differentiable at $\mu$ tangentially to $D$ and two derivatives coincide (see Proposition 3.5 of \cite{Shapiro} and also the discussion in Section 2.1 of \cite{Carcamo}). The derivative is given by
	\begin{equation}\label{eq:gatoux_der}
		f'_\mu(g) =  \max_{\{u: A'u \leq c, \langle u,\mu  \rangle = MOT(\mu)\}} \langle u, g\rangle
	\end{equation}
	for $g \in D:=\{\lim_{n \to \infty}\frac{\widehat{\mu}_n - \mu}{t_n}$, $\widehat{\mu}_n \in l^1(\mathcal{X})$, $t_n \searrow 0\}$.


	\subsection{Details on the proof of Theorem \ref{thm:as_dist}(b)}\label{app:proof_thm_b}
	Here we construct an upper bound ${UB}_0$ on the null random variable $X_0$ with lower computational complexity than $X_0$ by relaxing some constraints of the $X_0$ program. 
	Consider the null distribution of $X_0 \sim \mathcal{D}_0$ in \eqref{eq:limdist_null_expression} given by 
	\begin{equation*}
		\begin{aligned}
			\max_{u} & \sum_{i=1}^k \sqrt{a_i}\langle u_i, G_i   \rangle \\
			\text{ s.t. } & \sum_{i=1}^k u_i = 0 \\
			& A'u \leq c
		\end{aligned}
	\end{equation*}
	Note that for any given realization of random coefficients $G_i\sim\mathcal{N}\left(0, \Sigma_1 \right)$, this linear program has $k$ dual vectors $\{u_i\}_{i=1}^k$, each containing $N$ entries  (which gives $kN$ variables in total). There are $N$ equality constraints in $\sum_{i=1}^k u_i$, each corresponding to summation of a particular entry of these dual vectors to zero. A large number $N^k$ of inequality constraints in $A'u \leq c$ comes from the size of the primal constraint matrix $A \in \mathbb{R}^{kN\times N^k}$ (the structure of this matrix is discussed in Section \ref{sec:prelim}).
	
	To construct ${UB}_0 \sim \mathcal{D}_{{UB}_0}$ with smaller complexity than $X_0$, we take the same objective function as in $X_0$ program above, but relax some of the inequality constraints $A'u\leq c$ subject to equality constraints $\sum_{i=1}^k u_i = 0$. Formally, we represent the equality constraints of $X_0$ as
	$$\sum_{i=1}^k u_i = 0 \iff u=\begin{bmatrix} u_1 \\ \vdots \\ u_k  \end{bmatrix} \in \text{ker}(B)$$
	where $B$ represents the linear operator with matrix whose $j${th} row picks $j${th} element from vectors $u_1,\hdots, u_k$ and sums them up. 
	
	As detailed in the proof of Lemma \ref{lem:dual_bd_null} given in \ref{app:proof_dual_bd}, for $u \in \text{ker}(B)$, there are constraints in $A'u\leq c$ of the form
	$${u_1}_1 - {u_1}_j \leq c_{1\cdots 1 j}$$
	which can be written as
	$$\underbrace{\begin{pmatrix} 1 & -1 & 0 & \cdots & 0 \\ 1 & 0 & -1 & \hdots & 0\\ \vdots &  \vdots &\vdots&  \ddots & \vdots \\1 & 0 & 0 & \cdots & -1\end{pmatrix}}_{\widetilde{A}'_1} u_1 \leq \underbrace{\begin{pmatrix}c_{1\cdots 1 2} \\ \vdots \\ \vdots \\ c_{1\cdots 1 N} \end{pmatrix}}_{\widetilde{c}_1}$$
	Similarly, for $u \in \text{ker}(B)\cap\{A'u\leq c \}$, the first dual vector $u_1$ satisfies 
	$$\underbrace{\begin{pmatrix} -1 & 1 & 0 & \cdots & 0 \\0 & 1 & -1 & \hdots & 0\\ \vdots &  \vdots &\vdots&  \ddots & \vdots \\0 & 1 & 0 & \cdots & -1\end{pmatrix}}_{\widetilde{A}'_2} u_1 \leq \underbrace{\begin{pmatrix}c_{2\cdots 2 1} \\ \vdots \\ \vdots \\ c_{2\cdots 2 N} \end{pmatrix}}_{\widetilde{c}_2}$$
	$$\vdots$$
	$$\underbrace{\begin{pmatrix} -1 & 0 & 0 & \cdots & 1 \\0 & -1 & 0 & \hdots & 1\\ \vdots &  \vdots &\ddots&  \vdots & \vdots \\0 & 0 & 0 & -1 & 1\end{pmatrix}}_{\widetilde{A}'_N} u_1 \leq \underbrace{\begin{pmatrix}c_{2\cdots 2 1} \\ \vdots \\ \vdots \\ c_{N\cdots N N-1} \end{pmatrix}}_{\widetilde{c}_N}$$
	Thus, for $u \in \text{ker}(B)\cap\{A'u\leq c \}$,
	$$\begin{pmatrix}\widetilde{A}_1' \\ \vdots \\ \widetilde{A}_N'  \end{pmatrix} u_1 \leq \begin{pmatrix} \widetilde{c}_1 \\ \vdots \\ \widetilde{c}_N \end{pmatrix} $$
	and the same constraints are satisfied by $u_2,\hdots,u_k$. Combining these constraints, we obtain
	$$\underbrace{\begin{pmatrix} {\begin{pmatrix}\widetilde{A}_1' \\ \vdots \\ \widetilde{A}_N'  \end{pmatrix}} & 0 & \hdots & 0 \\ 0 & {\begin{pmatrix}\widetilde{A}_1' \\ \vdots \\ \widetilde{A}_N'  \end{pmatrix}} & \hdots & 0 \\ \vdots & \vdots & \ddots & \vdots \\ 0& 0& \hdots & {\begin{pmatrix}\widetilde{A}_1' \\ \vdots \\ \widetilde{A}_N'  \end{pmatrix}} \end{pmatrix}}_{\widetilde{A}'} \begin{bmatrix} u_1 \\ \vdots \\ \vdots \\ u_k \end{bmatrix} \leq \underbrace{\begin{pmatrix} {\begin{pmatrix} \widetilde{c}_1 \\ \vdots \\ \widetilde{c}_N \end{pmatrix}} \\ \vdots \\ \vdots \\ {\begin{pmatrix} \widetilde{c}_1 \\ \vdots \\ \widetilde{c}_N \end{pmatrix}}\end{pmatrix}}_{\widetilde{c}}$$
	for all $u \in \text{ker}(B)\cap\{A'u\leq c \}$.
	This gives us $\widetilde{A}'u\leq \widetilde{c}$ with $kN(N-1)$ constraints that we choose to be the constraint set for the linear program \eqref{eq:UB_0} defining ${UB}_0$ (which now has no equality constraints).
	
	Note that for $u \in \text{ker}(B)$, we have that $u_1 = -\sum_{i=2}^k u_i$, which, upon substitution to the objective function of \eqref{eq:limdist_null_expression}, gives
	\begin{equation*}
		\begin{aligned} \langle u_2, \sqrt{a_2} G_2  \rangle + \cdots + \langle u_k, \sqrt{a_k} G_k  \rangle & - \langle u_2,\sqrt{a_1} G_1  \rangle - \cdots - \langle u_k,\sqrt{a_1} G_1 \rangle \\
			& \overset{d}{=} \sum_{i=2}^k \langle u_i, \sqrt{a_i}G_i - \sqrt{a_1}G_1  \rangle
		\end{aligned}
	\end{equation*}
	This is the objective in the linear program \eqref{eq:UB_0} defining ${UB}_0$ with $(k-1)N$ variables. 
	
	\subsection{Details for Observation \ref{obs:cost_dist}}\label{app:proof_cost_dist}
	Consider a cost vector in the $MOT$ program \eqref{eq:mot_primal}  with entries $c_{i_1i_2\cdots i_k}$, where each index takes values in $\{1,\hdots,N \}$. Suppose that $k-1$ indexes have the same value, e.g. $c_{i \cdots i j}$. Then,
	
	\begin{equation*}
		\begin{aligned}
			c_{i\cdots i j} = \frac{1}{k} \left( (k-1)\left\|x_i - \overline{x}_{i\cdots ij} \right\|^2 + \left\|x_j -  \overline{x}_{i\cdots ij} \right\|^2 \right)
		\end{aligned}
	\end{equation*}
	where $\overline{x}_{i\cdots ij} = \frac{1}{k}\left[ (k-1)x_i + x_j  \right]$. The first term gives 
	$$ (k-1)\left\|x_i - \overline{x}_{i\cdots ij} \right\|^2 = \frac{k-1}{k^2} \|x_i - x_j \|^2$$
	and the second term gives
	$$\left\|x_j -  \overline{x}_{i\cdots ij} \right\|^2 = \frac{(k-1)^2}{k^2} \|x_i - x_j \|^2$$
	Combining the two and multiplying by $\frac{1}{k}$ gives the result.
	\subsection{Proof of Lemma \ref{lem:dual_bd_null}} \label{app:proof_dual_bd}
	For notational clarity, we start with a case of $k=2$ measures, and assume for simplicity that they are supported on the metric space $\mathcal{X}$ with only two points. Let $u=\left((u_1,u_2)=({u_1}_1 {u_1}_2), ({u_2}_1,{u_2}_2) \right)$ be solutions to the dual $MOT$ program \eqref{eq:mot_dual} satisfying $\sum_{i=1}^k u_i = 0$, i.e. $u_2=-u_1$.
	Recall that the constraint matrix in the dual constraints $A'u\leq c$ is
	$$A'=\begin{pmatrix} 1&0&1&0\\1&0&0&1\\0&1&1&0\\0&1&0&1 \end{pmatrix}$$
	Applying it to $u=\begin{pmatrix} u_1 \\ -u_1 \end{pmatrix}$ gives the constraints on $u_1$ as
	\begin{equation*}
		\begin{aligned}
			& {u_1}_1 - {u_1}_1 \leq c_{11} = 0 \\
			& {u_1}_1 - {u_1}_2 \leq c_{12} \\
			& {u_1}_2 - {u_1}_1 \leq c_{21} = c_{12} \\
			& {u_1}_2 - {u_1}_2 \leq c_{22} = 0
		\end{aligned}
	\end{equation*}
	The middle two constraints give
	$$|{u_1}_1 - {u_1}_2 | \leq c_{12}$$
	Recalling that ${u_1}_1 = 0$, we get that
	$$|{u_1}_2| \leq c_{12}$$
	If the number of support points was $N>2$, similar argument using constraints ${u_1}_1 - {u_1}_j \leq c_{1j}$ and ${u_1}_j - {u_1}_1 \leq c_{1j}$ would give
	$$|{u_1}_j| \leq c_{1j} \text{ for } j=1,\hdots,N$$
	Recall from Observation \ref{obs:cost_dist} that $c_{1j} = \frac{k-1}{k^2} \|x_1 - x_j \|^2$ which finishes the proof for $k=2$ measures.
	
	To see the result for $k>2$ measures, note that $A'u\leq c$ contains constraints 
	\begin{equation*}
		\begin{aligned}
			& \underbrace{{u_1}_1 + {u_2}_1 + \cdots + {u_{k-1}}_1}_{-{u_k}_1} + {u_k}_2 \leq c_{1\cdots 1 2} \\
			& \underbrace{{u_1}_2 + {u_2}_2 + \cdots + {u_{k-1}}_2}_{-{u_k}_2} + {u_k}_1 \leq c_{2\cdots 2 1} 
		\end{aligned}
	\end{equation*}
	and by Observation \ref{obs:cost_dist}, $c_{1\cdots 1 2} = c_{2\cdots 2 1}=\frac{k-1}{k^2}\|x_1 - x_2\|^2$
	giving 
	$$|{u_k}_2| \leq \frac{k-1}{k^2}\|x_1 - x_2\|^2$$
	and, by similar reasoning, 
	$$|{u_k}_j| \leq \frac{k-1}{k^2}\|x_1 - x_j\|^2  \text{ for }  j=1,\hdots,N$$
	Similarly, we conclude the same property for all dual variables $u_i$ indexed by $1,\hdots,k$, i.e. 
	$$|{u_i}_j| \leq \frac{k-1}{k^2}\|x_1 - x_j\|^2  \text{ for }  j=1,\hdots,N$$
	concluding the proof.
	
	\subsection{Proof of Lemma \ref{lem:clustered_alt}}\label{app:proof_clustered_alt}
	
	By the equivalence between $MOT$ and barycenter problems \eqref{eq:mot_general} and \eqref{eq:bary_general},
	$$MOT(\mu) = \inf_{\nu \in \mathcal{P}^2(\mathbb{R}^d)}\frac{1}{k}\sum_{i=1}^k W_2^2(\mu_i,\nu)$$
	$$= \inf_{\nu \in \mathcal{P}^2(\mathbb{R}^d)}\frac{1}{k} \left[ \frac{k}{2} W_2^2(\mu_1,\nu) + \frac{k}{2} W_2^2(\mu_k,\nu)\right] =  \inf_{\nu \in \mathcal{P}^2(\mathbb{R}^d)}\frac{1}{2}W_2^2(\mu_1,\nu) + \frac{1}{2}W_2^2(\mu_k,\nu)$$
	i.e. $\nu$ is a solution to the barycenter problem between $\mu_1$ and $\mu_k$ with optimal value $MOT(\mu_1,\mu_k) = \frac{1}{4}W_2^2(\mu_1,\mu_k)$. By similar reasoning, for $\mu \in \mathcal{F}^C_k$, the population value $MOT(\mu)$ is the same as the value of MOT computed with one measure from each cluster. 
	
	\subsection{Proof of Lemma \ref{lem:sparse_alt}}\label{app:proof_sparse_alt}
	Let $(u_1^*,u_k^*)$ be dual optimal solutions to the the problem $W_2^2(\mu_1,\mu_k)$. We will show that $(\frac{k-1}{k^2}u_1^*,0,\cdots,0, \frac{k-1}{k^2}u_k^*)$ are dual optimal for $MOT(\mu_1,\cdots,\mu_k)$, and hence $$MOT(\mu_1,\cdots,\mu_k)=\frac{k-1}{k^2}\langle u_1^*, \mu_1 \rangle + \frac{k-1}{k^2}\langle u_k^*, \mu_k \rangle = \frac{k-1}{k^2}W_2^2(\mu_1,\mu_k) $$
	By optimality of $(u_1^*,u_k^*)$ for $W_2^2(\mu_1,\mu_k)$, the dual constraints hold with equality
	$$u_1^{*i} + u_k^{*j} \leq \| x_i - x_j \|^2$$
	for all $i,j \in \{1,\hdots,N \}$, and hold with equality
	$$u_1^{*i} + u_k^{*j} = \| x_i - x_j \|^2$$
	on for some pairs $(i,j) \in I$ with the set $I$ indexing the pairs $(x_i,x_j)$ that support the optimal Wasserstein coupling $\pi^*$. Consider a multicoupling that agrees with $\pi^*$ on tuples $(x_i,\cdots,x_i,x_j)$, $(i,j) \in I$, and is zero otherwise (so the set of such tuples has a full mutlicoupling measure) - this is the candidate for the primal optimal solution to $MOT$. Further, the above equality implies, for $(i,j)\in I$,
	$$\frac{k-1}{k^2}u_1^{*i} + 0 + \cdots + 0 + \frac{k-1}{k^2} u_k^{*j} = \frac{k-1}{k^2}\| x_i - x_j \|^2 \underset{\text{Observation } \ref{obs:cost_dist}}{=} c_{i\cdots i j}$$
	Moreover, $c_{i\cdots i j}$ is no larger than the value of $c$ if some indices are not repeated, making the candidate $(\frac{k-1}{k^2}u_1^*,0,\cdots,0, \frac{k-1}{k^2}u_k^*)$ dual feasible for $MOT$. By complementary slackness (e. g., Lemma 1.1 of \cite{Gladkov} which specifically addresses the multimarginal problem), our dual candidate is optimal. 
	
	\subsection{Proof of Proposition \ref{prop:boot}}\label{app:proof_boot}
	\begin{itemize}
		\item[(a)] 
		By Theorem 3.1 of \cite{Hong}, the numerical directional derivative estimator $\frac{f(\hat{\mu}_n + \varepsilon_n \hat{G}^*) - f(\hat{\mu}_n)}{\varepsilon_n}$ is consistent for the directional derivative $f_\mu'(G)$ under mild measurability conditions on $\hat{G}^*$. The choice $\varepsilon_n = \frac{1}{r_m}$ with $m = \sqrt{n}$ (or, more generally, $m = n^p$, with $p \in (0,1)$) ensures that assumptions of the theorem are satisfied, i.e. that $\varepsilon_n \to 0$ and $\varepsilon_n r_n = \frac{r_n}{r_m} = \sqrt{\frac{n}{m}} \to \infty$ and allows to conclude consistency of this estimator for $f'_\mu(G)$. Note that consistency does not depend on the form of $f_\mu'(G)$, and hence holds under both $H_0$ and $H_a$.
		
		\item[(b)]

		We will check that the estimator $f'_n$ of the directional derivative map $f'_\mu$ given by $f'_n=f'_{\hat{\mu}_n}$ is uniformly consistent in the sense of Assumption 4 of \cite{Fang}. Note that under $H_0$, the estimator is given by
		\begin{equation*}
			\begin{aligned}
				f_{\hat{\mu}_n}':  h \to  & \max_{u} \sum_{i=1}^k \sqrt{a_i} \langle u_i, h_i \rangle \\
				& \sum_{i=1}^k u_i = 0, { } A'u \leq c\\
			\end{aligned}
		\end{equation*}
		The expression is independent of $\widehat{\mu}_n$, and hence the assumption is trivially satisfied. Thus, the proposed bootstrap is consistent by Theorem 3.2 of \cite{Fang}.  
	\end{itemize}
	\subsection{Details on the proof of Lemma \ref{lem:der_boot_compl}} \label{app:der_boot_comp} 
	Consider the linear program 
	$$\max_u \langle u, g\rangle \text{ s.t. } A'u \leq c \text{ and } Bu=0$$
	where $g \in \mathbb{R}^{kN}$ is a vector of objective coefficients, $\underset{N\times kN}{B}$ and $\underset{N^k\times kN}{A'}$ matrices with entries in $\{0,1 \}$, and $c$ a cost vector from primal MOT problem where the measures $\mu$ and support points in $\mathcal{X}$ are represented with $\log U$ bits of precision.
	
	Recall that a linear program over a polytope with exponentially many constraints can be solved in polynomial time by ellipsoid method if there exists a polynomial time separation oracle for the polytope (Theorem 8.5 of \cite{Bertsimas}). Here, we construct a separation oracle as follows. Given any $u \in \mathbb{R}^{kN}$, check if $N$ constraints $Bu=0$ are satisfied; if not, output a violated constraint (this is done with $\text{poly}(N)$ complexity). If $Bu=0$, check (and output a violated constraint if needed) in $A'u \leq c$ by employing a polynomial time oracle in Definition 10 of \cite{Altschuler}, which is done with $\text{poly}(N,k,\log U)$ complexity (Proposition 12 of \cite{Altschuler}).

	\section{Additional technical details and information on the data}\label{appB}
	\subsection{Additional technical details}\label{appB:Lip}
	
	\textbf{Locally Lipschitz property of $MOT$ functional.}
	Let $D=\bigotimes_{i=1}^k l^1(\mathcal{X})$ under $l^1$ distance $d$. 
	To establish the local Lipschitz property of the map $MOT: D \to \mathbb{R}$, we need to show that for every $\mu:=(\mu_1,\hdots,\mu_k) \in D$, there exists a constant $K > 0$ and a $\delta>0$ such for for all $\widetilde{\mu}:=(\widetilde{\mu}_1,\hdots,\widetilde{\mu}_k) \in D$ in a $\delta$-neighborhood of $\mu$, we have the Lipschitz behavior 
	\begin{equation*}
		\left|MOT(\mu) - MOT(\widetilde{\mu}) \right| \leq K \cdot d(\mu,\widetilde{\mu})
	\end{equation*}
	Note that the Lipschitz constant $K$ is allowed to depend on $\mu$.  
	
	Let $\nu$ and $\widetilde{\nu}$ denote the barycenters of collections $(\mu_1,\hdots, \mu_k)$ and $(\widetilde{\mu}_1,\hdots,\widetilde{\mu}_k)$, respectively. If $MOT(\mu) > MOT(\widetilde{\mu})$ (the other case will be treated similarly), we have that 
	\begin{eqnarray*}
		MOT(\mu) - MOT(\widetilde{\mu}) & = &  \frac{1}{k}  \sum_{i=1}^k W_2^2 (\mu_i,\nu) - 
		\frac{1}{k}\sum_{i=1}^k W_2^2(\widetilde{\mu_i},\widetilde{\nu})\\
		& \leq & \frac{1}{k}\sum_{i=1}^k    W_2^2(\mu_i,\widetilde{\nu}) -  \frac{1}{k}\sum_{i=1}^k W_2^2(\widetilde{\mu_i},\widetilde{\nu})   \\
	\end{eqnarray*}
	since $\widetilde{\nu}$ is suboptimal for the collection $(\mu_1,\hdots,\mu_k)$ and hence results in a larger objective value than an optimal barycenter $\nu$. Recall (see the proof of Theorem 4 in Supplementary Section C.1 of \cite{Sommerfeld}) that the map 
	\begin{equation*}
		\begin{aligned}
			& W_2^2(\cdot,\widetilde{\nu}): \xi \to W_2^2(\xi,\widetilde{\nu})
		\end{aligned}
	\end{equation*}
	is locally Lipschitz under the $l_1$ distance on its domain (which follows from bounding by total variation distance between the two discrete measures and observing its numerical equivalence with $l_1$ distance between the $N$-dimensional vectors representing these measures). The local Lipschitz constant obtained in \cite{Sommerfeld} depends on the argument $\xi$ via the diameter of the support of $\xi$. Using this result, we have that
	\begin{equation*}
		\begin{aligned}
			& MOT(\mu) - MOT(\widetilde{\mu})
			\leq \frac{1}{k}\sum_{i=1}^k    W_2^2(\mu_i,\widetilde{\nu}) - W_2^2(\widetilde{\mu_i},\widetilde{\nu})  \\ 
			& \leq \frac{1}{k} \sum_{i=1}^k K_i \cdot \|\mu_i - \widetilde{\mu}_i \|_{l_1} \leq \underbrace{\frac{1}{k} \max_i K_i}_{K} \cdot \underbrace{\sum_{i=1}^k \|\mu_i - \widetilde{\mu}_i \|_{l_1}}_{d(\mu,\widetilde{\mu})}
		\end{aligned}
	\end{equation*}
	The same bound holds for $MOT(\widetilde{\mu}) - MOT(\mu)$ 
	(proved the same way), and hence the map $MOT$ is indeed locally Lipschitz on $D$.

	\subsection{Information for \textit{SEER Tumor size} dataset}\label{appB:size}
	We consider the data from 2004 - 2015 on four cancer types: breast (female), prostate (male), lung (male), lung (female). For each cancer type, we wish to compare three measures $\mu_1,\mu_2,\mu_3$ corresponding to the tumor size distributions in following groups of patients: alive (or dead of causes other than the diagnosed cancer) at the end of the study with no distant metastasis at diagnosis, alive (or dead of causes other than the diagnosed cancer) at the end of the study with distant metastases at diagnosis, and dead at the end of the study (death is due to the diagnosed cancer). 
	
	Two classes of problems are considered: comparing distributions of tumor sizes $<1$ cm (measures supported on $\{1,2,\hdots,9 \}$ mm) and comparing distributions of the tumor sizes in the range $1 - 9 \text { cm}$ (measures supported on $\{1,2,\hdots,9 \}$ cm.) 
	
	The information from SEER used to download these data is given in Table 3. The sample sizes for each measure are given in Table 4. 
	
	\begin{table}[h!] \label{table:vars_sizes}
		\begin{center}
			\caption{Variables downloaded from SEER database to create \textbf{SEER Tumor size} dataset} 
			\resizebox{\textwidth}{!}{\begin{tabular}{||c |p{45mm} |p{45mm}||} 
					\hline
					\textbf{SEER*Stat variable name} & \textbf{variable indicates} & \textbf{codes values and their use}\\ [0.5ex] 
					\hline\hline
					SEER cause-specific death classification   & if alive (or dead from other cause)/dead (from this cancer) & \footnotesize{ - alive or dead of other cause} \newline \footnotesize{ - Dead (attributable to his cancer dx)}
					\\ 
					\hline
					CS mets at dx (2004 - 2015) & presence of distance mets &  - $00$: no distant mets \newline - (0,99): distant mets  \\
					\hline
					CS tumor size (2004 - 2015) & tumor size at diagnosis in mm\newline (the largest dimension) & use exact number of mm\newline (convert to cm and round to the nearest cm for $1 - 9$ cm tumors) \\[1ex]
					\hline
					
			\end{tabular}}
		\end{center}
	\end{table}
	
	\begin{table}[h!] \label{table:sampless_sizes}
		\begin{center}
			\caption{Sample sizes for \textbf{SEER Tumor size} dataset} 
			\begin{tabular}{||l ||c |c |c||} 
				\hline
				\textbf{Cancer type} & \textbf{alive, no mets} & \textbf{alive, mets} & \textbf{dead}\\ [0.5ex] 
				\hline\hline
				breast (female) $<1$ cm &  113,582  & 277 & 5,156 \\ 
				\hline
				prostate (male) $<1$ cm & 16,056 & group not included & 403 \\
				\hline
				lung (male) $<1$ cm &  1,513 & 187 &  1,905\\
				\hline
				lung (female) $<1$ cm & 2,735 & 231 &  2,178\\[1ex]
				\hline\hline
				breast (female) $1 - 9$ cm &  435,861 &  5,640 &  73,511\\ 
				\hline
				prostate (male) $1 - 9$ cm & 56,098 &  340 & 2,724 \\
				\hline
				lung (male) $1 - 9$ cm & 43,364 & 8,895 & 132,799 \\
				\hline
				lung (female) $1 - 9$ cm & 49,798 & 8,380 &  113,918\\[1ex]
				\hline
				
			\end{tabular}
		\end{center}
	\end{table}

	\subsection{Information for \textit{SEER Year of diagnosis} dataset}\label{appB:year}
	
	We consider the data on the lung and bronchus cancer in male and female in 2004 - 2020. For each sex, we wish to compare the bivariate distributions of patients' age and tumor sizes at diagnosis between four groups: patients diagnosed in $2004 - 2006$, $2009 - 2011$, $2014 - 2016$, and $2019 - 2020$. 
	\underline{Note:} Using the diagnosis years $2005$, $2010$, $2015$ and $2020$ produces similar results; additional years were included to increase sample sizes.
	
	The tumor size supports are restricted to $\{1, 3, 5 \}$ cm, which determine the boundaries between T1a/T1b, T1/T2, and T2/T3 in the TNM stage grouping of the non-small cell lung cancer\footnote{\url{https://www.cancer.org/cancer/types/lung-cancer/detection-diagnosis-staging/staging-nsclc.html}}. \normalsize The ages (given in 5 year increments by SEER) starting from 50 y.o. and ending at 80 y.o., which ensures enough data is available in each group. 
	
	The information from SEER used to download these data is given in Table 5. The sample sizes for each measure are given in Table 6.
	
	\begin{table}[h!] \label{table:vars_years}
		\begin{center}
			\caption{Variables downloaded from SEER database to create \textbf{SEER Tumor size} dataset} 
			\resizebox{\textwidth}{!}{\begin{tabular}{||l |p{45mm} |p{45mm}||} 
					\hline
					\textbf{SEER*Stat variable name} & \textbf{variable indicates} & \textbf{codes values and their use}\\ [0.5ex] 
					\hline\hline
					\hline
					sex & sex of a patient & select male or female from the drop-down menu \\
					\hline
					CS tumor size (2004 - 2015) & tumor size at diagnosis in mm\newline (the largest dimension)& use exact number of mm\newline (convert to cm and round to the nearest cm for $1 - 9$ cm tumors) \\
					\hline
					Age recode with $<1$ year olds & age at diagnosis in years& provided in 5-year intervals: \newline $<1, 1-4, \hdots, 85+ \text{years}$ \newline use lower bound\\[1ex]
					\hline
					
			\end{tabular}}
		\end{center}
	\end{table}
	
	\begin{table}[h!] \label{table:sampless_years}
		\begin{center}
			\caption{Sample sizes for \textbf{SEER Year of diagnosis} dataset} 
			\begin{tabular}{||l ||c |c |c |c ||} 
				\hline
				\textbf{Cancer type} & 2004 - 2006 & 2009 - 2011 & 2014 - 2016 & 2019 - 2020 \\ [0.5ex] 
				\hline\hline
				
				lung (male) & 7,971 & 8,668& 11,543& 10,439\\[1ex]
				\hline\hline
				
				lung (female) & 7,657& 9,106& 12,660 & 11,768\\[1ex]
				\hline
				
			\end{tabular}
		\end{center}
	\end{table}
\end{appendix}

\vspace{2 cm}
\begin{acks}[Acknowledgments]
	The author is grateful to the anonymous Referees, an Associate Editor and the Editor for their constructive comments that greatly improved the
	quality of this paper. The author is indebted to Ilmun Kim for sharing the codes for the tests used for comparisons in Figure \ref{fig:synth_3D}. The author sincerely thanks Adriana Dawes for the advice and support, Florian Gunsilius for the insightful comments, and Marc Hallin for suggesting relevant references. Helpful discussions with Helen Chamberlin, Jun Kitagawa, Facundo M{\'e}moli, Dustin Mixon, and Yulong Xing are gratefully acknowledged. The efforts of SEER Program in creation and maintenance of the SEER database are gratefully acknowledged. The author is solely responsible for all the mistakes. 
\end{acks}

\begin{funding}
	The author was supported by the National Institute of General Medical Science of the National Institutes of Health under award number R01GM132651 to Adriana Dawes.
\end{funding}

\bibliographystyle{imsart-number} 
\bibliography{reference.bib}       

%
%
%
%

\end{document}